\newtheorem{theorem}{Theorem}[section]
\newtheorem{proposition}[theorem]{Proposition}
\newtheorem{lemma}[theorem]{Lemma}
\newtheorem*{hypothesis*}{Hypothesis}
\theoremstyle{definition}
\newtheorem{definition}[theorem]{Definition}
\newtheorem{remark}[theorem]{Remark}
\newtheorem{remarks}[theorem]{Remarks}
\newcommand\blfootnote[1]{%
	\begingroup
	\renewcommand\thefootnote{}\footnote{#1}%
	\addtocounter{footnote}{-1}%
	\endgroup
}
\newcommand{\Z}{\mathbf Z}
\newcommand{\F}{\mathbf F}
\newcommand{\C}{\mathbf C}
\newcommand{\Stab}{\mathrm{Stab}}
\newcommand{\Hol}{\mathrm{Hol}}
\newcommand{\Hom}{\mathrm{Hom}}
\newcommand{\Ker}{\operatorname{Ker}}
\newcommand{\GL}{\mathrm{GL}}
\newcommand{\Aut}{\operatorname{Aut}}
\newcommand{\Id}{\operatorname{Id}}
\newcommand{\Dic}{\operatorname{Dic}}
\providecommand{\keywords}[1]{\textbf{\textit{Keywords---}} #1}
\providecommand{\MSC}[1]{\textbf{\textit{MSC---}} #1}
\title{Determining skew left braces of size $np$}
\author{Teresa Crespo$^{(1)}$, Daniel Gil-Mu\~noz$^{(2,3,4)}$, Anna Rio$^{(5)}$ \\ and Montserrat Vela$^{(5)}$}
\date{}
\begin{document}
	
	\maketitle	
	
	\footnotesize
	
	\noindent
	(1) Departament de Matem\`atiques i Inform\`atica, Universitat de Barcelona, Gran Via de les Corts Catalanes 585, 08007, Barcelona (Spain)
	
	\noindent
	(2) Charles University, Faculty of Mathematics and Physics, Department of Algebra, Sokolovska 83, 18600 Praha 8, Czech Republic
	
	\noindent
	(3) Institut de Matemàtica, Universitat de Barcelona, Gran Via de les Corts Catalanes, 585, 08007 Barcelona, Spain

    \noindent
    (4) Dipartimento di Matematica, Università di Pisa, Largo B. Pontecorvo, 5, 56127 Pisa, Italy
    
	\noindent
	(5) Departament de Matem\`atiques, Universitat Polit\`ecnica de Catalunya, Edifici Omega, Jordi Girona, 1-3, 08034, Barcelona (Spain)
	
	\begin{abstract}
		We define the twofold semidirect product of two skew left braces, in which both the additive and multiplicative groups are semidirect products of the corresponding groups of the given skew left braces. We consider an odd prime $p$ and an integer $n$ satisfying $p\nmid n$, $p\nmid|\mathrm{Aut}(E)|$ for every group $E$ of order $n$ and such that each group of order $np$ has a unique $p$-Sylow subgroup. Under these conditions, we prove that any skew left brace of size $np$ is either a twofold semidirect product of the trivial brace of size $p$ and a skew left brace of size $n$ or a companion skew left brace of that one. We develop an algorithm to obtain all skew left braces of size $np$ from the skew left braces of size $n$ and provide a formula to count them. We use this result to describe all skew left braces of size $12p$ for $p\geq 7$, which proves a conjecture of V.G. Bardakov, M.V. Neshchadim and M.K. Yadav.
	\end{abstract}
	
	\MSC{Primary: 20B35, 20D20, 20B05. Secondary: 81R50, 16T25, 12F10}
	
	\keywords{Skew brace, semidirect product, holomorph}
	
	\blfootnote{The first author was supported
		by grant PID2019-107297GB-I00 (Ministerio de Ciencia, Innovación y Universidades).
		The second author was supported by Czech Science Foundation, grants 21-00420M and 24-11088O, and by Charles University Research Centre program UNCE/SCI/022. The second, third and fourth authors were supported by the grant PID2022-136944NB-I00 (Ministerio de Ciencia e Innovación). The first, third and fourth autors were partially supported by grant 2021-SGR-01468 (Generalitat de Catalunya).
		
		Email addresses: teresa.crespo@ub.edu, daniel.gil-munoz@mff.cuni.cz, ana.rio@upc.edu, montse.vela@upc.edu }

	\section{Introduction}
	In \cite{R07} Rump introduced an algebraic structure called (left) brace to study
	set-theoretic solutions of the Yang-Baxter equation. Guarnieri and Vendramin introduced in \cite{GV} the more general concept of skew (left) brace. A skew left brace is a 
    triple  $(B,\cdot,\circ)$, where  $(B,\cdot)$ and  $(B,\circ)$  are groups and the following compatibility condition holds:
    for all $a, b, c \in B$,
	$$a\circ(b\cdot c) = (a\circ b)\cdot a^{-1} \cdot (a\circ c),$$
	 where $a^{-1}$ denotes the inverse of $a$ in $(G,\cdot)$. We call $(B, \cdot$) the additive group and $(B, \circ)$ the multiplicative group of the skew left brace. We say that $B$ is a left brace if its additive group is abelian. The \textit{size} of a skew left brace is the cardinality of the underlying set $B$. If $(B,\cdot)$ is a group, then $(B,\cdot,\cdot)$ is a skew left brace, called the \textit{trivial skew left brace}. Note that any skew left brace of prime size is necessarily trivial.

Consider two skew braces $(B_1,\cdot,\circ)$ and $(B_2,\cdot,\circ)$. A \textit{skew left brace homomorphism} is a map $f : B_1 \to B_2$  such that $f(x\cdot y) = f(x)\cdot f(y)$ and $f(x\circ y) = f(x)\circ f(y)$ for all $x,y \in B_1$. \textit{Skew left brace isomorphisms} and \textit{automorphisms} are defined accordingly. If there is a skew left brace isomorphism between them, we say that the skew left braces $B_1$ and $B_2$ are isomorphic. We denote by $\Aut(B,\cdot,\circ)$  the group of skew left brace automorphisms of $(B,\cdot,\circ)$.

If $(B,\cdot,\circ)$ is a skew left brace, the multiplicative group acts on the additive group by
automorphisms since for each $a \in B$, the map
	$$\begin{array}{llll} \lambda_a: &B &\rightarrow &B
	\\ & b& \mapsto & a^{-1}\cdot (a\circ b) \end{array}$$
	is an automorphism of $(B,\cdot)$ and  the map  
    $\lambda: (B,\circ)\to \Aut(B,\cdot)$ given by $\lambda(a)=\lambda_a$ is a group homomorphism. The following formulas hold:  $a\circ b=a \cdot\lambda_a(b)$, $a\cdot b=a \circ\lambda_a^{-1}(b)$. An \textit{ideal} of a skew left brace $(B,\cdot,\circ)$ is $I\subseteq B$ such that $(I,\circ)$ is a normal subgroup of $(B,\circ)$, $(I,\cdot)$ is a normal subgroup of $(B,\cdot)$ and $\lambda_a(I)\subseteq I$ for all $a\in B$.
    One can quotient out skew left braces by ideals to produce new skew left braces. From the definition, it follows directly that 
    $I\cdot a= a\cdot I= a\circ I= I\circ a$
for any $a \in B$. This implies that the additive and multiplicative group structure on $B/I$ are
well-defined. The brace condition  holds in $B/I$ because it holds in $B$.

    Due to the results in \cite{CR09}, \cite{B16} and \cite{GV},
	the determination of skew left braces of a given size can be translated to group theory, since there is a bijective correspondence between skew left braces structures defined in a group  $(N,\cdot)$ and regular subgroups of $\Hol(N,\cdot)$.

    Following the establishment of this connection, a number of works have considered the problem of determining the skew braces of a specific size, for instance square-free \cite{AB21} or $p^2q$ for $p,q$ primes \cite{AB22a,AB22b,CCD20,CCD24}.
	In \cite{CGRV1}, we classified left braces of size $8p$, for $p$ a prime number, $p \neq 3,7$.
	In \cite{CGRV2}, we gave a method to determine all left braces of size $np$ from the braces of size $n$ and certain classes of group homomorphisms from the multiplicative group of these left braces of size $n$ to $\Z_p^*$. For a prime number $p\geq 7$, we applied the obtained results to classify left braces of size $12p$. In particular we proved the Conjectures 4.2 and 4.3 in \cite{BNY} on the numbers $b(8p), b(12p)$ of isomorphism classes of left braces of size $8p$ and $12p$, respectively.

    The dependence of left braces of size $np$ on left braces of size $n$ mentioned above can be precised by the notion of semidirect product of left braces. This was introduced in \cite{R08} for algebraic structures called cycloids, from which braces can be derived, and was reformulated for left braces in \cite{Cedo,CJO}. Similarly, in \cite{CCS16} the semidirect product for right braces was introduced. Bachiller \cite{B18} introduced the matched product for left braces, and was generalized to left semi-braces in \cite{CCS20}. In all these constructions (except cycloids), the direct product of the original additive groups is carried out, while some variant is introduced to define the product of the multiplicative structures. In \cite{CMS22}, a notion of product is introduced for left inverse semi-braces consisting in non-trivial products for additive and multiplicative structures, and was consequently called the double semidirect product. In group theory, the concept of a semidirect product coincides with that of a split extension. As we have mentioned, as a way to produce new constructions, in the literature different theories of extensions of braces have been considered. This has been generalized in \cite{RY24}
    to a theory of extensions of skew left braces by abelian groups. 
    
	Our aim in this paper is to generalize the method obtained in \cite{CGRV2} to skew left braces of size $np$, where $p$ is a prime number and $n$ an integer not divisible by $p$ and such that every group of order $np$ has a normal subgroup of order $p$. To do so, we use extensions of skew braces of size $n$ by the trivial brace $\Z_p$. One of them is obtained by introducing a new notion of semidirect product of two skew left braces, called the twofold semidirect product, which is a skew left brace whose additive and multiplicative groups are semidirect products of the original ones (see Definition \ref{twofoldsemidprod}). With this definition, we will prove that a skew left brace of size $np$ is either a twofold semidirect product of the trivial left brace of size $p$ and a skew left brace of size $n$ or a \emph{companion} brace of this one. Given a skew left brace $(B_n,\cdot,\circ)$ of size $n$, we will show that each pair $(\sigma,\tau)$, with $\sigma\in\Hom(B_n,\Z_p^*)$, $\tau\in\Hom((B_n,\circ),\Z_p^*)$ give rise to two skew left braces of size $np$. The number of skew left braces of size $np$ will be given in terms of the number of orbits under actions involving the morphisms $\sigma$ and $\tau$.

	We shall apply our method to classify skew left braces of size $12 p$, for $p$ a prime number, $p\geq 7$. In particular we will prove the following conjecture in \cite{BNY} on the number $s(12p)$ of isomorphism classes of skew left braces of size $12p$, $p\geq 7$.
	
	\begin{equation}\label{conjec}	
	s(12p)= \left\{ \begin{array}{ll} 324 & \text{\ if } p\equiv 11 \pmod{12}, \\
	410 & \text{\ if } p\equiv 5 \pmod{12}, \\
	606 & \text{\ if } p\equiv 7 \pmod{12}, \\
	782 & \text{\ if } p\equiv 1 \pmod{12}.
	\end{array} \right.
	\end{equation}
	
	\noindent
	We note that $s(24)=855, s(36)=400$ and $s(60)=418$ (see \cite[p. 21]{V}).
	
	\section{Skew braces of size $np$}\label{method}

    Throughout this paper, we will refer to skew left braces (resp. left braces) just as skew braces (resp. braces).

\subsection{Twofold semidirect products of skew braces}
	
	We recall the definition of direct and semidirect product of skew braces as one can see, for instance, in \cite{Cedo} and \cite{SV}. Let $A$ and $B$ be skew braces. Then $A \times B$ together with
	$$
	(a,b)\cdot(a',b')=(a\cdot a',b\cdot b'),\quad (a,b)\circ (a',b')=(a\circ a',b\circ b')
	$$
	is a skew brace called the \textit{direct product of the skew braces $A$ and $B$}. Now, let $\tau:(B,\circ)\to\Aut(A,\cdot,\circ)$
	be a group homomorphism.
	Consider in $A\times B$
	the additive structure of the direct product $(A,\cdot)\times (B,\cdot)$ and the multiplicative structure of the semidirect product $(A,\circ)\rtimes_{\tau} (B,\circ)$:
	$$
	(a,b)\cdot(a',b')=(a\cdot a',b\cdot b'),\quad 
	(a,b)\circ(a',b')=(a\circ\tau(b)(a'),b\circ b').
	$$
	Then, we obtain a skew brace,  which is called the \textit{semidirect product of the skew braces $A$ and $B$ via $\tau$}.

	We observe in particular that the additive group of the semidirect product of two braces is the direct product of the corresponding additive groups. In \cite{CGRV2} we proved that a brace of size $np$, where $p$ is a prime number and $n$ an integer not divisible by $p$ and such that every group of order $np$ has a normal subgroup of order $p$, is a direct or semidirect product of the trivial brace of size $p$ and a brace of size $n$. In this case, the additive group was a direct product of $\Z_p$ and an abelian group of order $n$. We are now interested in skew braces of size $np$, for which both additive and multiplicative groups may be strictly semidirect products of $\Z_p$ and a group of order $n$. We want then to determine the relationship between these skew braces of size $np$ and skew braces of size $n$. To this end we shall define the twofold semidirect product of two skew braces.
	
	\begin{proposition}\label{dsdp}
		Let $A$ and $B$ be skew braces and let
		$$\sigma:(B,\cdot)\to\Aut(A,\cdot,\circ), \quad \tau:(B,\circ)\to\Aut(A,\cdot,\circ)$$
		be group homomorphisms. Consider in $A\times B$
		the additive structure of the semidirect product 
        $(A,\cdot)\rtimes_{\sigma} (B,\cdot)$
        and the multiplicative structure of the semidirect product
        $(A,\circ)\rtimes_{\tau} (B,\circ)$
		$$
		(a,b)\cdot(a',b')=(a\cdot\sigma(b)(a'),b\cdot b'),\quad 
		(a,b)\circ(a',b')=(a\circ\tau(b)(a'),b\circ b').
		$$
		We assume that the following equality holds for any $a\in A$ and all $b_1, b_2 \in B$:
		\begin{equation}\label{eqdsdp}
		\sigma((b_1\circ b_2)\cdot b_1^{-1})\lambda_a=\lambda_a\tau(b_1) \sigma(b_2) \tau(b_1)^{-1}
		\end{equation} 
		Then $(A\times B, \cdot,\circ)$ is a skew brace.
	\end{proposition}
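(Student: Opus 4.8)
The plan is to verify directly the three defining conditions of a skew left brace for the pair of operations on $A\times B$: that $(A\times B,\cdot)$ is a group, that $(A\times B,\circ)$ is a group, and that the brace relation holds. The first two are immediate: $(A\times B,\cdot)$ is the semidirect product $(A,\cdot)\rtimes_\sigma(B,\cdot)$ and $(A\times B,\circ)$ is the semidirect product $(A,\circ)\rtimes_\tau(B,\circ)$, which are genuine groups precisely because $\sigma$ is a homomorphism $(B,\cdot)\to\Aut(A,\cdot)$ and $\tau$ is a homomorphism $(B,\circ)\to\Aut(A,\circ)$ (the ``$\circ$-component'' of $\Aut(A,\cdot,\circ)$). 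So the whole content is the brace relation
$$
(a,b)\circ\bigl((a',b')\cdot(a'',b'')\bigr)=\bigl((a,b)\circ(a',b')\bigr)\cdot(a,b)^{-1}\cdot\bigl((a,b)\circ(a'',b'')\bigr),
$$
where the inverse is taken in $(A\times B,\cdot)$.

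Next I would reduce the brace relation to two scalar identities by comparing the two components. The second ($B$-)component reduces instantly to the brace relation in $B$ together with the fact that $\tau(b)$ fixes nothing relevant there, so it holds automatically. The first ($A$-)component is where the hypothesis \eqref{eqdsdp} must enter. Expanding the left side one gets $a\circ\tau(b)\bigl(a'\cdot\sigma(b')(a'')\bigr)$, and since $\tau(b)\in\Aut(A,\cdot,\circ)$ is in particular a $\cdot$-automorphism, this is $a\circ\bigl(\tau(b)(a')\cdot(\tau(b)\sigma(b')\tau(\overline b))(\tau(b)(a''))\bigr)$ — here I insert $\tau(\overline b)\tau(b)=\mathrm{id}$ to prepare for the hypothesis. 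Now apply the brace relation in $A$ to $a\circ(x\cdot y)$ with $x=\tau(b)(a')$. On the right side, the $\cdot$-inverse of $(a,b)$ in the semidirect product is $(\sigma(b^{-1})(a^{-1}),b^{-1})$, and expanding $\bigl((a,b)\circ(a',b')\bigr)\cdot(a,b)^{-1}\cdot\bigl((a,b)\circ(a'',b'')\bigr)$ produces, in the first component, a term of the shape $a\circ\tau(b)(a')\cdot a^{-1}\cdot \sigma(b\circ b'\cdot b^{-1})\bigl(\text{stuff}\bigr)$. Matching the two sides, after using $a\circ z = a\cdot\lambda_a(z)$ and $a^{-1}\cdot a\circ z=\lambda_a(z)$ to pass freely between $\circ$ and $\cdot$, the discrepancy is exactly the conjugated automorphism $\sigma(b\circ b'\cdot b^{-1})$ versus $\lambda_a\,\tau(b)\,\sigma(b')\,\tau(\overline b)\,\lambda_a^{-1}$ applied to the appropriate element; rearranging gives precisely \eqref{eqdsdp} with $b_1=b$, $b_2=b'$.

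The main obstacle — really the only nontrivial bookkeeping — is keeping straight which operation a given product is taken in while expanding the right-hand side of the brace relation: the three factors are multiplied with $\cdot$, but each factor is itself a $\circ$-product in the first component (coming from $(a,b)\circ(a',b')$) twisted by a $\sigma$ coming from the $\cdot$-semidirect structure. I would handle this by systematically converting every $\circ$ to $\cdot$ via $x\circ y=x\cdot\lambda_x(y)$ at the start, carrying the $\lambda$'s along, and only at the end recognizing that the leftover relation between the $\sigma$'s, $\tau$'s and $\lambda$'s is \eqref{eqdsdp}. One should also double-check the degenerate instances $b'=1$ or $b''=1$ as sanity checks, and note that taking $\sigma$ trivial recovers the ordinary semidirect product of braces recalled just above, while taking $B$ trivial recovers $A$; condition \eqref{eqdsdp} is visibly satisfied in both of those cases, which is reassuring.
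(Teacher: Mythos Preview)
Your proposal is correct and follows exactly the paper's approach: verify the brace relation directly, note that the $B$-component is just the brace relation in $B$, and reduce the $A$-component equality to \eqref{eqdsdp} by expanding both sides, using that $\tau(b)$ is a $\cdot$-automorphism and the brace relation in $A$, and rewriting via $\lambda_a$. One small slip to watch when you write it out: in your sketch of the right-hand side you place a bare $a^{-1}$ between $a\circ\tau(b)(a')$ and the $\sigma$-factor, but in fact that $a^{-1}$ already sits \emph{inside} the automorphism, since $\sigma(b\circ b')\bigl(\sigma(b^{-1})(a^{-1})\bigr)=\sigma(b\circ b'\cdot b^{-1})(a^{-1})$; this is precisely what then combines with $\sigma(b\circ b'\cdot b^{-1})(a\circ\tau(b)(a''))$ to produce $\sigma(b\circ b'\cdot b^{-1})\bigl(\lambda_a(\tau(b)(a''))\bigr)$, matching your stated conclusion.
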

	
	\begin{proof}
		We check the skew brace condition for $(A\times B, \cdot,\circ)$, namely the equality
		$$(a_1,b_1)\circ \left( (a_2,b_2) \cdot (a_3,b_3) \right) = \left( (a_1,b_1) \circ (a_2,b_2) \right) \cdot (a_1,b_1)^{-1} \cdot \left( (a_1,b_1)\circ (a_3,b_3) \right)$$
		for $(a_1,b_1), (a_2,b_2), (a_3,b_3) \in A\times B$. The term on the left is
		$$ (a_1,b_1)\circ (a_2\cdot \sigma(b_2)(a_3),b_2\cdot b_3) \\
		= (a_1 \circ \tau(b_1)(a_2\cdot \sigma(b_2)(a_3)),b_1 \circ ( b_2\cdot b_3))$$
		and the term on the right is
		$$\begin{array}{l} 
		(a_1 \circ \tau(b_1)(a_2),b_1\circ b_2) \cdot (\sigma(b_1^{-1})(a_1^{-1}),b_1^{-1})\cdot(a_1 \circ \tau(b_1)(a_3),b_1\circ b_3)\\
		=(a_1 \circ \tau(b_1)(a_2)\cdot \sigma(b_1\circ b_2)(\sigma(b_1^{-1})(a_1^{-1})),b_1\circ b_2 \cdot b_1^{-1})\cdot(a_1 \circ \tau(b_1)(a_3),b_1\circ b_3)\\
		=(a_1 \circ \tau(b_1)(a_2)\cdot \sigma(b_1\circ b_2 \cdot b_1^{-1})(a_1^{-1})\cdot \sigma(b_1\circ b_2 \cdot b_1^{-1})(a_1 \circ \tau(b_1)(a_3)),b_1\circ b_2 \cdot b_1^{-1} \cdot b_1\circ b_3)
		\end{array}
		$$
		Since $B$ is a skew brace, the two second components are equal. Now, the first component of the term on the left is
		$$\begin{array}{l} a_1 \circ \tau(b_1)(a_2\cdot \sigma(b_2)(a_3))=a_1 \circ \left(\tau(b_1)(a_2) \cdot \tau(b_1)(\sigma(b_2)(a_3)\right)\\ 
		=a_1 \circ \tau(b_1)(a_2)\cdot a_1^{-1} \cdot a_1  \circ \tau(b_1)(\sigma(b_2)(a_3))=
		a_1 \circ \tau(b_1)(a_2)\cdot \lambda_{a_1}(\tau(b_1)(\sigma(b_2)(a_3)))
		\end{array}
		$$
		where the second equality follows from the brace condition for $A$. The first component of the right term is
		$$\begin{array}{l} a_1 \circ \tau(b_1)(a_2)\cdot \sigma(b_1\circ b_2 \cdot b_1^{-1})(a_1^{-1})\cdot \sigma(b_1\circ b_2 \cdot b_1^{-1})(a_1 \circ \tau(b_1)(a_3)) \\ = a_1 \circ \tau(b_1)(a_2)\cdot \sigma(b_1\circ b_2 \cdot b_1^{-1})(a_1^{-1}\cdot a_1 \circ \tau(b_1)(a_3))
		\\= a_1 \circ \tau(b_1)(a_2)\cdot \sigma(b_1\circ b_2 \cdot b_1^{-1})(\lambda_{a_1}( \tau(b_1)(a_3))).
		\end{array}
		$$
		The equality of the first components is then equivalent to
		$$\lambda_{a_1}\tau(b_1)\sigma(b_2)(a_3)=\sigma(b_1\circ b_2 \cdot b_1^{-1})\lambda_{a_1}\tau(b_1)(a_3)$$ for all $a_3\in A$, hence to equality \eqref{eqdsdp}.
	\end{proof}

A general theory for arbitrary extensions of skew braces should provide a common frame for the compatibility conditions among actions which allow to define a new skew brace structure.
    
	\begin{definition}\label{twofoldsemidprod}
		Let $\sigma$ and $\tau$ be group homomorphisms as in Proposition \ref{dsdp}. We say that $(\sigma,\tau)$ is a \textit{good pair} if the compatibility condition \eqref{eqdsdp} is satisfied. If $(\sigma,\tau)$ is a good pair,  we call the {\it twofold semidirect product} of the skew braces $A$ and $B$ via $\sigma$ and $\tau$ the skew brace $A\rtimes_{\sigma}^{\tau}B\coloneqq(A\times B, \cdot,\circ)$ defined in Proposition \ref{dsdp}.
	\end{definition}
	
	\begin{remarks}
		\begin{enumerate}
			\item If $\sigma$ is the trivial homomorphism, $\sigma(b)=\Id_A$ for all $b$, then 
            $(\sigma, \tau)$ is a good pair for all $\tau$   and we recover the notion of semidirect product of skew braces.
			\item If $A$ is a trivial brace, then $\lambda_a=\Id_A$ for all $a$. In that case, if  $\Aut(A)$ is abelian, then \eqref{eqdsdp} is equivalent to $\sigma(b_1\circ b_2)=\sigma(b_1)\sigma(b_2)$ for all $b_1,b_2 \in B$, namely to $\sigma$ being also group homomorphism with respect to $\circ$, that is, to $\sigma$ being a skew brace homomorphism.
		\end{enumerate}
		
	\end{remarks}
	
	\begin{lemma}\label{rm}If $(B,\cdot,\circ)$ is a skew brace, $\sigma:(B,\cdot,\circ)\to \Z_p^*$ is a skew brace homomorphism and $\tau:(B,\circ)\to \Z_p^*$ is a group homomorphism, then 
		in $\Z_p\times B$ we can define two non-isomorphic skew brace structures with the same additive group with operation
		$$
		(m,a)\cdot (n,b)=(m+\sigma(a)n, a\cdot b)
		$$
		and multiplicative group with respective operations
		\begin{equation}\label{compa}
		(m,a)\circ (n,b)=
		\left\{
		\begin{matrix}
		(m+\tau(a)n,\ a\circ b)\\
		(\sigma(b)m+\tau(a)\sigma(a)n,\ a\circ b)\\
		\end{matrix}
		\right.
		\end{equation}
	\end{lemma}
	
	The first one corresponds to the twofold semidirect product defined above 
	 and it is a straightforward computation to check the brace condition for the second multiplicative structure.  
    Alternatively, one can consider the good triplets $(\tau,\sigma,\tau^{-1})$ and $(\tau,\sigma,\sigma\tau^{-1})$, respectively, and the pair $(0,0)\in C_N^2$ to apply \cite[Theorem 3.3]{RY24}.
    In both cases, the additive group is $\Z_p\rtimes_{\sigma}(B,\cdot)$ and the multiplicative group is isomorphic to $\Z_p\rtimes_{\tau}(B,\circ)$. If $x=(m,a)$ and $y=(n,b)$, we have 
	$$
	\lambda_x(y)=x^{-1}\cdot (x\circ y)=
	(-\sigma(a^{-1})m,a^{-1})\cdot
	((m,a)\circ (n,b))=\left\{
	\begin{matrix}
	(\sigma(a^{-1})\tau(a)n,\ \lambda_a(b))\\
	(\tau(a)n-\sigma(a^{-1})m+\sigma(a^{-1})\sigma(b)m,\ \lambda_a(b))\\
	\end{matrix}
	\right.
	$$
	
\subsection{Describing skew braces of size $np$}

        From now on we fix the following assumption, which will be maintained along all the paper: $p$ is an odd  prime and $n$ an integer such that $p$ does not divide $n$ and each group of order $np$ has a normal subgroup of order $p$. In Sections \ref{twelve} and \ref{braces12p} we consider $n=12$ and $p\geq 7$, which satisfy this conditions.
	
	Our aim is to show that we can determine all skew braces of 
	size $np$ from skew braces of size $n$ as the ones described in Lemma \ref{rm}. We will do it taking into account 
	the correspondence between braces and regular subgroups of
	the holomorph. We note that, if $G$ is a regular subgroup of $\Hol(A,\cdot)=A\rtimes \Aut(A,\cdot)$, then for each $a\in G$, $(a,f)\to a$ gives a bijection between $G$ and $A$.
	
	\begin{proposition}[\cite{GV} Theorem 4.2]\label{GV}
		Let $(A,\cdot,\circ)$ be a skew brace. Then
		$\{ (a,\lambda_a) \, : \, a \in A \}$
		is a regular subgroup of $\Hol(A,\cdot)$, isomorphic to $(A,\circ)$. Conversely, if $(A,\cdot)$ is a group and $G$ is a regular subgroup of $\Hol(A,\cdot)$, then $(A,\cdot,\circ)$ is a skew brace with $(A,\circ) \simeq G$, where
		$a \circ b=a\cdot f(b),$ and $({\pi_1}_{|G})^{-1}(a)=(a,f) \in G$.
		
		These assignments give a bijective correspondence between the set of isomorphism classes of skew braces $(A,\cdot,\circ)$ and the set of conjugation classes of regular subgroups of $\Hol(A,\cdot)$.
	\end{proposition}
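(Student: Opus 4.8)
Although the statement is quoted from \cite{GV}, I sketch the argument one would give. Identify $\Hol(A,\cdot)=A\rtimes\Aut(A,\cdot)$ with product $(a,f)(b,g)=(a\cdot f(b),\,f\circ g)$ and action $(a,f)\cdot x=a\cdot f(x)$ on $A$; since $(a,f)\cdot e=a$ for the identity $e$ of $(A,\cdot)$, a subgroup $G\le\Hol(A,\cdot)$ is regular precisely when $\pi_1|_G$ is bijective, the fact recalled just before the statement. The plan is to treat the three assertions in turn: the regular subgroup attached to a skew brace, the inverse construction, and the class-level bijection.

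For the forward direction I would first check that each $\lambda_a$ lies in $\Aut(A,\cdot)$ — multiplicativity $\lambda_a(b\cdot c)=a^{-1}\cdot(a\circ b)\cdot a^{-1}\cdot(a\circ c)=\lambda_a(b)\cdot\lambda_a(c)$ is just the brace relation, and bijectivity is witnessed by the inverse already displayed in the text — and then establish $\lambda_{a\circ b}=\lambda_a\circ\lambda_b$. For the latter, the brace relation with $c$ replaced by $b^{-1}$ gives $(a\circ b^{-1})\cdot a^{-1}=a\cdot(a\circ b)^{-1}$; inserting this into $\lambda_a(\lambda_b(c))=a^{-1}\cdot\bigl(a\circ(b^{-1}\cdot(b\circ c))\bigr)$ and expanding once more by the brace relation collapses it to $(a\circ b)^{-1}\cdot\bigl((a\circ b)\circ c\bigr)=\lambda_{a\circ b}(c)$. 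Hence $(a,\lambda_a)(b,\lambda_b)=(a\cdot\lambda_a(b),\,\lambda_a\lambda_b)=(a\circ b,\,\lambda_{a\circ b})$, so $\{(a,\lambda_a):a\in A\}$ is a subgroup onto which $a\mapsto(a,\lambda_a)$ is a group isomorphism from $(A,\circ)$, and it is regular since $\pi_1$ restricts to $a\mapsto a$.

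For the converse, regularity of $G$ provides for each $a\in A$ a unique $f_a\in\Aut(A,\cdot)$ with $(a,f_a)\in G$; put $a\circ b:=a\cdot f_a(b)$. From $(a,f_a)(b,f_b)=(a\circ b,\,f_af_b)\in G$ and uniqueness one reads off $f_{a\circ b}=f_af_b$, so $a\mapsto(a,f_a)$ transports the group law of $G$ to $\circ$, making $(A,\circ)$ a group isomorphic to $G$; the brace relation is then immediate since $f_a\in\Aut(A,\cdot)$, both $a\circ(b\cdot c)$ and $(a\circ b)\cdot a^{-1}\cdot(a\circ c)$ being equal to $a\cdot f_a(b)\cdot f_a(c)$. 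As $\lambda_a(b)=a^{-1}\cdot(a\circ b)=f_a(b)$, applying one construction and then the other returns the original datum in both directions, so the two assignments are mutually inverse on objects.

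Finally, for the class-level bijection I would show that a skew brace isomorphism $\psi\colon(A,\cdot,\circ)\to(A,\cdot,\circ')$, which is in particular an element of $\Aut(A,\cdot)$, corresponds to conjugation by $(e,\psi)$: one computes $(e,\psi)(a,\lambda_a)(e,\psi)^{-1}=(\psi(a),\,\psi\lambda_a\psi^{-1})$, and comparing with the regular subgroup $\{(x,\lambda'_x)\}$ of $\circ'$ shows that this conjugation carries one subgroup to the other exactly when $\psi\lambda_a\psi^{-1}=\lambda'_{\psi(a)}$ for all $a$, which by $x\circ y=x\cdot\lambda_x(y)$ is precisely the identity $\psi(a\circ b)=\psi(a)\circ'\psi(b)$. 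For the converse inclusion one needs that $\Hol(A,\cdot)$-conjugacy of regular subgroups coincides with $\Aut(A,\cdot)$-conjugacy: if $(c,\phi)$ conjugates $G$ to $G'$, then the unique $(c,\mu)\in G'$ satisfies $(c,\mu)^{-1}(c,\phi)=(e,\mu^{-1}\phi)\in\Aut(A,\cdot)$, and this element still conjugates $G$ to $G'$ because $(c,\mu)$ normalises $G'$; one may thus assume the conjugator lies in $\Aut(A,\cdot)$ and invoke the previous computation in reverse. Together with the object-level statement this yields the claimed correspondence. I expect this last part — the reduction to $\Aut(A,\cdot)$-conjugacy and the matching of $\psi\lambda_a\psi^{-1}=\lambda'_{\psi(a)}$ with the morphism condition — to be the only genuinely delicate point; the rest is a routine unwinding of the holomorph multiplication.
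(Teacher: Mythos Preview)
Your argument is correct and is essentially the standard proof from \cite{GV}: the verification that $\lambda_a\in\Aut(A,\cdot)$ and $\lambda_{a\circ b}=\lambda_a\lambda_b$, the inverse construction via $a\circ b=a\cdot f_a(b)$, and the reduction of $\Hol$-conjugacy to $\Aut(A,\cdot)$-conjugacy are all handled cleanly. Note, however, that the present paper does not give its own proof of this proposition --- it is quoted from Guarnieri--Vendramin (Theorem~4.2) and used as a black box --- so there is no in-paper argument to compare against; your sketch is what one finds in the original reference.
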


	For a skew brace of size $np$ we denote $N$ its additive group and $G$ its multiplicative group. Therefore, we will work with a group $(N,\cdot)$ and a regular subgroup $G$
    of $\Hol(N)$. Since \textit{Schur-Zassenhaus theorem} states that the groups of order $np$ are semidirect products, we have $N=\Z_p\rtimes_\sigma E$, with $E$
	a group of order $n$ and $\sigma:E\to \Z_p^*$ a group homomorphism. 
	Following \cite{curran}, the group automorphisms of $N$ can be described as matrices
	$$
	\begin{pmatrix}
	\alpha & \gamma\\
	0&\lambda
	\end{pmatrix}
	\mbox{ 
		with }\alpha\in \Z_p^*=\Aut(\Z_p),\ 
	\lambda\in \Aut(E)\mbox{ such that }\sigma\lambda=\sigma
	\mbox{ and }\gamma:E\to\Z_p \mbox{ a }1-\mbox{cocycle}$$
	and the action on $N$ is given by
	$$\begin{pmatrix}
	\alpha & \gamma\\
	0&\lambda
	\end{pmatrix}
	\begin{pmatrix}
	m\\
	a
	\end{pmatrix}=
	\begin{pmatrix}
	\alpha m+\gamma(a)\\
	\lambda(a)
	\end{pmatrix}.
	$$
	Under our assumption, the subgroups of order $n$ of $N$ are  conjugates of $E$ by elements in $\Z_p$ and this
    amounts to $H^1(E,\Z_p)=0$, so we are left with the coboundaries
	$$
	\gamma_i(a)=i-\sigma(a)i,\quad  i\in\Z_p.
	$$
	For a trivial $\sigma$, we get the direct product $\Aut(\Z_p)\times \Aut(E)$, and for a non trivial $\sigma$ the group $\Aut(N)$ has order $p(p-1)s$ with $s=\#\{\lambda\in \Aut(E): \sigma \lambda=\sigma\}$. This subgroup is the stabilizer $\Sigma_{\sigma}$ of $\sigma$ under the right group action
	$$
	\begin{array}{ccl}
	\Aut(E)\times \Hom(E,\Z_p^*)&\longrightarrow &\Hom(E,\Z_p^*)\\
	(g, \sigma) &\longrightarrow & \sigma g
	\end{array}.
	$$
	The orbit of $\sigma$ is the set of group homomorphisms $\sigma':E\to \Z_p^*$ such that $\Z_p\rtimes_{\sigma'}E\simeq N$.  Let us remark that for 
    $\lambda\in \Sigma_{\sigma}$ we have $\gamma_i\lambda=\gamma_i$ for all $i$.

	All together, if we write
	$$
	\Aut(\Z_p\rtimes_{\sigma}E)=\left\{ \begin{pmatrix}
	\alpha & \gamma_i\\
	0&\lambda
	\end{pmatrix}\ \colon \alpha\in\Z_p^*,\ i\in\Z_p,\ \lambda\in\Sigma_{\sigma}\right\}
	$$
	then we get a description of the holomorph
	$$
	\Hol(\Z_p\rtimes_{\sigma}E)=\left\{\Big[\begin{pmatrix}
	m\\
	a
	\end{pmatrix}, M\Big]\ \colon m\in\Z_p,\ a\in E,\ M\in\Aut(\Z_p\rtimes_{\sigma}E) \right\}
	$$
	with
	$$
	[u, M_u][v,M_v]=[u\cdot M_u v,M_uM_v ]
	$$
	where $\cdot$ indicates operation in semidirect product $\Z_p\rtimes_{\sigma}E$. The first component gives the action of $\Hol(N)$ on $N$:
    $$
    ([u, M_u],v)\longmapsto u\cdot M_uv.
    $$

 \begin{lemma}\label{ordpsubgp}
   Let $p$ and $n$ be as in the fixed assumption, let $N=\Z_p\rtimes_{\sigma}E$ be a group of order $np$, with $E$ group of order $n$, and assume that $p$ does not divide the order of $\Aut(E)$.
   Then, the subgroups of $\mathrm{Hol}(N)$ of order $p$ acting on $N$ with trivial stabilisers are 
   $$
		P_i=\left\langle \Big[\begin{pmatrix}
		1\\
		1
		\end{pmatrix},\begin{pmatrix}
		1 & \gamma_i\\
		0&1
		\end{pmatrix}\Big] \right\rangle\subset \Hol(N),\qquad i\in\Z_p.
		$$
 \end{lemma}
\begin{proof}
 Let us look for the order $p$ elements in $\Hol(N)$. Since $p$ does not divide the order of $\Aut(E)$, the equality
		$$
		\Big[\begin{pmatrix}
		m\\
		a
		\end{pmatrix},
		\begin{pmatrix}
		\alpha & \gamma\\
		0&\lambda
		\end{pmatrix}\Big]^p=
		\Big[\begin{pmatrix}0\\
		1
		\end{pmatrix},
		\begin{pmatrix}
		1 & 0\\
		0& 1
		\end{pmatrix}\Big]$$ implies that $\alpha=\lambda=1$. In that case, the equality of vectors gives $a^p=1$, whence $a=1$ because $p$ does not divide the order of $E$. We obtain that the elements of $\Hol(N)$ of order $p$ are
		$$
		\ \Big[\begin{pmatrix}
		m\\
		1
		\end{pmatrix},\begin{pmatrix}
		1 & \gamma_{i}\\
		0&1
		\end{pmatrix}\Big],\quad  i\in\Z_p.
		$$
		If $\sigma=1$ (direct product), all the coboundaries $\gamma_i$ are trivial, 
		and there is a unique subgroup of order $p$:
		$$
		\Big\{\Big[\begin{pmatrix}
		m\\
		1
		\end{pmatrix},\begin{pmatrix}
		1 & 0\\
		0&1
		\end{pmatrix}\Big]\ \colon m\in\Z_p
		\Big\}\subset \Hol(N).$$
Its action on $N$ is given by multiplication on $N$, 
that is $ ([u, Id],v)\to u\cdot v $,
 and it has trivial stabilisers.
 
If $\sigma\ne 1$  the
		previous one is a subgroup of $\Hol(N)$, but there are also $p$ different coboundaries $\gamma_i=i\gamma_1$. 
		By considering 
		$$
		\left\langle \Big[\begin{pmatrix}
		0\\
		1
		\end{pmatrix},\begin{pmatrix}
		1 & \gamma_1\\
		0&1
		\end{pmatrix}\Big] \right\rangle=
		\Big\{\Big[\begin{pmatrix}
		0\\
		1
		\end{pmatrix},\begin{pmatrix}
		1 & \gamma_i\\
		0&1
		\end{pmatrix}\Big]\ \colon i\in\Z_p
		\Big\}\subset \Hol(N)
		$$
		we get a subgroup of order $p$, but all its elements stabilise $(0,1)\in N$. If $m$ is not zero, then $$
		\Big[\begin{pmatrix}
		m\\
		1
		\end{pmatrix},\begin{pmatrix}
		1 & \gamma_j\\
		0&1
		\end{pmatrix}\Big]=[\begin{pmatrix}
		1\\
		1
		\end{pmatrix},\begin{pmatrix}
		1 & \gamma_i\\
		0&1
		\end{pmatrix}\Big]^m \quad \mbox{ for }
		i\equiv m^{-1}j\,(\mathrm{mod}\,p).$$
and we obtain the subgroups 
		$$
		P_i=\left\langle \Big[\begin{pmatrix}
		1\\
		1
		\end{pmatrix},\begin{pmatrix}
		1 & \gamma_i\\
		0&1
		\end{pmatrix}\Big] \right\rangle
  =\left\{
  \Big[\begin{pmatrix}
		m\\
		1
		\end{pmatrix},\begin{pmatrix}
		1 & \gamma_{mi}\\
		0&1
		\end{pmatrix}\Big],\quad m\in \Z_p 
  \right\}
  \subset \Hol(N),\quad i\in\Z_p.
		$$
		The orbit of the trivial element $(0,1)\in N$ under the action of $P_i$ has $p$ elements, namely the elements $(m,1)$. Therefore, $P_i$ acts with trivial stabilisers.
\end{proof}

\begin{remark} The condition $p\nmid |\Aut(E)|$ depeds only on $n$ and not on the group $E$, since it is equivalent to $p\nmid \theta(n)$ where 
	$\theta\colon\mathbb{Z}_{>0}\longrightarrow\mathbb{Z}_{>0}$ is defined for a prime power by
	$$
	\theta(q^m)=\mid\GL_m(\F_q)\mid=(q^m-1)(q^m-q)\dots (q^m-q^{m-1})
	$$
	and extended multiplicatively (see \cite{BH}). For example, $\theta(12)=\theta(3)\theta(4)=2\cdot 3\cdot 2=12.$
\end{remark}

\begin{lemma}\label{conj}
Let $p$ and $n$ be as in the fixed assumption  and assume that $p\nmid \theta(n)$. If $G$ is a regular subgroup of $\Hol(\Z_p\rtimes_{\sigma}E)$, with $E$ group of order $n$, then the $p$-Sylow subgroup of $G$ is either $P_0$ or $P_{-1}$.
Both subgroups are normal subgroups of $\Hol(\Z_p\rtimes_{\sigma}E)$.
 \end{lemma}
 \begin{proof}
Let $P_j$ be the $p$-Sylow subgroup of the regular subgroup $G$. The conjugation with elements 
$$
\Big[\begin{pmatrix}
	0\\
	a
	\end{pmatrix}, \begin{pmatrix}
	\alpha &\gamma_{i}\\
	0&\lambda
	\end{pmatrix}\Big]\in G$$
 gives 
 $$
 \begin{array}{l}
\Big[\begin{pmatrix}
	0\\
	a
	\end{pmatrix}, \begin{pmatrix}
	\alpha &\gamma_{i}\\
	0&\lambda 
	\end{pmatrix}\Big]\Big[\begin{pmatrix}
		1\\
		1
		\end{pmatrix},\begin{pmatrix}
		1 & \gamma_j\\
		0&1
		\end{pmatrix}\Big]
 \Big[\begin{pmatrix}
	-\alpha^{-1}\gamma_{i}(a^{-1})\\
	\lambda^{-1}(a^{-1})
	\end{pmatrix}, \begin{pmatrix}
	\alpha^{-1} &\gamma_{-\alpha^{-1}i}\\
	0&\lambda^{-1} 
	\end{pmatrix}\Big]=\\[10pt]
=\Big[\begin{pmatrix}
	\sigma(a)\alpha\\
	a
	\end{pmatrix}, \begin{pmatrix}
	\alpha &\gamma_{i+\alpha j}\\
	0&\lambda 
	\end{pmatrix}\Big],\Big[\begin{pmatrix}
	-\alpha^{-1}\gamma_{i}(a^{-1})\\
	\lambda^{-1}(a^{-1})
	\end{pmatrix}, \begin{pmatrix}
	\alpha^{-1} &\gamma_{-\alpha^{-1}i}\\
	0&\lambda^{-1} 
	\end{pmatrix}\Big]
=\Big[\begin{pmatrix}
	\sigma(a)\alpha(1+j)-\alpha j\\
	1
	\end{pmatrix}, \begin{pmatrix}
	1 &\gamma_{\alpha j}\\
	0 &1 
	\end{pmatrix}\Big],
\end{array}$$
which is in $P_j$ if and only if $j=0$ or $j=-1$.  
On the other hand, for any element in $\Hol(\Z_p\rtimes_{\sigma}E)$ we have
$$
\Big[\begin{pmatrix}
	m\\
	a
	\end{pmatrix}, \begin{pmatrix}
	\alpha &\gamma_{i}\\
	0&\lambda
	\end{pmatrix}\Big]
 =
 \Big[\begin{pmatrix}
	0\\
	a
	\end{pmatrix}, \begin{pmatrix}
	\alpha &\gamma_{i}\\
	0&\lambda
	\end{pmatrix}\Big]\Big[\begin{pmatrix}
 n
	\\
	1
	\end{pmatrix}, \begin{pmatrix}
	1 &0\\
	0&1
	\end{pmatrix}\Big]
 $$
 and
 $$
\Big[\begin{pmatrix}
	m\\
	a
	\end{pmatrix}, \begin{pmatrix}
	\alpha &\gamma_{i}\\
	0&\lambda
	\end{pmatrix}\Big]
 =
 \Big[\begin{pmatrix}
	0\\
	a
	\end{pmatrix}, \begin{pmatrix}
	\alpha &\gamma_{i+\alpha n}\\
	0&\lambda
	\end{pmatrix}\Big]\Big[\begin{pmatrix}
n
	\\
	1
	\end{pmatrix}, \begin{pmatrix}
	1 &\gamma_{-n}\\
	0&1
	\end{pmatrix}\Big]
 $$
with $n=\sigma(a)^{-1}\alpha^{-1}m$. Therefore, the above computation shows that both $P_0$ and $P_{-1}$ are normal subgroups of the holomorph. 
 \end{proof}

Under our assumptions, the group $\Z_p$, the trivial brace of size $p$, is an ideal of any skew brace of size $np$ and taking the quotient structure we obtain a skew brace of size $n$. We analyze this quotient structure in terms of regular subgroups of the corresponding holomorphs.
 
	\begin{theorem}\label{npgivesn} Let $p$ and $n$ be as in the assumption. Let $(B,\cdot,\circ)$ be a skew brace of size $np$ with additive group $N=\Z_p\rtimes_{\sigma} E$ and multiplicative group $G$, where $(E,\cdot)$ is a group of order $n$ and $\sigma: E\to \Z_p^*=\Aut(\Z_p)$ is a group homomorphism. If $p$ does not divide the order of $\Aut(E)$, then 
		the quotient skew brace $B/\Z_p$ is a skew brace $(B_n, \cdot,\circ)$ with additive group $(E,\cdot)$ such that $\sigma$ is a skew brace homomorphism $B_n\to \Z_p^*$ and $G\simeq \Z_p\rtimes_{\tau} (B_n,\circ)$ for some $\tau: (B_n,\circ)\to \Z_p^*$ group homomorphism. 
		
		An isomorphism class of skew braces of size $np$ with additive group $\Z_p\rtimes_{\sigma} E$ provides an isomorphism class of skew braces of size $n$ with additive group $E$.
	\end{theorem}
	
\begin{proof}
As in Proposition \ref{GV}, the multiplicative group of $B$ is the regular subgroup 
$$
G=\Big\{\Big[\begin{pmatrix}
		m\\
		a
		\end{pmatrix},\Lambda_{(m,a)}\Big]: m\in\Z_p,\ a \in E\Big\}\subseteq \Hol(\Z_p\rtimes_{\sigma} E).
        $$
Let $P_i$, $i\in\{0,-1\}$, be  the $p$-Sylow subgroup of $G$. Since the projection of $P_i$ on its first component equals the subgroup of order $p$ of $N$, $P_i$ corresponds to a subbrace of $B$. Since $\Z_p$ is a characteristic subgroup of $N$, this subbrace is an ideal and the corresponding quotient is a brace of size $n$.

Using the assumptions, Schur-Zassenhaus theorem states that $P_i$ has a complement $\hat F$, a subgroup of order $n$, and that $G=P_i\rtimes \hat F$, the inner semidirect product, with $\hat F$ acting on $P_i$ by conjugation. Due to regularity of $G$ and taking into account the description of elements in $P_i$ and elements in $\Hol(N)$, we must have
\begin{equation}\label{F}		
\hat F=\Big\{ h_a\coloneqq\Big[\begin{pmatrix}
		m_a\\
		a
		\end{pmatrix},\begin{pmatrix}
		\alpha_a & \gamma_{i_a}\\
		0&\lambda_a
		\end{pmatrix}\Big] \, : \, a \in E\Big\},
\end{equation}
where $a \mapsto m_a$, $a \mapsto \alpha_a$, $a \mapsto \gamma_{i_a}$, $a \mapsto \lambda_a$ define maps from $E$ to $\Z_p$, $\Z_p^*,$ $C^1(E,\Z_p)$ and $\Aut E$, respectively. The product of two elements  $h_a,h_b\in \hat F$ is
		$$h_ah_b=\Big[\begin{pmatrix}
		m_a+\sigma(a)(\alpha_a m_b+\gamma_{i_a}(b))\\
		a\lambda_a(b)
		\end{pmatrix}, \begin{pmatrix}
		\alpha_a\alpha_b & \alpha_a\gamma_{i_b}+\gamma_{i_a}\\
		0&\lambda_a\lambda_b
		\end{pmatrix}\Big].
		$$ Note that we have used the equality $\gamma_{i_a}\lambda_b=\gamma_{i_a}$, which derives from  $\sigma\lambda_b=\sigma$.
Since $\hat F$ is a subgroup, we have $h_ah_b=h_{a\lambda_a(b)}$. This gives rise to the following equalities:
\begin{align}
m_{a\lambda_a(b)}&=m_a+\sigma(a)\alpha_am_b+\sigma(a)\gamma_{i_a}(b),\label{sgc1}\\
\alpha_{a\lambda_a(b)}&=\alpha_a\alpha_b,\\
\gamma_{i_{a\lambda_a(b)}}&=\gamma_{i_a}+\alpha_a\gamma_{i_b}, \label{sgc3} \\
\lambda_{a\lambda_a(b)}&=\lambda_a\lambda_b. \label{sgc4}
\end{align}
From \ref{sgc4}, we obtain that  $F=\left\{
		(a, \lambda_a) :\, a\in E
		\right\}
		$
		is a regular subgroup of $\Hol(E)$, and $h_a\to (a,\lambda_a)$ gives $\hat F\simeq F$. 
        Therefore, $G$ is isomorphic to an outer semidirect product $\Z_p\rtimes_{\tau} F$ for 
        $\tau:F\simeq \hat F\to \Aut(P_i)\simeq \Aut(\Z_p)\simeq \Z_p^*$.
        Since
		$a\circ b=a\cdot \lambda_a(b)$ and $\sigma\lambda_a=\sigma$ we see that $\sigma:E\to \Z_p^*$ is in fact a skew brace homomorphism and $G$ is as in the statement.
  
Let us prove the last sentence. If we consider  $
		M=\begin{pmatrix}
		\alpha & \gamma\\
		0&\lambda
		\end{pmatrix}
		\in\Aut(\Z_p\rtimes_{\sigma} E)\subseteq \Hol(\Z_p\rtimes_{\sigma} E)$
	and the inner automorphism $\Phi_M$ of conjugation by $M$ in $\Hol(\Z_p\rtimes_{\sigma}E)$, then
		$$
		\Phi_M \Big(\Big[\begin{pmatrix}
		m\\
		a
		\end{pmatrix},\begin{pmatrix}
		\alpha_a & \gamma_{i_a}\\
		0&\lambda_a
		\end{pmatrix}\Big]\Big)=\Big[M\begin{pmatrix}
		m\\
		a
		\end{pmatrix},M\begin{pmatrix}
		\alpha_a & \gamma_{i_a}\\
		0&\lambda_a
		\end{pmatrix}M^{-1} \Big]=\Big[\begin{pmatrix}
		\alpha m+\gamma(a)\\
		\lambda(a)
		\end{pmatrix}, \begin{pmatrix}
		\alpha_a & *\\
		0&\lambda\lambda_a\lambda^{-1}
		\end{pmatrix}\Big].
		$$
	Therefore, the subgroup of $\Hol(E)$ provided by a conjugate of $G$ is 
		$$
		\Phi_{\lambda}(F)=\left\{
		(\lambda(a), \lambda\lambda_a\lambda^{-1}) :\quad a\in E
		\right\}
		$$
        where $\Phi_{\lambda}$ is the inner automorphism of $\Hol(E)$ of conjugation by $\lambda\in\Aut(E)\subseteq \Hol(E).$
\end{proof}
	
Now, the remaining question is to determine how many skew braces of size $np$ give the same quotient structure of size $n$.
We prove that we have just the two described in Lemma \ref{rm}.
	
\begin{theorem}\label{2Gs}
Let $p$ and $n$ be as in the assumption and such that $p\nmid \theta(n)$. Let $B_n$ be a skew brace of size $n$ with additive group $(E,\cdot)$ and let
	$F=\{(a,\lambda_a): a\in E\}$ the corresponding regular subgroup of $\Hol(E)$.
		
	For every $\sigma\in\Hom(B_n,\Z_p^*)$ and every $\tau\in \Hom(F,\Z_p^*)$,
	$$
	G_{(\sigma,\tau)}=\left\{\Big[\begin{pmatrix}m\\a\end{pmatrix},
	\begin{pmatrix}\sigma(a)^{-1}\tau(a,\lambda_a) & 0\\0&\ \lambda_a\end{pmatrix}\Big]
		\ \colon 
		m\in\Z_p,\  a\in E
		\right\}
	$$
	$$G'_{(\sigma,\tau)}=\left\{\Big[\begin{pmatrix}m\\a\end{pmatrix},
		\begin{pmatrix}\tau(a,\lambda_a) & \gamma_{-\sigma(a^{-1})m}\\0&\lambda_a\end{pmatrix}\Big]
		\ \colon 
		m\in\mathbb Z_p,\  a\in E
		\right\}
	$$
	are regular subgroups of $\Hol(\Z_p\rtimes_{\sigma}E)$
	isomorphic to $\Z_p\rtimes_{\tau} F$.
		
	We have $G_{(1,\tau)}=G'_{(1,\tau)}$ for any $\tau$. For a
	non-trivial $\sigma$,
	$G_{(\sigma,\tau)}$ and $G'_{(\sigma,\tau)}$ are not conjugate in $\Hol(\Z_p\rtimes_{\sigma}E)$. Moreover, $G_{(\sigma,\tau)}$
	and $G'_{(\sigma,\tau)}$ represent the unique conjugacy classes of regular 
	subgroups of $\Hol(\Z_p\rtimes_{\sigma} E)$ among the isomorphism class of $\Z_p\rtimes_{\tau}F$ such that the quotient of the corresponding skew brace of size $np$ by the trivial brace $\Z_p$ is isomorphic to $B_n$.
\end{theorem}
	
\begin{proof}
Since $\sigma$ gives a group homomorphism $E\to \Z_p^*$, we can consider the semidirect product $N=\Z_p\rtimes_{\sigma}E$.  Let $G$ be a regular subgroup of $\Hol(N)$ and $B$ the corresponding skew brace of size $np$. Let us assume that the quotient structure $B/\Z_p$ corresponds to $F$, so that $G\simeq \Z_p\rtimes_{\tau} F$. Following the description of $G$ from the proof of Theorem \ref{npgivesn}, either $G=P_0 \rtimes \hat F$ or $G=P_{-1}\rtimes \hat F$, for some subgroup $\hat F$ as in \eqref{F}. The action given by $F\simeq \hat F\to \Aut(P_i)\simeq \Aut(\Z_p)\simeq \Z_p^*$ is 
$$
(a,\lambda_a)\to h_a\to \Phi_{h_a} \in\Aut(P_i)
$$
followed by the identification of the element in $\Z_p*$ corresponding to the conjugation in $P_i$ by $h_a$.
We obtain 
 $(a,\lambda_a) \mapsto \sigma(a)\alpha_a\in\Z_p^*$ for $i=0$ and $(a,\lambda_a) \mapsto \alpha_a\in \Z_p^*$ for $i=-1$. We want this to agree with the given $\tau:F\to \Z_p^*.$

Let us see that, up to conjugation in $\Hol(E)$, we have actually $G=P_i \rtimes F_0$, where
$$F_0=\Big\{\Big[\begin{pmatrix}
		0\\
		a
		\end{pmatrix},\begin{pmatrix}
		\alpha_a & 0\\
		0&\lambda_a
		\end{pmatrix}\Big] \, : \, a \in E\Big\}\subseteq \Hol(E),$$
with $\lambda_a$ determined by $F$, $\alpha_a=\sigma(a^{-1})\tau(a,\lambda_a)$ for $i=0$ and $\alpha_a=\tau(a,\lambda_a)$ for $i=-1$.

 The equality \eqref{sgc3} is equivalent to $i_{a\lambda_a(b)}=\alpha_a i_b+i_a$, which gives that $a \to i_a$ is a 1-cocycle from $F$ to $\Z_p$ with respect to the action given by $\alpha\colon a\to\alpha_a$, and therefore
 a coboundary. We have then $i_a=j(1-\alpha_a)$ for some fixed $j$. Let us consider first a subgroup $P_0\rtimes \hat F$ with $\hat F$ as in \eqref{F}. We have
		$$\begin{array}{l}
		\Big[\begin{pmatrix}
		0\\
		1
		\end{pmatrix},\begin{pmatrix}
		1 & \gamma_j\\
		0&1
		\end{pmatrix}\Big]\Big[\begin{pmatrix}
		0\\
		a
		\end{pmatrix},\begin{pmatrix}
		\alpha_a & 0\\
		0&\lambda_a
		\end{pmatrix}\Big]
\Big[\begin{pmatrix}
		0\\
		1
		\end{pmatrix},\begin{pmatrix}
		1 & -\gamma_j\\
		0&1
		\end{pmatrix}\Big]\\[12pt]
=\Big[\begin{pmatrix}
		\gamma_j(a)\\
		a
		\end{pmatrix},\begin{pmatrix}
		\alpha_a & -\alpha_a\gamma_j+\gamma_j\\
		0&\lambda_a
		\end{pmatrix}\Big]
=\Big[\begin{pmatrix}
		\gamma_j(a)\\
		a
		\end{pmatrix},\begin{pmatrix}
		\alpha_a & \gamma_{i_a}\\
		0&\lambda_a
		\end{pmatrix}\Big],
\end{array}
		$$
using $-\alpha_a\gamma_j+\gamma_j=(1-\alpha_a)\gamma_j=\gamma_{i_a}$.

On the other hand, inside $P_0\rtimes \hat F$, the subgroup $F$ is determined up to conjugation by $P_0$. Since both $m_a$ and $\gamma_j(a)$ satisfy \eqref{sgc1}, we obtain that $a\mapsto m_a-\gamma_j(a)$ is a 1-cocycle with respect to the action of $F$ on $\Z_p$ given by $\sigma(a)\alpha_a$ and therefore a coboundary:  $m_a-\gamma_j(a)=k(1-\sigma(a)\alpha_a)$ for some fixed integer $k$. By conjugation inside $P_0\rtimes \hat F$, we obtain
		$$\begin{array}{l}
		\Big[\begin{pmatrix}
		k\\
		1
		\end{pmatrix},\begin{pmatrix}
		1 & 0\\
		0&1
		\end{pmatrix}\Big]\Big[\begin{pmatrix}
		\gamma_j(a)\\
		a
		\end{pmatrix},\begin{pmatrix}
		\alpha_a & \gamma_{i_a}\\
		0&\lambda_a
		\end{pmatrix}\Big]
\Big[\begin{pmatrix}
		-k\\
		1
		\end{pmatrix},\begin{pmatrix}
		1 & 0\\
		0&1
		\end{pmatrix}\Big]\\[12pt]
=\Big[\begin{pmatrix}
		k+\gamma_j(a)-\sigma(a)\alpha_a k\\
		a
		\end{pmatrix},\begin{pmatrix}
	\alpha_a & \gamma_{i_a}\\
		0&\lambda_a
		\end{pmatrix}\Big]
=\Big[\begin{pmatrix}
		m_a\\
		a
		\end{pmatrix},\begin{pmatrix}
	\alpha_a & \gamma_{i_a}\\
		0&\lambda_a
		\end{pmatrix}\Big].
        \end{array}
		$$
To sum up, we have obtained that, given any $\hat F$ as in \eqref{F},  $P_0\rtimes F_0$ is conjugate to $P_0\rtimes \hat F$ inside $\Hol(N)$.

Let us consider now  $P_{-1}\rtimes \hat F$ with $\hat F$ as in \eqref{F}. For any integer $k$, we have
		$$\begin{array}{l}
		\Big[\begin{pmatrix}
		0\\
		1
		\end{pmatrix},\begin{pmatrix}
		1 & \gamma_{j+k}\\
		0&1
		\end{pmatrix}\Big]\Big[\begin{pmatrix}
		0\\
		a
		\end{pmatrix},\begin{pmatrix}
		\alpha_a & 0\\
		0&\lambda_a
		\end{pmatrix}\Big]
\Big[\begin{pmatrix}
		0\\
		1
		\end{pmatrix},\begin{pmatrix}
		1 & -\gamma_{j+k}\\
		0&1
		\end{pmatrix}\Big]\\[12pt]
=\Big[\begin{pmatrix}
		\gamma_{j+k}(a)\\
		a
		\end{pmatrix},\begin{pmatrix}
		\alpha_a & -\alpha_a\gamma_{j+k}+\gamma_{j+k}\\
		0&\lambda_a
		\end{pmatrix}\Big]
=\Big[\begin{pmatrix}
		\gamma_{j+k}(a)\\
		a
		\end{pmatrix},\begin{pmatrix}
		\alpha_a & \gamma_{i_a}+(1-\alpha_a)\gamma_k\\
		0&\lambda_a
		\end{pmatrix}\Big].
\end{array}
		$$
As above, $m_a-\gamma_j(a)=k(1-\sigma(a)\alpha_a)$ for some fixed integer $k$. By conjugation inside $P_{-1} \rtimes \hat F$, we obtain
		$$\begin{array}{l}
		\Big[\begin{pmatrix}
		k\\
		1
		\end{pmatrix},\begin{pmatrix}
		1 & \gamma_{-k}\\
		0&1
		\end{pmatrix}\Big]\Big[\begin{pmatrix}
		\gamma_{j+k}(a)\\
		a
		\end{pmatrix},\begin{pmatrix}
		\alpha_a & \gamma_{i_a}+(1-\alpha_a)\gamma_k\\
		0&\lambda_a
		\end{pmatrix}\Big]
\Big[\begin{pmatrix}
		-k\\
		1
		\end{pmatrix},\begin{pmatrix}
		1 & \gamma_k\\
		0&1
		\end{pmatrix}\Big]\\[12pt]
=\Big[\begin{pmatrix}
		k+\gamma_j(a)-\sigma(a)\alpha_a k\\
		a
		\end{pmatrix},\begin{pmatrix}
	\alpha_a & \gamma_{i_a}\\
		0&\lambda_a
		\end{pmatrix}\Big]
=\Big[\begin{pmatrix}
		m_a\\
		a
		\end{pmatrix},\begin{pmatrix}
	\alpha_a & \gamma_{i_a}\\
		0&\lambda_a
		\end{pmatrix}\Big].
        \end{array}
		$$
and we see that, given any $\hat F$ as in \eqref{F},  $P_{-1}\rtimes F_0$ is conjugate to $P_{-1}\rtimes \hat F$ inside $\Hol(N)$.

In conclusion, we have seen that $G_{(\sigma,\tau)}=P_0\rtimes F_0$ and $G'_{(\sigma,\tau)}=P_{-1}\rtimes F_0$ are, up to conjugation, the unique regular subgroups of $\Hol(N)$ giving $B_n$ by the quotient procedure.
\end{proof}

	\begin{remark} The twofold semidirect product of skew braces is a particular case of the semidirect product of digroups as defined by Facchini and Pompili \cite{FP}. Indeed, if we are under the hypotheses of Proposition \ref{dsdp} and in \cite[Proposition 4.2]{FP} we specify $\varphi_{\star}=\sigma$, $\varphi_{\circ}=\tau$ and $\Lambda=\mathrm{Id}$, we obtain a brace isomorphic to the skew brace corresponding to $G_{(\sigma,\tau)}$ in Theorem \ref{2Gs}. On the other hand, if we keep the same choice for $\varphi_{\star}$ and $\varphi_{\circ}$ but we take $\Lambda=\sigma$, we obtain a brace isomorphic to the skew brace corresponding to $G'_{(\sigma,\tau)}$. 
	\end{remark}
	
	Once this 2-to-1 correspondence (for non-trivial $\sigma$) is established, in order to count different skew brace structures, we have to determine conjugation orbits in the families
	$
	\{G_{(\sigma,\tau)}\}_{\sigma,\tau},$ and $ \{G'_{(\sigma,\tau)}\}_{\sigma,\tau}
	$
	or equivalently, isomorphism classes of skew brace structures.
	Therefore, we should take $\sigma$ up to skew brace automorphisms of $(B_n,\cdot,\circ)$ and
	$\tau$ up to group automorphisms of $(B_n, \circ)$.

	\subsection{Algorithm}
	
	Our aim is to determine all skew braces of size $np$ from skew braces of size $n$. Therefore we need the following computation:
	\begin{description}
		\item[Step 0] Determine the isomorphism classes of groups $E$ of order $n$ and the number of skew braces of size $n$ of each type $(E,F)$, identifying $F$ with a representative for a conjugacy class of regular subgroups of $\Hol(E)$.
		
		For every $E$  compute the stabilisers $\Sigma_{\sigma}$ for the action
		$$
		\begin{array}{ccc}
		\Aut(E)\times \Hom(E,\Z_p^*)&\longrightarrow&  \Hom(E,\Z_p^*) \\
		(g, \sigma) &\longrightarrow & \sigma g
		\end{array}
		$$
		namely, $\Sigma_{\sigma}=\{g\in\Aut(E): \  \sigma g=\sigma\}$.
		
	\end{description}
	
	Next, we consider as an input a pair $(E,F)$, with $(E,\cdot)$ a group of order $n$ and $F$ a regular subgroup of $\Hol(E)$. 
    That is, the input is a skew brace $B_n$ of size $n$.  The steps
	to count all the skew braces of size $np$ having additive group isomorphic to $\Z_p\rtimes E$ and multiplicative group isomorphic to $\Z_p\rtimes F$ are the following:
	
	\begin{description}
		
		\item[Step 1] Determine
		$
		\Hom(B_n,\Z_p^*)=\{\sigma\in \Hom(E,\Z_p^*) : \  \pi_2(F)\subseteq \Sigma_{\sigma}\ \}
		$, where $\pi_2$ is the projection of $\Hol(E)=E\rtimes \Aut(E)$ onto the second component $\Aut(E)$.
		
		\item[Step 2] Determine the skew brace automorphism group
		$
		\Aut(B_n)=\{g\in\Aut(E) : \  \Phi_g(F)=F\ \}
		$
        where $\Phi_g$ is the inner automorphism of $\Hol(E)$ corresponding to the element $g\in\Aut(E)\subseteq \Hol(E)$.
		
		\item[Step 3] Compute the number of orbits of the action
		$$
		\begin{array}{ccc}
		\Aut(B_n) \times \Hom(B_n,\Z_p^*)  &\longrightarrow&  \Hom(B_n,\Z_p^*). \\
		(g, \sigma) &\longrightarrow & \sigma g
		\end{array}
		$$
		
		Notice that the orbit of $\sigma=1$, which corresponds to the direct product,  has a single element. The remaining orbits will give rise to two different skew braces.
		
		The number of additive structures for $F$ is the number of orbits.
		Since orbits preserve the order and the isomorphism class of the kernel, we can keep track of the number of structures according to these parameters.

These orbits give a partition of $\Hom(B_n,\Z_p^*)$ and we consider a system of representatives of the corresponding equivalence classes, which amounts to the different isomorphism classes of semidirect products $\Z_p\rtimes E$.
        
		\item[Step 4] For each $\sigma$ in a system of representatives as above, compute the orbits of the action
		$$
		\begin{array}{ccc}
		( \Aut(B_n)\cap \Sigma_{\sigma}) \times \Hom(F,\Z_p^*) &\longrightarrow&  \Hom(F,\Z_p^*). \\
		(g, \tau) &\longrightarrow & \tau\Phi_g
		\end{array}
		$$
        Notice that  $\Aut(B_n)\cap \Sigma_{\sigma}$ is a subgroup of the normalizer of $\pi_2(F)$ in
		$\Sigma_{\sigma}$.

        These orbits correspond to the isomorphism classes of semidirect products $\Z_p\rtimes F$.
		The number of multiplicative structures  is twice the number of orbits except for $\sigma$ trivial, when we get a single one.
		Since orbits preserve order and isomorphism class of kernel, we can keep track of the number of structures according to these parameters.
	\end{description}
	
	\subsection{Number of skew braces of size $np$}
	
	Taking into account the description given in terms of orbits, we can use the orbit-counting to give formulas for the number of skew braces of size $np$ in terms of the number of skew braces of size $n$.
    
    For a fixed skew brace of size $n$, the number of additive structures for skew braces of size $np$ with quotient $B_n$ is the number of orbits of the action of $\Aut(B_n)$ on $\Hom(B_n, Z_p^*)$. 
    For a given $\sigma\in \Hom(B_n, Z_p^*)$, we have to count 
    the orbits of the action of 
    $\Aut(B_n)\cap \Sigma_{\sigma}$ in $\Hom((B_n,\circ), Z_p^*)$.
    The trivial $\sigma$ gives one skew brace of size $np$ for each orbit, while a nontrivial $\sigma$ gives two skew braces of size $np$ for each orbit.
	
	\begin{theorem}
		Let $p$ be a prime and $n$ a positive integer not divisible by $p$ and such that every group of order $np$ has a normal subgroup of order $p$ and such that $p\nmid \theta(n)$.
        
	  For a skew brace $B_n$ of size $n$, the number of additive structures of skew braces of size $np$ is the number of orbits $|\Hom(B_n,\Z_p^*)/\Aut(B_n)|$ of the right action $(g,\sigma)\to \sigma g$. Therefore, the total amount of additive structures is 
		$$
		\sum_{B_n}   \dfrac{1}{|\Aut(B_n)|}\sum_{\sigma\in \Hom(B_n,\Z_p^*)}|\Stab_{\Aut(B_n)}(\sigma)|,
		$$
		where $B_n$ runs over the isomorphism classes of skew braces of size $n$. 
        
         Let us consider the action of $\Aut(B_n,\cdot) $ on $\Hom((B_n,\cdot),\Z_p^*)$ as above and let $\Sigma_{B_n,\sigma}$ be the stabilizer of $\sigma\in \Hom(B_n,\Z_p^*)$. Let $A_{B_n,\sigma}=\Aut(B_n)\cap \Sigma_{B_n,\sigma}$. Then, the number of multiplicative structures of size $np$ corresponding to the additive structure defined by $\sigma$ is obtained from the orbits of the action $(g,\tau)\to \tau\Phi_g$ of $A_{B_n,\sigma}$ on $\Hom((B_n,\circ),\Z_p^*)$.
        The total amount of isomorphism classes of skew braces of size $np$ is
		$$
		\sum_{B_n}\,
		\left(
		\dfrac{1}{|A_{B_n,1}|}\sum_{\tau\in \Hom((B_n,\circ),\Z_p^*)}
		|\Stab_{A_{B_n,1}}(\tau)|
		+2\sum_{\sigma\ne 1}
		\dfrac{1}{|A_{B_n,\sigma}|}\sum_{\tau\in \Hom((B_n,\circ),\Z_p^*)}
		|\Stab_{A_{B_n,\sigma}}(\tau)|\right)
		$$
		where $\sigma$ runs over a system of representatives 
        for the orbits $|\Hom(B_n,\Z_p^*)/\Aut(B_n)|$.
        The case $\sigma=1$, corresponds to the semidirect product of $B_n$ and the trivial brace of order $p$. 
	\end{theorem}		

\begin{remark}
 Using \textit{Burnside lemma}, the number of orbits can also be expressed in terms of fixed points. Therefore, we can give the number of additive structures for skew braces of size $np$ as 
 $$
		\sum_{B_n}\,   \dfrac{1}{|\Aut(B_n)|}\sum_{g \in \Aut(B_n)}|\operatorname {Fix}_{\Hom(B_n,\Z_p^*)}(g)|
		$$
        where $\operatorname{Fix}$ denotes the number of fixed points of the element acting on the set. 
The total amount of skew braces of size $np$ is
$$\sum_{B_n}\,
		\left(
		\dfrac{1}{|A_{B_n,1}|}\sum_{g \in A_{B_n,1}}|\operatorname {Fix}_{\Hom((B_n,\circ),\Z_p^*)}(g)|       
		+2\sum_{\sigma\ne 1}
		\dfrac{1}{|A_{B_n,\sigma}|}\sum_{ g \in A_{B_n,\sigma}}|\operatorname {Fix}_{\Hom((B_n,\circ),\Z_p^*)}(g)|  \right)  
		$$
\end{remark}

	\section{Groups of order 12 and skew braces of size~12}\label{twelve}
	
	There are five isomorphism classes of  groups of order 12, two abelian ones represented by $C_{12}$ and $C_6\times C_2$ and three non-abelian ones, represented by the alternating group $A_4$, the dihedral group $D_{2\cdot 6}$ and the dicyclic group $\Dic_{12}$. By computation with Magma \cite{BCP} of conjugacy classes of regular subgroups of the holomorph, we obtain that the number of skew braces with additive group $E$ and multiplicative group $F$ is as shown in the following table.
	\begin{center}
		\begin{tabular}{|c||c|c|c|c|c|}
			\hline
			$E \backslash F$ & \, \, $C_{12}$ \, \, & $C_6\times C_2$ & \, \, $A_4$ \, \, \, & \, \, $D_{2\cdot 6}$ \, \, & \,  $\Dic_{12}$ \, \, \\
			\hline
			\hline
			$C_{12}$ &1&1&0&2& 1\\
			\hline
			$C_6\times C_2$ &1&1&1&1& 1\\
			\hline
			$A_4$ &0&2&4&0& 2\\
			\hline
			$D_{2\cdot 6}$ &2&2&0&4& 2\\
			\hline
			$\Dic_{12}$ &2&2&0&4& 2\\
			\hline
		\end{tabular}
	\end{center}
	
	We determine now for each group $E$ of order 12, its automorphism group $\Aut E$, the group $\Hom(E,\Z_p^*)$ and the stabilizers of the corresponding group action.
	
	Since $\Z_p^*=\langle \zeta \rangle $ is a cyclic group of order $p-1$,
	for each $d\mid p-1$ it has a unique subgroup $\mu_d=\langle \zeta^{\frac{p-1}d} \rangle$ of order $d$. We  write $\zeta_d$ for  $ \zeta^{\frac{p-1}d}$.
	In order to have elements in
	$\Hom(E ,\Z_p^*)$ with image in $\mu_d$ it is necessary that
	$d\mid D\coloneqq\gcd (\exp(E),p-1)$, where $\exp(E)$ is the exponent of the group $E$.
	
	\subsection{$E=C_{12}$}\label{ciclic}

\begin{center}
    \begin{tabular}{|c|c|c|}
\multicolumn{3}{c}{$E=\langle c\rangle\simeq C_{12}$}\\[1ex]
\hline
$\Aut(E)$&$\langle g_5\rangle\times \langle g_7\rangle\simeq\Z_{12}^*\simeq \Z_2\times \Z_2$ &
	 $g_i(c)=c^i$\\[1ex]
$\Hom(E,\Z_p^*)$ & 
$\mu_{D}=\langle \zeta_D\rangle$, $D=\gcd(12,p-1)$ &
$\sigma \to \sigma(c)=\zeta_d^j$, $\gcd(j,d)=1$ $\rightsquigarrow (d,j\bmod d)$\\
\hline
\end{tabular}
  \end{center}  
	
    Let $\sigma\in\Hom(E,\Z_p^*)$ and let $d$ be its order.
    Then $\sigma$ can be identified as a pair $(d, j)$ for $j\in(\Z/d\Z)^*$, so that $(d,j)g_i=(d,ij)$. 
    Elements of the same order are in the same orbit under the action of $\Aut(E)$ since
	$
	(d,j)g_i=(d,j') \mbox{ for } i\equiv j'j^{-1}\,(\mathrm{mod}\,d).
	$
	We use the size  $k=12/d$  of the kernel of $\sigma$ to parametrize the orbits. As for the stabilizers, since
	$\sigma g_i=\sigma\iff \sigma(c)^i=\sigma(c)$,
	we have
	$
	\Sigma_{\sigma}=\{g_i\in \Aut(E) : i\equiv 1 \bmod d\},
	$
	which identifies as a subgroup of $\Z_{12}^*=\{1,5,7,11\}$.
	\begin{center}
		\begin{tabular}{c|l | c}
			$k$  &Orbit of $\sigma=(d,j\bmod d)$& $\Sigma_{\sigma}$ \\
			\hline
			$12$&  $(1,1)$ & $\Aut(E)$\\
			$6$&  $(2,1)$ & $\Aut(E)$\\
			$4$&   $(3,1)\xrightarrow{g_5}(3,2)$& $\{1,g_7\}$\\
			$3$&  $(4,1)\xrightarrow{g_7}(4,3)$& $\{1,g_5\}$\\
			$2$&   $(6,1)\xrightarrow{g_5}(6,5)$& $\{1,g_7\}$\\
			$1$&   
			$(12,1)\xrightarrow{g_5}(12,5)\xrightarrow{g_{11}}(12,7)\xrightarrow{g_5}(12,11)$ & $\{1\}$\\
		\end{tabular}
	\end{center}

	The number of isomorphism classes of semidirect products
	$\Z_p\rtimes C_{12}$ is equal to the number of divisors of
	$D=\gcd(12,p-1)$
	and can be labelled by the size $k$ of $\Ker\sigma$ (size 12 means direct product).

	\subsection{$E=C_{6}\times C_2$}\label{C62}

    \begin{center}
    \begin{tabular}{|c|c|c|}
\multicolumn{3}{c}{$E=\langle a\rangle \times \langle b \rangle\simeq C_{6}\times C_2$}\\[1ex]
\hline
$\Aut(E)$ &  $\langle g_1,\ g_2\rangle\simeq D_{2\cdot 6}$ &
	 $g_1(a)=ab,g_1(b)=b,g_2(a)=a^2b,g_2(b)=a^3b$\\[1ex]
$\Hom(E,\Z_p^*)$ & 
$\mu_{D}\times \mu_2=\langle \zeta_D\rangle\times \langle -1\rangle $, $D=\gcd(6,p-1)$ &
$\sigma(a)=\zeta_d^j$, $\sigma(b)=(-1)^i$  $\rightsquigarrow (j\bmod d,i\bmod 2)$\\
\hline
\end{tabular}
  \end{center}

	We get the following orbits and stabilizers for the action of $\Aut(E)$. We give stabilizers of elements of each orbit as conjugates of the stabilizer of the first one and the isomorphism class of the group.
	\begin{center}
		\begin{tabular}{c|c|l| lll}
			$k$& $d$ & Orbit of $\sigma=(j\bmod d,i\bmod2)$ 
            & $\Sigma_{\sigma}$ && \\
			\hline
			12&1    &  $(1,0)$ &$\Aut(E)$ &&\\
			6&2    & $(1,0)\xrightarrow{g_2} (0,1)\xrightarrow{g_2} (1,1)$  
            &
            $g_2^m \langle g_2^3, g_1\rangle g_2^{-m}$ & $m=0,1,2$  &$C_2\times C_2$
            \\
			4&3    & $(1,0)\xrightarrow{g_2} (2,0)$ 
            &$g_2^m \langle g_2^2, g_1\rangle g_2^{-m}$&$m=0,1$ &$S_3$ \\
			2&6    & $(1,0)\xrightarrow{g_2} (2,1)\xrightarrow{g_2} (1,1)
			\xrightarrow{g_2} (5,0)\xrightarrow{g_2} (4,1)\xrightarrow{g_2} (5,1)$
            &$g_2^m \langle g_1\rangle g_2^{-m}$&$ m=0,1,2$ & $\C_2$
            \\
		\end{tabular}
	\end{center}
The number of isomorphism classes of semidirect products
	$\Z_p\rtimes ( C_{6}\times C_2)$ is equal to the number of divisors $d$ of
	$D=\gcd(6,p-1)$ 
	and can be labelled by the size $k=12/d$ of $\Ker\sigma$.

	\subsection{$E=A_4$}\label{A4}

    \begin{center}
    \begin{tabular}{|c|c|c|}
\multicolumn{3}{c}{$E=A_4=\langle (1,2,3), (1,2)(3,4)\rangle\subseteq S_4 $}\\[1ex]
\hline
$\Aut(E) $&$S_4$ &
	$
	\phi_s(1,2,3)=(s(1),s(2),s(3))$\\
    & &  $\phi_s(1,2)(3,4)=(s(1),s(2)),(s(3),s(4))$\\[1ex]
$\Hom(E,\Z_p^*)$ & 1 if $p\not\equiv 1\bmod 3$ & \\
&$\langle \zeta_3\rangle$ if $p\equiv 1\bmod 3$ &
$\sigma \to \sigma(1,2,3)=\zeta_3^j \rightsquigarrow j\bmod 3$\\
\hline
\end{tabular}
  \end{center} 

    The information about orbits and stabilizers for the action of $\Aut(A_4)$ on $\Hom(A_4,\Z_p^*)$ is provided in the following table:
	\begin{center}
		\begin{tabular}{c|c|l|l}
			$k$ & $ d$ & Orbit of $\sigma=j \bmod 3$ & Stabiliser $\Sigma_{\sigma}$ \\
			\hline
			12 &1 & $0$   & $S_4$\\
			4&3 & $1\xrightarrow{\phi_{(1,2)}}2$ & $A_4$\\
		\end{tabular}
	\end{center}

	\subsection{$E=D_{2\cdot 6}$}\label{D26}

      \begin{center}
    \begin{tabular}{|c|c|c|}
\multicolumn{3}{c}{$E=\langle r,s \mid r^6=\Id,\ s^2=\Id,\ srs=r^5 \rangle$}\\[1ex]
\hline
$\Aut(E) $&$\langle g_1,\ g_2\rangle\simeq D_{2\cdot 6}$ &
	 $g_1(r)=r^{-1},g_1(s)=s,g_2(r)=r,g_2(s)=rs$\\[1ex] 
$\Hom(E,\Z_p^*)$ & $\mu_{2}\times \mu_2=\{\pm 1 \} \times \{\pm 1 \}$& $\sigma \to  (\sigma(r),\sigma(s))$\\
\hline
\end{tabular}
  \end{center} 
	
	This group $E$ has exponent $6$ and the unique normal subgroups with cyclic quotient are those of index 2. As elements in $\Hom(E,\Z_p^*)$, the kernel of 
	$(1,-1)$ is $\langle r\rangle$ while $(-1,-1)$ and $(-1,1)$ have kernel isomorphic to dihedral group $D_{2\cdot 3}$. Therefore, they give at least two orbits under the action of $\Aut(E)$.
    In fact, this action  can be described as follows:
	\begin{center}
		\begin{tabular}{c|c|l|l}
			$k$ & $ d$ & Orbit of $\sigma$& Stabilizer $\Sigma_{\sigma}$ \\
			\hline
			12 &1 & $(1,1)$   & $\Aut(E)$\\
			6&2 & $(1,-1)$   & $\Aut(E)$\\
			6&2 & $(-1,-1)\xrightarrow{g_2}(-1,1)$   & $\langle g_1,g_2^2\rangle\simeq D_{2\cdot 3}$\\
		\end{tabular}
	\end{center}

     We have three orbits for the action, which means three isomorphism classes of semidirect (or direct) products
	$\Z_p\rtimes E$, parameterized by the isomorphism class of the kernel. For $\sigma=(1,-1)$, we write $\mathbb{Z}_p\rtimes_6^c E$ (cyclic kernel), while for $\sigma=(-1,-1)$ we write $\mathbb{Z}_p\rtimes_6^d E$ (dihedral kernel).

	\subsection{$E=\Dic_{12}$}\label{dici}

     \begin{center}
    \begin{tabular}{|c|c|c|}
\multicolumn{3}{c}{$E=\langle x,y \mid x^3=1, y^4=1, yxy^{-1}=x^2\rangle$}\\[1ex]
\hline
$\Aut(E) $&$\langle g_1,\ g_2\rangle\simeq D_{2\cdot 6}$ &
	 $g_1(x)=x^2,g_1(y)=y^3,g_2(x)=x,g_2(y)=xy^3$\\[1ex] 
$\Hom(E,\Z_p^*)$ & $\mu_{D}=\langle \zeta_D\rangle$, $D=\gcd(4,p-1)$& $\sigma \to  \sigma(y)=\zeta_d^j\rightsquigarrow (d,j\bmod d)$\\
\hline
\end{tabular}
  \end{center} 
	
	The exponent of the dicyclic group is $6$ (the element $xy$ has order $6$)
	and the normal subgroups with cyclic quotient are $\langle x^2\rangle$ and the derived subgroup $\langle x\rangle$. Therefore, $\sigma(x)=1$ for all
	$\sigma\in \Hom(E,\Z_p^*)$ and the image of $y$ determines $\sigma$. The action of $\Aut(E)$ goes as follows:
	\begin{center}
		\begin{tabular}{c|l|l}
			$k$  & Orbit of $\sigma=(d,j\bmod d)$& Stabiliser $\Sigma_{\sigma}$ \\
			\hline
			12  & $(1,1)$   & $\Aut(E)$\\
			6& $(2,1)$   & $\Aut(E)$\\
			3&  $(4,1)\xrightarrow[g_2^3]{g_1}(4,3)$   & $\langle g_2^2, g_1g_2\rangle\simeq D_{2\cdot 3}$\\
		\end{tabular}
	\end{center}

	\section{Skew braces of size $12p$}\label{braces12p}
	
	Once we have done the precomputations for all groups $E$, we bring in the multiplicative group $F$ of the brace $B_{12}$.
	In all the sub-cases we follow the procedure described in Section \ref{method} and the notations in Section \ref{twelve}.

	\subsection{$F=C_{12}$}
	
	According to the table in Section \ref{twelve} and Theorem \ref{npgivesn}, for a brace of size 12 with multiplicative group  $C_{12}$, the additive group can
	be any group except for $A_4$. In all the sub-cases the group $\Hom(F,\Z_p^*)$ is as in \ref{ciclic} so that
	$\tau$ identifies as a pair $(d,j)$ meaning that the given generator of $F$ has image $\zeta_d^j.$

	\subsubsection*{Case $E=C_{12}$}
	
	Let $E=\langle c\rangle$. There is a unique conjugacy class of regular subgroups of $\Hol(E)$ isomorphic to $C_{12}$. In the following table we summarize all the information needed for the algorithmic procedure:

\begin{center}
    \begin{tabular}{|l|l|}
    \hline
      Representative& $F=\langle\  (c,\Id)\ \rangle $ \\
      \hline
      $\pi_2(F)\subseteq \Sigma_{\sigma}$ &  $\forall\sigma$ since $\pi_2(F)=\{Id\}$ \\
      $\Hom(B_{12},\Z_p^*)$ & $\Hom(E,\Z_p^*)$  \\
      $\Phi_g(F)=F$ & $\forall g$ since $F$ is normal in $\Hol(E)$\\
      $\Aut(B_{12})$ & $\Aut(E)$\\
      Orbits in $\Hom(B_{12},\Z_p^*)$ & As in \ref{ciclic}: one for each size $k$\\
      \hline
    \end{tabular}
\end{center}

	Finally we have to compute the orbits of the action  of $\Sigma_{\sigma}$ on $\Hom(F,\Z_p^*)$.
	
	Since $\Phi_{g_i}(c,\Id)=(g_i(c),g_i \Id g_i^{-1})=(c^i,\Id)$,
	two elements $(d,j)$ and $(d,j')$ in $\Hom(F,\Z_p^*)$ are at the same orbit if and only if $j\equiv ij'\bmod d$ for some $i$ with $g_i\in \Sigma_{\sigma}$.
	If we count the number of orbits according to the size $\frac{12}d$ of $\Ker(d,j)$, we obtain
	\begin{center}
		\begin{tabular}{c|l|cccccc}
			$k_{\sigma}$ & $\Sigma_{\sigma}$   & 12 &6 &4&3&2&1  \\
			\hline
			$12,6$ & $\langle g_5,g_7\rangle$ & 1 &1 &1&1&1&1\\
			$4,2$&  $\langle g_7\rangle$& 1&1&2&1&2&2 \\
			$3$ &  $\langle g_5\rangle$& 1 & 1& 1& 2& 1&2  \\
			$1$ &  $\{1\}$& 1 &1 &2&2&2 &4\\
		\end{tabular}
	\end{center}
	For $k_{\sigma}\ne 12$, the number of multiplicative structures is twice the number of orbits.

	\begin{proposition}
		Let $p$ be a prime number, $p\geq 7$. The number of skew braces with additive group $N:=\Z_p \rtimes_k C_{12}$ and multiplicative group
		$G:=\Z_p\rtimes_{k'} C_{12}$ is as shown in the following table, where we need $p\equiv 1 \pmod{\frac {12}k}$ for a kernel of size $k$ to occur.
		
		\begin{center}
			
			\begin{tabular}{|c||c|c|c|c|c|c|}
				\hline
				$N \backslash G$ & $\Z_p\times C_{12}$ & $\Z_p\rtimes_{6} C_{12}$ & $\Z_p\rtimes_{4} C_{12}$ & $\Z_p\rtimes_{3} C_{12}$ & $\Z_p\rtimes_{2} C_{12}$& $\Z_p\rtimes_{1} C_{12}$\\
				\hline
				\hline
				$\Z_p \times C_{12}$ &1&1&1&1 &1&1\\
				\hline
				$\Z_p \rtimes_{6} C_{12}$ &2&2&2 &2 &2 &2\\
				\hline
				$\Z_p \rtimes_{4} C_{12}$ &2&2& 4 &2  &4 &4\\
				\hline
				$\Z_p \rtimes_{3} C_{12}$ &2& 2 & 2&4 &2 &4 \\
				\hline
				$\Z_p \rtimes_{2} C_{12}$ &2&2& 4 &2 &4&4\\
				\hline
				$\Z_p \rtimes_{1} C_{12}$ &2& 2& 4&4 &4 & 8 \\
				\hline
				
			\end{tabular}
		\end{center}
		
	\end{proposition}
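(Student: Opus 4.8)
The plan is to run the four--step procedure of Section~\ref{method} on the input brace $B_{12}$ of type $(C_{12},C_{12})$, using the data already compiled in Subsection~\ref{ciclic}, and then to convert the resulting orbit counts into brace counts by Proposition~\ref{2Gs}. By Proposition~\ref{pdsdp} every brace of size $12p$ is a double semidirect product of the trivial brace of size $p$ and a brace of size $12$, and by Proposition~\ref{2Gs} the regular subgroups $G(\sigma,\tau)$ and $G(\sigma,\tau)'$ produced from such data exhaust, up to conjugacy in $\Hol(N)$, all braces with a prescribed additive group $N$; so the proof reduces to an orbit computation together with the bookkeeping of the $1$ versus $2$ contribution per orbit.

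First I would fix the input. By the table of Section~\ref{twelve} there is a unique brace of type $(C_{12},C_{12})$, realised by the normal regular subgroup $F=\langle(c,\Id)\rangle\subseteq\Hol(E)$ with $E=\langle c\rangle\simeq C_{12}$. Since $\pi_2(F)=\{\Id\}$, Step~1 gives $\Hom(B_{12},\Z_p^*)=\Hom(E,\Z_p^*)$; since $F$ is normal in $\Hol(E)$, we have $\Phi_g(F)=F$ for every $g\in\Aut E$, so Step~2 gives $\Aut(B_{12})=\Aut(E)\simeq\Z_{12}^*$. Consequently Step~3 is exactly the $\Aut(E)$--orbit computation of Subsection~\ref{ciclic}: the orbits of $\sigma$ are parametrised by the size $k=|\Ker\sigma|\in\{12,6,4,3,2,1\}$ of the kernel, one orbit for each value of $k$ with $p\equiv1\pmod{12/k}$, and these correspond to the additive groups $N=\Z_p\rtimes_k C_{12}$.

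The only computation is Step~4. Fixing a representative $\sigma$ with $|\Ker\sigma|=k$, we have $\Aut(B_{12})\cap\Sigma_\sigma=\Sigma_\sigma$, one of the four subgroups $\langle5,7\rangle,\langle7\rangle,\langle5\rangle,\{1\}$ of $\Z_{12}^*$ listed in Subsection~\ref{ciclic}, and it acts on $\Hom(F,\Z_p^*)$ by $\tau\mapsto\tau\Phi_{g_i}$; since $\Phi_{g_i}(c,\Id)=(c^i,\Id)$, this is the action $(d,j)\mapsto(d,ij\bmod d)$ on the pair with $\tau(c,\Id)=\zeta_d^j$. For each $d\mid\gcd(12,p-1)$ the elements with $\ord(\tau)=d$ form a free $\Sigma_\sigma$--set of size $\varphi(d)$, so the number of orbits with $|\Ker\tau|=12/d$ equals $\varphi(d)$ divided by the order of the image of $\Sigma_\sigma$ in $\Z_d^*$. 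Running this over the four subgroups reproduces the orbit table of Subsection~\ref{ciclic} indexed by $k_\sigma$ and by $|\Ker\tau|\in\{12,6,4,3,2,1\}$.

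Finally I would assemble the count. By Proposition~\ref{2Gs}, when $\sigma=1$ (the case $N=\Z_p\times C_{12}$) each $\tau$--orbit contributes a single brace, whereas when $\sigma\neq1$ it contributes two, coming from the non-conjugate regular subgroups $G(\sigma,\tau)$ and $G(\sigma,\tau)'$ of $\Hol(N)$, both isomorphic to $G=\Z_p\rtimes_{\tau}C_{12}=\Z_p\rtimes_{k'}C_{12}$ with $k'=12/\ord(\tau)$. Multiplying each Step~4 orbit count by $1$ if $k=12$ and by $2$ otherwise, and organising the outcome by the pair $(N,G)$, yields the asserted table, the missing entries reflecting the conditions $p\equiv1\pmod{12/k}$ and $p\equiv1\pmod{12/k'}$ needed for $N$ and $G$ to exist. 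I expect no conceptual obstacle here: the one delicate point is purely combinatorial bookkeeping — computing each of the four orbit--count columns without double counting and keeping track of which $(N,G)$ pairs actually occur for a given residue of $p\bmod12$.
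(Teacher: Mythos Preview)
Your proposal is correct and follows essentially the same approach as the paper: you run the four-step algorithm on the unique brace $B_{12}$ of type $(C_{12},C_{12})$ realised by $F=\langle(c,\Id)\rangle$, use $\pi_2(F)=\{\Id\}$ and normality of $F$ to identify $\Aut(B_{12})=\Aut(E)$ and $\Hom(B_{12},\Z_p^*)=\Hom(E,\Z_p^*)$, compute the $\Sigma_\sigma$-orbits on $\Hom(F,\Z_p^*)$ via $\Phi_{g_i}(c,\Id)=(c^i,\Id)$, and then double the orbit counts for $\sigma\neq1$. One small wording issue: the set of $\tau$ of order $d$ is not in general a \emph{free} $\Sigma_\sigma$-set (the kernel of $\Sigma_\sigma\to\Z_d^*$ may be nontrivial), but your stated formula---$\varphi(d)$ divided by the order of the image of $\Sigma_\sigma$ in $\Z_d^*$---is the correct one, since the image acts freely by multiplication on $\Z_d^*$.
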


	\subsubsection*{Case $E=C_{6}\times C_2$}
    
    Let $E=\langle a\rangle \times \langle b\rangle$.  As before, there is a unique conjugacy class of regular subgroups of $\Hol(E)$ isomorphic to $C_{12}$.

\begin{center}
    \begin{tabular}{|l|l|}
    \hline
      Representative& $F=\langle c=(a,g_1)\rangle$ \\
      \hline
      $\pi_2(F)\subseteq \Sigma_{\sigma}$ &  $\sigma=(1 \bmod d,0\bmod 2)$ for every $d|6$, since $\pi_2(F)=\langle g_1\rangle$ \\
      $\Aut(B_{12})$ & $\langle g_2^3,g_1\rangle=\operatorname{Cent}_{\Aut(E)}(g_1)$.\quad  
      $\Phi_{g_2^3}(c)=c^5$, $\Phi_{g_1}(c)=c^7$ \\
      Orbits in $\Hom(B_{12},\Z_p^*)$ & $4$, one for each size $k$\\
      \hline
    \end{tabular}
\end{center}

	We consider the orbits of $(d,j)\in\Hom(F,\Z_p^*)$. 
	For $k_{\sigma}=12,6$, we have $\langle g_2^3,g_1\rangle {\cap}\Sigma_{\sigma}=\langle g_2^3,g_1\rangle$ and there is a single orbit.  If $k_{\sigma}=4,2$, we have $\langle g_2^3,g_1\rangle {\cap}\Sigma_{\sigma}=
	\langle g_1\rangle$ and the orbits are $\{(d,j),(d,7j\bmod d)\}$.
	We have the following representatives for the orbits:
	\begin{center}
		\begin{tabular}{c|c}
			$k_{\sigma}$ &  $(d,j)$   \\[0.5ex]
			\hline
			$12,6$  & $(d,1)$, $\quad d|12$\\
			$4,2$  & $(1,1)$, $(2,1)$, $(3,1)$, $(3,2)$ (4,1),  (6,1),
			(6,5), (12,1) (12,5) \\
		\end{tabular}
	\end{center}

	\begin{proposition}
		Let $p$ be a prime number, $p\geq 7$. The number of skew braces with additive group $N:=\Z_p \rtimes (C_{6}\times C_{2})$ and multiplicative group $G:=\Z_p\rtimes C_{12}$ is as shown in the following table, where we need $p\equiv 1 \pmod{\frac {12}k}$ for a kernel of size $k$ to occur.
		
		\begin{center}
			
			\begin{tabular}{|c||c|c|c|c|c|c|}
				\hline
				$N \backslash G$ & $\Z_p\times C_{12}$ & $\Z_p\rtimes_{6} C_{12}$ & $\Z_p\rtimes_{4} C_{12}$ & $\Z_p\rtimes_{3} C_{12}$ & $\Z_p\rtimes_{2} C_{12}$& $\Z_p\rtimes_{1} C_{12}$\\
				\hline
				\hline
				$\Z_p\times (C_6\times C_2)$ & 1 & 1&1&1&1&1\\
				\hline
				$\Z_p\rtimes_{6} (C_6\times C_2)$ &2&2&2&2&2&2\\
				\hline
				$\Z_p\rtimes_{4} (C_6\times C_2)$&2 &2&4&2&4&4\\
				\hline
				$\Z_p\rtimes_{2} (C_6\times C_2)$&2 &2&4&2&4&4\\
				\hline
			\end{tabular}
		\end{center}
		
	\end{proposition}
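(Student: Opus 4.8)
The plan is to run Steps~1--4 of the algorithm of Section~\ref{method} on the input brace $B_{12}$ with additive group $E=C_6\times C_2$ and multiplicative group $F\cong C_{12}$, realised by the regular subgroup $F=\langle c=(a,g_1)\rangle\subseteq\Hol(E)$ fixed above, for which $\pi_2(F)=\langle g_1\rangle$. By Proposition~\ref{2Gs} and the analysis of Section~\ref{method}, every brace with additive group $\Z_p\rtimes_\sigma E$ and multiplicative group $\Z_p\rtimes_\tau F$ is conjugate by an element of $\Aut(\Z_p\rtimes_\sigma E)$ to $G(\sigma,\tau)$ or to $G(\sigma,\tau)'$; these coincide when $\sigma=1$ and are not conjugate otherwise, and $G(\sigma,\tau)$ and $G(\sigma,\tau')$ are conjugate exactly when $\tau,\tau'$ lie in one orbit of $A_\sigma:=\Aut(B_{12})\cap\Sigma_\sigma$. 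Hence the entry of the table at additive kernel size $k$ and multiplicative kernel size $k'$ equals $c_{k,k'}$ if $k=12$ (the case $\sigma=1$) and $2c_{k,k'}$ otherwise, where $c_{k,k'}$ is the number of $A_\sigma$-orbits of homomorphisms $\tau:F\to\Z_p^*$ with $|\Ker\tau|=k'$, and $\sigma$ is any representative of the unique additive orbit with $|\Ker\sigma|=k$.

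First I would carry out Step~1: by the stabiliser computation of Subsection~\ref{C62}, $g_1\in\Sigma_\sigma$ if and only if $\sigma(b)=1$, so $\Hom(B_{12},\Z_p^*)=\{\sigma:\sigma(b)=1\}$ is cyclic of order $D=\gcd(6,p-1)$ with generator $a\mapsto\zeta_D$. For Step~2 I would determine $\Aut(B_{12})=\langle g_2^3,g_1\rangle$: a generator of $F$ has the form $c^j$ with $\gcd(j,12)=1$, hence $j$ odd, so its second component is $g_1$; therefore $\Phi_g(c)=(g(a),gg_1g^{-1})\in F$ forces $gg_1g^{-1}=g_1$, which confines $g$ to the centraliser $\langle g_2^3,g_1\rangle$ of $g_1$ in $\Aut(E)\cong D_{2\cdot6}$, and the evaluations $\Phi_{g_1}(c)=c^7$, $\Phi_{g_2^3}(c)=c^5$, $\Phi_{g_2^3g_1}(c)=c^{11}$ confirm that all four elements of this Klein group carry $F$ onto itself. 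For Step~3, since $\Aut(B_{12})\subseteq\Aut(E)$ preserves the order of $\sigma$ and (as in Subsection~\ref{C62}, the element $g_2^3$ acting on $\{\sigma:\sigma(b)=1\}$ by $\sigma(a)\mapsto\sigma(a)^5$) acts transitively on the homomorphisms of each fixed order $d\mid D$, the additive structures are indexed by $k=12/d\in\{12,6,4,2\}$, the value $k$ occurring precisely when $p\equiv1\pmod{12/k}$.

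For Step~4, the stabiliser table of Subsection~\ref{C62} gives $A_\sigma=\langle g_2^3,g_1\rangle$ when $k\in\{12,6\}$ and $A_\sigma=\langle g_2^3,g_1\rangle\cap\Sigma_\sigma=\langle g_1\rangle$ when $k\in\{4,2\}$. Writing $\tau$ as a pair $(d,j)$, the action $\tau\mapsto\tau\Phi_g$ sends $(d,j)$ to $(d,ij\bmod d)$ with $i$ running over $\{1,5,7,11\}=\Z_{12}^*$ if $A_\sigma=\langle g_2^3,g_1\rangle$ and over $\{1,7\}$ if $A_\sigma=\langle g_1\rangle$. In the former case $\Z_{12}^*$ maps onto $\Z_d^*$, so $c_{k,k'}=1$ for every admissible $k'$; in the latter case one counts the orbits of multiplication by $7$ on $\Z_d^*$ for $d=1,2,3,4,6,12$, finding $1,1,2,1,2,2$, that is $c_{k,k'}=1,1,2,1,2,2$ for $k'=12,6,4,3,2,1$. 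Multiplying by $1$ when $k=12$ and by $2$ when $k\in\{6,4,2\}$ gives the rows $(1,1,1,1,1,1)$, $(2,2,2,2,2,2)$, $(2,2,4,2,4,4)$, $(2,2,4,2,4,4)$, which is the asserted table; a row is absent when $p\not\equiv1\pmod{12/k}$ and a column is absent when $p\not\equiv1\pmod{12/k'}$. The only genuinely delicate step is Step~2: one must verify that the whole centraliser of $g_1$ in $\Aut(E)$, and no larger subgroup, preserves $F$, and this is exactly what the three explicit evaluations of $\Phi_g(c)$ achieve; everything else is routine bookkeeping with the divisors of $\gcd(12,p-1)$ and with the orbits of $\langle7\rangle$ on the units modulo $d$.
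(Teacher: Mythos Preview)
Your proof is correct and follows essentially the same approach as the paper: identify $\Hom(B_{12},\Z_p^*)$ via $\pi_2(F)=\langle g_1\rangle$, compute $\Aut(B_{12})$ as the centraliser $\langle g_2^3,g_1\rangle$ of $g_1$ via the explicit evaluations $\Phi_{g_1}(c)=c^7$, $\Phi_{g_2^3}(c)=c^5$, $\Phi_{g_2^3g_1}(c)=c^{11}$, and then count orbits of $A_\sigma$ on $\Hom(F,\Z_p^*)$ using multiplication by $\{1,5,7,11\}$ or by $\{1,7\}$. Your Step~3 is in fact slightly more explicit than the paper's, since you verify transitivity on each order via the action of $g_2^3$ rather than asserting it.
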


	\subsubsection*{Case $E=D_{2\cdot 6}$}
	
	Let $E=\langle r,s\rangle$.
	There are two conjugacy classes of regular subgroups of $\Hol(D_{2\cdot 6})$ isomorphic to $C_{12}$.

\begin{center}
    \begin{tabular}{|l|l|l|}
    \hline
      Representative& $F_1=\langle\  c_1=(s, g_2)\ \rangle $ & $F_2=\langle\  c_2=(rs,g_2g_1 )\ \rangle$ \\
      \hline
      $\pi_2(F)$ & $\langle g_2\rangle \simeq C_6$ &$ \langle g_2g_1\rangle \simeq C_2$\\
      $\pi_2(F)\subseteq \Sigma_{\sigma}$ &{$\sigma=(1,\pm 1)$} &{$\sigma=(1,\pm 1)$}  \\
      $\Phi_g(F)=F$ &{$g\in\langle g_2^3,g_2^2g_1\rangle$}&{$g\in\langle g_2^3,g_2^2g_1\rangle$}\\
      Orbits in $\Hom(B_{12},\Z_p^*)$ & $2$ & $2$ \\
      \hline
    \end{tabular}
\end{center}    

Since $\Sigma_{\sigma}=\Aut(E)$, the action on $\Hom(F_i,\Z_p^*)$ is given by $\Aut(B_{12})=\langle g_2^3,g_2^2g_1\rangle$.
We have
	$$\begin{array}{lclcl}
    \Phi_{g_2^3}(c_1)=c_1^7, && \Phi_{g_2^2g_1}(c_1)=c_1^5,&&\Phi_{g_2^5g_1}(c_1)=c_1^{11}\\
	\Phi_{g_2^3}(c_2)=c_2^7,&& \Phi_{g_2^2g_1}(c_2)=c_2^{11},&&
	\Phi_{g_2^5g_1}(c_2)=c_2^{5}.
    \end{array}$$
	Therefore, in $\Hom(F_i,\Z_p^*)$ we have just one orbit for each $d$.
	
	\begin{proposition}
		Let $p$ be a prime number, $p\geq 7$.
		The number of skew braces with additive group $N:=\Z_p \rtimes D_{2.6}$ and multiplicative group $G:=\Z_p\rtimes C_{12}$ is as shown in the following table, where we need
		$p\equiv 1 \pmod{\frac {12}k}$ for a kernel of size $k$ to occur.

		\begin{center}
			\begin{tabular}{|c||c|c|c|c|c|c|}
				\hline
				$N \backslash G$ & $\Z_p\times C_{12}$ & $\Z_p\rtimes_{6} C_{12}$ & $\Z_p\rtimes_{4} C_{12}$
				& $\Z_p\rtimes_{3} C_{12}$ & $\Z_p\rtimes_{2} C_{12}$& $\Z_p\rtimes_{1} C_{12}$\\
				\hline
				\hline
				$\Z_p \times D_{2\cdot 6}$ &2 & 2 & 2 &2 &2 & 2 \\
				\hline
				$\Z_p \rtimes_6^c D_{2\cdot 6}$ &4 & 4 & 4& 4&4  & 4 \\
				\hline
				
			\end{tabular}
		\end{center}
	\end{proposition}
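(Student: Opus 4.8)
The plan is to run the algorithm of Section~\ref{method} with input $E=D_{2\cdot 6}$ and $F=C_{12}$, using the precomputation of Section~\ref{D26} together with the explicit list, obtained just above, of the two conjugacy classes of regular subgroups of $\Hol(D_{2\cdot 6})$ isomorphic to $C_{12}$, namely $F_1=\langle(s,g_2)\rangle$ and $F_2=\langle(rs,g_2g_1)\rangle$. I would first record that, since $F_1$ and $F_2$ are not conjugate in $\Hol(E)$ and conjugacy in $\Hol(N)$ forces conjugacy in $\Hol(E)$ (by the proposition comparing conjugacy in $\Hol(N)$ with that in $\Hol(E)$ stated above), the braces of size $12p$ produced from $F_1$ and from $F_2$ form disjoint isomorphism families; hence the entry in each cell of the table is the sum of the contribution of $F_1$ and the contribution of $F_2$, and it suffices to compute these two separately.

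For the additive structures (Steps~1--3), I would check that $\pi_2(F_1)=\langle g_2\rangle$ and $\pi_2(F_2)=\langle g_2g_1\rangle$ are contained in a stabilizer $\Sigma_\sigma$ of Section~\ref{D26} only when $\sigma$ is trivial or $\sigma=(1,-1)$, so that in both cases $\Hom(B_{12},\Z_p^*)=\{1,(1,-1)\}$; and that $\Aut(B_{12})=\langle g_2^3,g_2^2g_1\rangle\simeq V_4$ in both cases. Since $1$ and $(1,-1)$ have non-isomorphic kernels they cannot be identified by $\Aut(B_{12})$, so Step~3 produces exactly the two additive structures $\Z_p\times D_{2\cdot 6}$ (from $\sigma=1$) and $\Z_p\rtimes_6^c D_{2\cdot 6}$ (from $\sigma=(1,-1)$), matching the two rows of the table.

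For the multiplicative structures (Step~4), for each of the two representatives $\sigma$ one has $\Sigma_\sigma=\Aut(E)$, so the acting group is $A_\sigma=\Aut(B_{12})\cap\Sigma_\sigma=\langle g_2^3,g_2^2g_1\rangle$ in all cases. Using the computations $\Phi_{g_2^3}(c_i)=c_i^7$ and $\{\Phi_{g_2^2g_1}(c_i),\Phi_{g_2^5g_1}(c_i)\}=\{c_i^5,c_i^{11}\}$ recorded above, I would note that $A_\sigma$ acts on $\Hom(F,\Z_p^*)\simeq\mu_D$, $D=\gcd(12,p-1)$, by multiplying exponents by the image of $\Z_{12}^*$ in $\Z_D^*$, which equals $\Z_D^*$ for every $D\mid 12$; hence there is exactly one orbit of $\tau$'s of each order $d\mid D$, i.e. exactly one for each attainable kernel size $12/d$. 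By Step~4 this single orbit contributes one multiplicative structure when $\sigma=1$ (here $G(1,\tau)=G(1,\tau)'$) and two when $\sigma=(1,-1)$ is nontrivial (here $G(\sigma,\tau)$ and $G(\sigma,\tau)'$ are non-conjugate in $\Hol(N)$, by Proposition~\ref{2Gs}). Summing the contributions of $F_1$ and $F_2$ then gives $1+1=2$ in each cell of the first row and $2+2=4$ in each cell of the second row, the column $\rtimes_kC_{12}$ being non-empty precisely when a $\tau$ of order $12/k$ exists, i.e. when $p\equiv1\pmod{12/k}$.

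The bulk of the work — and essentially the only place where something could go wrong — is verifying that the computation of $\Aut(B_{12})$ and of the $\Phi_g$-action on the generators $c_i$ comes out identically for $F_1$ and for $F_2$, so that the two contributions are literally equal and the rows of the table are constant; this is exactly the explicit computation already displayed in the case $E=D_{2\cdot 6}$ above. Beyond that one must apply Proposition~\ref{2Gs} with care: a nontrivial $\sigma$ doubles each count while leaving the multiplicative group $\Z_p\rtimes_\tau C_{12}$ unchanged, whereas for $\sigma=1$ the two candidate regular subgroups coincide and give a single brace. No genuinely hard step arises; the argument is a bookkeeping of orbits on top of the already-established machinery.
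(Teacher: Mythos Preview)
Your proposal is correct and follows essentially the same approach as the paper: identify the two regular subgroups $F_1,F_2$, check that only $\sigma\in\{1,(1,-1)\}$ satisfies $\pi_2(F_i)\subseteq\Sigma_\sigma$, compute $\Aut(B_{12})=\langle g_2^3,g_2^2g_1\rangle$ for both, and use the explicit values $\Phi_g(c_i)=c_i^k$ with $k$ ranging over $\Z_{12}^*$ to see that $\Hom(F,\Z_p^*)$ has exactly one orbit per order. Your slightly more abstract phrasing of the last step (the action is multiplication by the full image of $\Z_{12}^*$ in $\Z_D^*$, hence transitive on each order) is a clean repackaging of the paper's explicit list of orbit representatives, but the argument is the same.
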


	\subsubsection*{Case $E=\Dic_{12}$}
	
	Let $E=\langle \  x,y\ \rangle$. Again, 
	there are two conjugacy classes of cyclic regular subgroups of $\Hol(\Dic_{12})$.

\begin{center}
    \begin{tabular}{|l|l|l|}
    \hline
      Representative& $F_1=\langle\  c_1=(y, g_2g_1) \ \rangle$ & $F_2=\langle\  c_2=(y,g_2^2) \ \rangle$ \\
      \hline
      $\pi_2(F)$ & $\langle g_2g_1\rangle\subseteq \langle g_2^2, g_1g_2\rangle$ &$\langle g_2^2\rangle\subseteq \langle g_2^2, g_1g_2\rangle$\\
      $\pi_2(F)\subseteq \Sigma_{\sigma}$ & $\forall \sigma$ &
      $\forall \sigma$ \\
      $\Phi_g(F)=F$ &$g\in\langle g_2^3,g_2g_1\rangle$ &$g\in\langle g_2^3,g_2g_1\rangle$\\
      Orbits in $\Hom(B_{12},\Z_p^*)$ & One for each order & One for each order \\
      \hline
    \end{tabular}
\end{center}

For $\sigma=(1,1)$ or $(2,1)$ in $\Hom(E,\Z_p^*)$, we have
$\Aut(B_{12})\cap \Sigma_{\sigma}=\Aut(B_{12})$, which acts on
$\Hom(F,\Z_p^*)$ giving a single orbit for each order $d$,
since
	$$
	\begin{array}{lcl}
	\Phi_{g_2^3}(c_1)=(y^3,g_2g_1)=c_1^7,&& \Phi_{g_2^3}(c_2)=(y^3,g_2^2)=c_2^7,\\
	\Phi_{g_2g_1}(c_1)=(xy,g_2g_1)=c_1^5, &&\Phi_{g_2g_1}(c_2)=(xy, g_2^4)=c_2^5.
\end{array}
$$
For $\sigma=(4,1)$, we have
$\Aut(B_{12})\cap \Sigma_{\sigma}=\langle g_2g_1\rangle$ 
and $(d,j)$ and $(d,j')$ are in the same orbit if and only if $j\equiv 5j'\bmod d$, so that we have orbits represented by
$
(1,1),(2,1),(3,1),(4,1),(4,3),(6,1),(12,1),(12,7).
$

\begin{proposition}
	Let $p$ be a prime number, $p\geq 7$.
	The number of skew braces with additive group $N:=\Z_p \rtimes \Dic_{12}$ and multiplicative group $G:=\Z_p\rtimes C_{12}$ is as shown in the following table, where we need $p\equiv 1 \pmod{\frac {12}k}$ for a kernel of size $k$ to occur.
	
	\begin{center}
		\begin{tabular}{|c||c|c|c|c|c|c|}
			\hline
			$N \backslash G$ & $\Z_p\times C_{12}$ & $\Z_p\rtimes_{6} C_{12}$ & $\Z_p\rtimes_{4} C_{12}$ & $\Z_p\rtimes_{3} C_{12}$ & $\Z_p\rtimes_{2} C_{12}$& $\Z_p\rtimes_{1} C_{12}$\\
			\hline
			\hline
			$\Z_p \times \Dic_{12}$ &2 & 2 & 2 &2 &2 & 2 \\
			\hline
			$\Z_p \rtimes_{6} \Dic_{12}$ &4& 4  & 4& 4& 4 & 4 \\
			\hline
			$\Z_p \rtimes_{3} \Dic_{12}$ &4 & 4 & 4 & 8 & 4 & 8 \\
			\hline
		\end{tabular}
	\end{center}
\end{proposition}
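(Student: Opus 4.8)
The plan is to apply the algorithm of Section~\ref{method} to the two braces $B_{12}$ of type $(\Dic_{12},C_{12})$, whose multiplicative regular subgroups $F_1,F_2\subseteq\Hol(\Dic_{12})$ were exhibited above, and then to add up the resulting orbit counts. Steps~1 and 2 have in effect already been carried out: since $\pi_2(F_1),\pi_2(F_2)\subseteq\langle g_2^2,g_1g_2\rangle\subseteq\Sigma_\sigma$ for every $\sigma\in\Hom(E,\Z_p^*)$, we have $\Hom(B_{12},\Z_p^*)=\Hom(E,\Z_p^*)\cong\mu_{\gcd(4,p-1)}$, and for each of $F_1,F_2$ one has $\Aut(B_{12})=\langle g_2^3,g_2g_1\rangle\simeq V_4$; so only Steps~3 and 4 remain.

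For Step~3 I would note that the action of $\Aut(B_{12})$ on $\Hom(E,\Z_p^*)$ factors through the quotient $E/\langle x\rangle\cong C_4$, through which every $\sigma$ factors, and that on $C_4$ the elements $g_2^3$ and $g_2^4g_1=g_2^3(g_2g_1)$ both act by inversion; hence the two morphisms of order $4$ (present exactly when $p\equiv1\pmod4$) are swapped while those of order $1$ or $2$ are fixed, giving one orbit per order of $\sigma$. These are the three additive structures $\Z_p\times\Dic_{12}$ ($\sigma=1$), $\Z_p\rtimes_6\Dic_{12}$ ($\ord\sigma=2$) and $\Z_p\rtimes_3\Dic_{12}$ ($\ord\sigma=4$), i.e.\ the three rows of the table. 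For Step~4 I would fix $F\in\{F_1,F_2\}$ with generator $c$ and a representative $\sigma$. If $\sigma=1$ or $\ord\sigma=2$ then $\Sigma_\sigma=\Aut(E)$, so $\Aut(B_{12})\cap\Sigma_\sigma=\Aut(B_{12})$; from $\Phi_{g_2^3}(c)=c^{7}$ and $\Phi_{g_2g_1}(c)=c^{5}$ the image of $\Aut(B_{12})$ in $\Aut(F)\cong\Z_{12}^*$ is everything, so there is exactly one orbit of $\tau$'s of each admissible order $d\mid\gcd(12,p-1)$. If $\ord\sigma=4$ then $\Aut(B_{12})\cap\Sigma_\sigma=\langle g_2g_1\rangle$, acting on $F$ by $c\mapsto c^{5}$; counting orbits of multiplication by $5$ on $\Z_d^*$ then gives one orbit for $d\in\{1,2,3,6\}$ and two orbits for $d\in\{4,12\}$ (represented by $(4,1),(4,3)$ and $(12,1),(12,7)$). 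The same counts hold for $F_1$ and $F_2$ because $\Phi_{g_2^3}$ and $\Phi_{g_2g_1}$ act identically on $c_1$ and on $c_2$.

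To assemble the table I would use Proposition~\ref{2Gs}: an orbit of $\tau$ over $\sigma=1$ contributes one brace $B_{12p}$, while an orbit over a nontrivial $\sigma$ contributes two, the subgroups $G(\sigma,\tau)$ and $G(\sigma,\tau)'$ being non-conjugate in $\Hol(N)$; adding the contributions of $F_1$ and $F_2$ gives $1+1=2$ for every entry of the first row, $2+2=4$ for every entry of the second row, and in the third row $2+2=4$ whenever the multiplicative kernel has size $12,6,4$ or $2$ and $4+4=8$ whenever it has size $3$ or $1$, exactly as stated (the congruence constraints on $p$ being just the conditions for a kernel of the given size to occur at all). The only delicate step is the Step~4 count for $\ord\sigma=4$: one must verify that the collapse of $\Aut(B_{12})\cap\Sigma_\sigma$ to the order-$2$ group $\langle g_2g_1\rangle$, acting by the automorphism $c\mapsto c^{5}$ which is trivial modulo $4$, genuinely fails to merge $(4,1)$ with $(4,3)$ and $(12,1)$ with $(12,7)$, and this is what produces the two entries equal to $8$; the remainder is routine bookkeeping.
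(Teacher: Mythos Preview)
Your proof is correct and follows essentially the same route as the paper: you use the same representatives $F_1,F_2$, the same brace automorphism group $\langle g_2^3,g_2g_1\rangle\simeq V_4$, the same actions $\Phi_{g_2^3}(c)=c^7$, $\Phi_{g_2g_1}(c)=c^5$, and the same identification $\Aut(B_{12})\cap\Sigma_\sigma=\langle g_2g_1\rangle$ when $\ord\sigma=4$, leading to the identical orbit list $(1,1),(2,1),(3,1),(4,1),(4,3),(6,1),(12,1),(12,7)$. The only cosmetic difference is that in Step~3 you phrase the argument via the quotient $E/\langle x\rangle\cong C_4$, whereas the paper simply refers back to the orbit computation of Section~\ref{dici}.
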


\subsection{$F=C_{6}\times C_2$}

According to the table in Section \ref{twelve} and Theorem \ref{npgivesn}, for a brace of size 12 with multiplicative group  $C_{12}$, the additive group can
be any group.  In all the sub-cases the group $\Hom(F,\Z_p^*)$ is as in \ref{C62} so that
$\tau$ identifies as a pair $(j\bmod d ,i\bmod 2)$ meaning
$\tau(a)=\zeta_d^j, \ \tau(b)=(-1)^i$ for $F=\langle a\rangle \times \langle b\rangle $.

\subsubsection*{Case $E=C_{12}$}

Let $E=\langle c\rangle$. There is a unique conjugacy class of regular subgroups of $\Hol(E)$ isomorphic to $C_6\times C_2$.  

\begin{center}
    \begin{tabular}{|l|l|}
    \hline
      Representative& $F=\langle  a=(c,g_7) ,\ b=(c^6,\Id)\rangle$ \\
      \hline
      $\pi_2(F)\subseteq \Sigma_{\sigma}$ &  $\sigma=(d,j)$ for $4\nmid d$, since $\pi_2(F)=\langle g_7\rangle$ \\
      $\Aut(B_{12})$ & $\Aut(E)$ since $F$ is normal in $\Hol(E)$\\
      Orbits in $\Hom(B_{12},\Z_p^*)$ & $4$, one for each size $k$\\
      \hline
    \end{tabular}
\end{center}    
    
Regarding the action on  $\Hom(F,\Z_p^*)$, since
$$\begin{array}{lcl}\Phi_{g_5}(a)=(c^5,g_7)=a^5, & & \Phi_{g_5}(b)=(c^6,\Id)=b, \\[0.5ex]
\Phi_{g_7}(a)=(c^7,g_7)=ab, && \Phi_{g_7}(b)=(c^6,\Id)=b, \\[0.5ex]
\Phi_{g_{11}}(a)=(c^{11},g_7)=a^5b, && \Phi_{g_{11}}(b)=(c^6,\Id)=b. \end{array}
$$
we have the following orbits, acording to the order of $\tau=(j\bmod d ,i\bmod 2)$:
$$
\begin{array}{lccccc}
d=1 &&& (1,0)\ \bullet \circlearrowleft &&\\[2ex]
d=2 && & (1,0)\ \bullet \circlearrowleft  &&(0,1)\ \bullet \xleftrightarrow[11]{7}\bullet (1,1)\\[3ex]
d=3 &&&  (1,0)\bullet \xleftrightarrow[11]{5}\bullet (2,0)&&\\[3ex]
d=6 &&& (1,0)\bullet  &&  (1,1)\bullet \xleftrightarrow{7} \bullet (4,1)\\
&& &\ \qquad {_{5}}\Big\updownarrow {_{11}}&&{_{5}}\Big\updownarrow\quad \quad \Big\updownarrow {_{5}}\\
&&&  (5,0)\bullet && (5,1)\bullet \xleftrightarrow{7} \bullet (2,1)\\
\end{array}
$$
In the last two cases, when $\Sigma_{\sigma}=\langle g_7\rangle$ some points get disconnected and we have 2 and 4 orbits, respectively.

\begin{proposition}
	Let $p$ be a prime number, $p\geq 7$. The number of skew braces with additive group $N=\Z_p \rtimes C_{12}$ and multiplicative group $G=\Z_p\rtimes (C_6\times C_2)$ is as shown in the following table, where  we need $p\equiv 1 \pmod{\frac {12}k}$ for a kernel of size $k$ to occur.
	
	\begin{center}
		\begin{tabular}{|c||c|c|c|c|}
			\hline
			$N \backslash G$ & $\Z_p\times (C_6\times C_2)$ & $\Z_p\rtimes_{6} (C_6\times C_2)$ & $\Z_p\rtimes_{4} (C_6\times C_2)$ & $\Z_p\rtimes_{2} (C_6\times C_2)$ \\
			\hline
			\hline
			$\Z_p \times C_{12}$ &1&2&1&2\\
			\hline
			$\Z_p \rtimes_{6} C_{12}$ &2&4&2&4\\
			\hline
			$\Z_p \rtimes_{2} C_{12}$ &2&4&4&8\\
			\hline
			$\Z_p \rtimes_{4} C_{12}$ &2&4&4&8\\
			
			\hline
		\end{tabular}
	\end{center}
\end{proposition}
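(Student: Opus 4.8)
The plan is to specialise the four-step algorithm of Section~\ref{method} to the brace $B_{12}$ of type $(E,F)=(C_{12},\,C_6\times C_2)$, using the data assembled in the paragraphs above, and then to sort the resulting orbit counts by the isomorphism class (equivalently, the kernel size) of $\sigma$ and of $\tau$; the four admissible values of $|\Ker\sigma|$ index the rows and the four values of $|\Ker\tau|$ the columns.

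The input is the unique brace of size $12$ of this type from the table of Section~\ref{twelve}, represented by the regular subgroup $F=\langle a=(c,g_7),\,b=(c^6,\Id)\rangle\trianglelefteq\Hol(C_{12})$. For Step~1, the condition $\pi_2(F)=\langle g_7\rangle\subseteq\Sigma_\sigma$ reads $7\equiv1\pmod{\ord\sigma}$, so $\ord\sigma\in\{1,2,3,6\}$ and $|\Ker\sigma|\in\{12,6,4,2\}$; feeding this through the $\gcd(12,p-1)$ count of~\ref{ciclic} shows that a kernel of size $k$ occurs exactly when $p\equiv1\pmod{12/k}$, which is the proviso in the statement. For Step~2, normality of $F$ in $\Hol(C_{12})$ gives $\Aut(B_{12})=\Aut(C_{12})$, so $\Aut(B_{12})\cap\Sigma_\sigma=\Sigma_\sigma$. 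Moreover each order class of $\sigma$ is a single $\Aut(C_{12})$-orbit (Step~3), so there is exactly one additive structure for each value of $|\Ker\sigma|$, i.e.\ four rows.

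For Step~4 I would split on the value of $\Sigma_\sigma$: it equals all of $\Aut(C_{12})$ when $\ord\sigma\in\{1,2\}$ (since $1,5,7,11$ are all odd, hence $\equiv1\pmod2$) and equals $\langle g_7\rangle$ when $\ord\sigma\in\{3,6\}$. From the recorded action $\Phi_{g_5}(a)=a^5$, $\Phi_{g_7}(a)=ab$, $\Phi_{g_{11}}(a)=a^5b$ (each fixing $b$) I would count, for every $k_\tau\in\{12,6,4,2\}$, the orbits on the set of $\tau\in\Hom(F,\Z_p^*)$ whose image has order $12/k_\tau$: the orbit pictures displayed above for $\tau$ of order $1,2,3,6$ give $1,2,1,2$ orbits under $\Aut(C_{12})$, whereas under $\langle g_7\rangle$ the pictures for orders $3$ and $6$ break up (their links there run only through $g_5$ or $g_{11}$, which fix every character trivial on $b$), giving $1,2,2,4$ orbits. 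By the last paragraph of Section~\ref{method} and Proposition~\ref{2Gs}, the number of braces with the prescribed additive and multiplicative group is the number of these orbits when $\sigma=1$ and twice that when $\sigma\ne1$, the families $G(\sigma,\tau)$ and $G(\sigma,\tau)'$ coinciding only for trivial $\sigma$. Listing the rows in the order $|\Ker\sigma|=12,6,2,4$ of the table, this produces $(1,2,1,2)$, $(2,4,2,4)$, $(2,4,4,8)$, $(2,4,4,8)$, which is exactly the assertion.

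The only genuinely delicate point is the orbit bookkeeping in Step~4 — specifically the claim that restricting from $\Aut(C_{12})$ to $\langle g_7\rangle$ breaks up precisely the orbits of characters of order $3$ and $6$ and leaves all others intact. This is forced by the displayed values of $\Phi_{g_7}$ on the generators together with the observation that $g_7$ fixes every homomorphism that kills $b$; everything else is a direct substitution into the algorithm, and the congruence conditions on $p$ come straight off $D=\gcd(6,p-1)$ for $F$ and $\gcd(12,p-1)$ for $E$.
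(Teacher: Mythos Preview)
Your proposal is correct and follows essentially the same approach as the paper: both specialise the four-step algorithm to the unique brace $B_{12}$ of type $(C_{12},C_6\times C_2)$ via the same regular subgroup $F=\langle(c,g_7),(c^6,\Id)\rangle$, use normality of $F$ to get $\Aut(B_{12})=\Aut(C_{12})$, restrict $\sigma$ to orders $1,2,3,6$ via $\pi_2(F)=\langle g_7\rangle$, and then read off the orbit counts $1,2,1,2$ under $\Aut(C_{12})$ and $1,2,2,4$ under $\langle g_7\rangle$ from the displayed action of $\Phi_{g_5},\Phi_{g_7},\Phi_{g_{11}}$. The only quibble is the parenthetical ``which fix every character trivial on $b$'': grammatically this refers to $g_5,g_{11}$, but it is $g_7$ (not $g_5$ or $g_{11}$) that fixes characters trivial on $b$; the orbit counts you state are nonetheless correct.
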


\subsubsection*{Case $E=C_{6}\times C_2$}

Let $E=\langle a\rangle \times \langle b\rangle$. There is  one conjugacy class of regular subgroups of $\Hol(E))$ isomorphic to $C_6\times C_2$.

\begin{center}
    \begin{tabular}{|l|l|}
    \hline
      Representative& $F=\langle (a,\Id), (b,\Id)\rangle$ \\
      \hline
      $\pi_2(F)\subseteq \Sigma_{\sigma}$ &  $\forall\sigma$ since $\pi_2(F)=\{Id\}$ \\
       Orbits in $\Hom(B_{12},\Z_p^*)$ &  One for each order\\
      \hline
    \end{tabular}
\end{center}   

Regarding the action on $\Hom(F,\Z_p^*)$, for $\sigma=1$ we have one orbit for each order and for $\tau=1$ the orbit has a single point. According to the stabilisers and orbits computed in \ref{C62}, in the remaining cases we have
\begin{center}
	\begin{tabular}{r|c l}
		$\Sigma_{(1,0)}$    & $d$ & Orbits \\
		\hline
		$\langle g_2^3, g_1\rangle$  & 2 & $(1,0)\ \bullet \circlearrowleft\quad (0,1)\ \bullet \xleftrightarrow{g_1}\bullet\ (1,1)$\\ [1ex]
		& 3 & $(1,0)\ \bullet \xleftrightarrow{g_2^3}\bullet\ (2,0)$\\ [1ex]
		&6 & $(1,0)\ \bullet \xleftrightarrow{g_2^3}\bullet\  (5,0)$\\ [1ex]
		& & $(2,1)\ \bullet \xleftrightarrow{g_2^3}\bullet\   (4,1)\ \bullet \xleftrightarrow{g_1}\bullet\   (1,1)\ \bullet \xleftrightarrow{g_2^3}\bullet  (5,1)$\\ [1ex]
		\hline
		$\langle g_2^2, g_1\rangle$  & 2 &  $(1,0)\ \bullet\xleftrightarrow{g_2^2} \bullet \ (1,1)\ \bullet \xleftrightarrow{g_1}\bullet\ (0,1)$\\ [1ex]
		& 3 & $(1,0)\ \bullet \circlearrowleft \qquad \bullet\ (2,0) \circlearrowleft$\\ [1ex]
		&6 & $(1,0)\ \bullet \xleftrightarrow{g_2^2}\bullet\  (1,1)\   \bullet \xleftrightarrow{g_2^2}
		\bullet\  (4,1)$\\ [1ex]
		& & $(5,1)\ \bullet \xleftrightarrow{g_2^2}\bullet\  (2,1)\   \bullet \xleftrightarrow{g_2^2}
		\bullet\  (5,0)$\\
		\end{tabular}
\end{center}
        \begin{center}
	\begin{tabular}{r|c l}
		$\Sigma_{(1,0)}$    & $d$ & Orbits \\
		\hline
		$\langle g_1\rangle$  &2 & $(1,0)\ \bullet \circlearrowleft\quad (0,1)\ \bullet \xleftrightarrow{g_1}\bullet\ (1,1)$\\ [2ex]
		& 3 &$(1,0)\ \bullet \circlearrowleft \qquad \bullet\ (2,0) \circlearrowleft$\\ [2ex]
		&6 & $(1,0)\ \bullet \circlearrowleft \qquad \bullet\ (5,0) \circlearrowleft$\\ [2ex]
		& & $(1,1)\ \bullet \xleftrightarrow{g_1}\bullet\ (4,1)$
		\qquad $(2,1)\ \bullet \xleftrightarrow{g_1}\bullet\ (5,1)$\\ [2ex]
\end{tabular}
\end{center}

\begin{proposition}
	Let $p$ be a prime number, $p\geq 7$. The number of skew braces with additive group $N=\Z_p \rtimes (C_{6}\times C_2)$ and multiplicative group $G=\Z_p\rtimes (C_6\times C_2)$ is as shown in the following table, where we need $p\equiv 1 \pmod{\frac {12}k}$ for a kernel of size $k$ to occur.
	
	\begin{center}
		\begin{tabular}{|c|c|c|c|c|}
			\hline
			$N \backslash G$ & $\Z_p\times (C_6\times C_2)$ & $\Z_p\rtimes_{6} (C_6\times C_2)$ & $\Z_p\rtimes_{4} (C_6\times C_2)$ & $\Z_p\rtimes_{2} (C_6\times C_2)$ \\
			\hline
			\hline
			$\Z_p\times (C_6\times C_2)$ &1&1&1&1\\
			\hline
			$\Z_p\rtimes_{6} (C_6\times C_2)$ &2&4&2&4\\
			\hline
			$\Z_p\rtimes_{4} (C_6\times C_2)$ &2&2&4&4\\
			\hline
			$\Z_p\rtimes_{2} (C_6\times C_2)$ &2&4&4&8\\
			\hline
			
		\end{tabular}
	\end{center}
\end{proposition}

\subsubsection*{Case $E=A_4$}

There are two conjugacy classes of regular subgroups of $\Hol(A_4)$ isomorphic to $C_6\times C_2$.  

\begin{center}
    \begin{tabular}{|l|l|l|}
    \hline
      Representative& $F_1=\langle a_1,b_1 \rangle$ & $F_2=\langle\  a_2,b_2 \ \rangle$ \\
      \hline
      $\pi_2(F)$ & $\langle \phi_{(1,2,4)}\rangle $ &$\langle \phi_{(1,2)(3,4)},\phi_{(1,3)(2,4)}\rangle$\\
      $\pi_2(F)\subseteq \Sigma_{\sigma}$ & $\forall \sigma$ &
      $\forall \sigma$ \\
      $\Aut(B_{12})$ &$\langle \phi_{(1,2)}, \phi_{(1,2,4)}\rangle$ &$\langle \phi_{(1,2)}, \phi_{(1,2,3)}\rangle$\\
      Orbits in $\Hom(B_{12},\Z_p^*)$ & One for each order & One for each order \\
      \hline
    \end{tabular}
\end{center}    
where 
$$\begin{array}{lcl}
a_1=((1,2,3),\phi_{(1,2,4)}),&& b_1=((1,2)(3,4),\Id) \\
a_2=((1,3,4),\phi_{(1,2)(3,4)}),&& b_2=((1,3)(2,4),\phi_{(1,3)(2,4)}).
\end{array}$$

We have 
$$
 \{s\in S_4 : \Phi_{\phi_s} (F_i)=F_i\}\cap A_4=
\left\{
\begin{array}{l}
\langle (1,2,4)\rangle\  \mbox{ for }  i=1\\
\langle  (1,2,3)\rangle\ \mbox{ for }  i=2.
\end{array}\right.
$$
and in both cases 
$\phi_c(a)=ab$, $\phi_c(b)=a^3$, for $c$ the corresponding cycle of length 3. The action on $\Hom(F,\Z_p^*)$ is given by
$$
(j \bmod d, i\bmod 2)\Phi_{\phi_{c}}(a)=(-1)^i\zeta_d^{j},
\qquad (j \bmod d, i\bmod 2)\Phi_{\phi_c}(b)=\zeta_d^{3j}
$$
and we obtain the following orbits:
\begin{center}
	\begin{tabular}{lll l }
		$d=2$ &  & $(1,0)\ \bullet \xleftrightarrow{} \bullet\  (1,1)\ \bullet \xleftrightarrow{} \bullet\   (0,1)$ \\[1ex]
		$d=3$& &  $(1,0)\ \bullet \circlearrowleft \qquad \bullet\ (2,0) \circlearrowleft$ \\[1ex]
		$d=6$ & & $(1,0)\ \bullet \xleftrightarrow{}\bullet\   (1,1)\ \bullet \xleftrightarrow{}\bullet\   (4,1)$  \\[1ex]
		& &  $(5,0)\ \bullet \xleftrightarrow{}\bullet\   (5,1)\ \bullet \xleftrightarrow{}\bullet\   (2,1)$
	\end{tabular}
\end{center}

\begin{proposition}
	Let $p$ be a prime number, $p\geq 7$. The number of skew braces with additive group $N:=\Z_p \rtimes A_4$ and multiplicative group $G:=\Z_p\rtimes (C_6\times C_2)$ is as shown in the following table,  where we need $p\equiv 1 \pmod{\frac {12}k}$ for a kernel of size $k$ to occur.
	
	\begin{center}
		\begin{tabular}{|c||c|c|c|c|}
			\hline
			$N \backslash G$ & $\Z_p\times (C_6\times C_2)$ & $\Z_p\rtimes_{6} (C_6\times C_2)$ & $\Z_p\rtimes_{4} (C_6\times C_2)$ & $\Z_p\rtimes_{2} (C_6\times C_2)$ \\
			\hline
			\hline
			$\Z_p \times A_4$ &2&2&2&2\\
			\hline
			$\Z_p \rtimes A_4$ &4&4&8&8\\
			\hline
			
		\end{tabular}
	\end{center}
\end{proposition}

\subsubsection*{Case $E=D_{2\cdot 6}$}

Let $E=\langle r,s\rangle$. There are two conjugacy classes of regular subgroups of $\Hol(E)$ isomorphic to $C_6\times C_2$.

\begin{center}
    \begin{tabular}{|l|l|l|}
    \hline
      Representative& $F_1=\langle a_1=(r,\Id),\ b_1=(s,g_1) \rangle$ & $F_2=\langle\  a_2=(r,g_2^4),\ b_2=(s,\Id) \ \rangle$ \\

      \hline
      $\pi_2(F)$ & $\langle g_1 \rangle\simeq C_2 $ &$\langle g_2^4\rangle\simeq C_3$\\
      $\pi_2(F)\subseteq \Sigma_{\sigma}$ & $\forall \sigma$ &
      $\forall \sigma$ \\
      $\Aut(B_{12})$ &$\langle g_2^3,g_1\rangle$ &$\langle g_2^3,g_1\rangle$ \\
      Orbits in $\Hom(B_{12},\Z_p^*)$ & One for each order & One for each order \\
      \hline
    \end{tabular}
\end{center}    

Since 
$$\begin{array}{lll}
\Phi_{g_1}(a_1)=(r^5,\Id)=a_1^5,& &  \Phi_{g_1}(b_1)=(s,g_1)=b_1, \\
\Phi_{g_2^3}(a_1)=(r,\Id)=a_1,& &  \Phi_{g_2^3}(b_1)=(r^3,g_1)=a_1^3 b_1,\\
\Phi_{g_1}(a_2)=(r^5,g_2^2)=a_2^5,& &  \Phi_{g_1}(b_2)=(s,\Id)=b_2, \\
\Phi_{g_2^3}(a_2)=(r,g_2^4)=a_2,& &  \Phi_{g_2^3}(b_2)=(r^3s,\Id)=a_2^3 b_2,\\
\end{array}
$$
there is no difference in computation of orbits for $F_1$ or $F_2$. We obtain
\begin{center}
	\begin{tabular}{lll l }
		$d=2$ &  &  $(0,1)\ \bullet \circlearrowleft\qquad (1,0)\ \bullet \xleftrightarrow{g_2^3}\bullet\  (1,1)$ \\[2ex]
		$d=3$& &  $(1,0)\ \bullet \xleftrightarrow{g_1}\bullet\  (2,0)$  \\[2ex]
		$d=6$ & & $ (2,1)\ \bullet \xleftrightarrow{g_1}\bullet\  (4,1)$  \\[2ex]
		& &  $(1,0)\ \bullet \xleftrightarrow{g_1}\bullet\   (5,0)$ \\
		&&$\quad\ {_{g_2^3}}\Big\updownarrow\qquad\  \Big\updownarrow {_{g_2^3}}$ & \\
		& &  $(1,1)\ \bullet \xleftrightarrow{g_1}\bullet\   (5,1)$ \\
	\end{tabular}
\end{center}
For $\sigma$ of order $6$ with dihedral kernel $g_2^3\not\in\Sigma_{\sigma}$ and in first and last case
orbits split.

\medskip

\begin{proposition}
	Let $p$ be a prime number, $p\geq 7$.
	The number of skew braces with additive group $N:=\Z_p \rtimes D_{2.6}$ and multiplicative group $G:=\Z_p\rtimes (C_6\times C_2)$ is as shown in the following table,
	where we need $p\equiv 1 \pmod{\frac {12}k}$ for a kernel of size $k$ to occur.
	
	\begin{center}
		\begin{tabular}{|c||c|c|c|c|}
			\hline
			$N \backslash G$ & $\Z_p\times (C_6\times C_2)$ & $\Z_p\rtimes_{6} (C_6\times C_2)$ & $\Z_p\rtimes_{4} (C_6\times C_2)$ & $\Z_p\rtimes_{2} (C_6\times C_2)$ \\
			\hline
			\hline
			$\Z_p \times D_{2\cdot 6}$ &2&4&2&4\\
			\hline
			$\Z_p \rtimes_{6}^c D_{2\cdot 6}$ &4&8&4&8\\
			\hline
			$\Z_p \rtimes_{6}^d D_{2\cdot 6}$ &4&12&4&12\\
			\hline
			
		\end{tabular}
	\end{center}
\end{proposition}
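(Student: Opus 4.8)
The plan is to feed the brace $B_{12}$ with additive group $E=D_{2\cdot 6}$ and multiplicative group $F=C_6\times C_2$ into the algorithm of Section~\ref{method}, using the description of $\Aut(D_{2\cdot 6})$, of $\Hom(D_{2\cdot 6},\Z_p^*)$ and of the stabilizers $\Sigma_\sigma$ recorded in Subsection~\ref{D26}, together with the two representatives $F_1=\langle(r,\Id),(s,g_1)\rangle$ and $F_2=\langle(r,g_2^4),(s,\Id)\rangle$ of the conjugacy classes of regular subgroups of $\Hol(D_{2\cdot 6})$ isomorphic to $C_6\times C_2$ already fixed above. Since $\pi_2(F_1)=\langle g_2^4\rangle\simeq C_3$ and $\pi_2(F_2)=\langle g_1\rangle\simeq C_2$ are not conjugate in $\Aut(E)$, the regular subgroups $G(\sigma,\tau),G(\sigma,\tau)'$ obtained from $F_1$ are never conjugate in $\Hol(N)$ to those obtained from $F_2$; by Proposition~\ref{2Gs} every brace of the stated type occurs among them, so I may run the algorithm separately for $F_1$ and $F_2$ and add the two counts.

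For each $F_i$ I would go through Steps~1--4. In Step~1 one uses that $\pi_2(F_1)\simeq C_3$ and $\pi_2(F_2)\simeq C_2$ lie inside every $\Sigma_\sigma$ (the smallest being $\langle g_1,g_2^2\rangle\simeq D_{2\cdot 3}$, attached to the dihedral-kernel $\sigma$), so that $\Hom(B_{12},\Z_p^*)=\Hom(E,\Z_p^*)$, a group of order $4$. In Step~2 a short computation of $\Phi_g$ on the chosen generators of $F_i$, for $g$ running over $\Aut(E)\simeq D_{2\cdot 6}$, shows that in both cases $\Aut(B_{12})=\langle g_2^3,g_1\rangle\simeq V_4$. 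In Step~3 one checks that $g_2^3\in\Aut(B_{12})$ already sends $(-1,-1)$ to $(-1,1)$ while fixing the order-$2$ morphism $(1,-1)$, so the orbits of $\Aut(B_{12})$ on $\Hom(E,\Z_p^*)$ are exactly the three orbits of the full $\Aut(E)$ listed in Subsection~\ref{D26}; these give the three additive structures $\Z_p\times D_{2\cdot 6}$, $\Z_p\rtimes_6^c D_{2\cdot 6}$, $\Z_p\rtimes_6^d D_{2\cdot 6}$ (trivial $\sigma$, cyclic-kernel $\sigma$, dihedral-kernel $\sigma$).

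Step~4 is the core of the count. For the trivial and the cyclic-kernel $\sigma$ one has $\Sigma_\sigma=\Aut(E)$, hence $\Aut(B_{12})\cap\Sigma_\sigma=\langle g_2^3,g_1\rangle$; for the dihedral-kernel $\sigma$ one has $\Sigma_\sigma=\langle g_1,g_2^2\rangle$ and therefore $\Aut(B_{12})\cap\Sigma_\sigma=\langle g_1\rangle$. Writing $\tau=(j\bmod d,\,i\bmod 2)$ for $\tau(a)=\zeta_d^{\,j}$, $\tau(b)=(-1)^i$, and using that $\Phi_{g_1}$ and $\Phi_{g_2^3}$ act on the generators of $F_i$ (identically for $F_1$ and $F_2$) by $a\mapsto a^5,\ b\mapsto b$ and $a\mapsto a,\ b\mapsto a^3b$ respectively, I would draw the orbit diagram for each $d\mid 6$: under $\langle g_2^3,g_1\rangle$ this yields $1,2,1,2$ orbits for $d=1,2,3,6$, while under $\langle g_1\rangle$ it yields $1,3,1,3$ orbits. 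Finally I assemble the table: the number of multiplicative structures above a given additive one equals the number of $\tau$-orbits for trivial $\sigma$ and twice that number for the two nontrivial $\sigma$, and each of these is doubled once more to account for $F_1$ and $F_2$; this gives the rows $(2,4,2,4)$, $(4,8,4,8)$, $(4,12,4,12)$, subject to the usual restriction that a $\tau$ with $|\Ker\tau|=k$, i.e. $d=12/k$, exists only when $d\mid\gcd(6,p-1)$, that is $p\equiv 1\pmod{12/k}$.

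The main obstacle is the bookkeeping in Step~4, and within it the $V_4$-action on the six order-$6$ elements of $\Hom(F,\Z_p^*)$: one must check that under $\langle g_2^3,g_1\rangle$ this splits as $\{(1,0),(1,1),(5,0),(5,1)\}\sqcup\{(2,1),(4,1)\}$ (two orbits), whereas under the smaller group $\langle g_1\rangle$ — which appears precisely because $g_2^3\notin\Sigma_\sigma$ for the dihedral-kernel $\sigma$ — it splits into three orbits of size $2$. This discrepancy is exactly what produces the jump from $8$ to $12$ in the last row. A secondary point needing care is confirming that $\Aut(B_{12})$ is exactly $\langle g_2^3,g_1\rangle$ and no larger, e.g. that $g_2\notin\Aut(B_{12})$, since this enters both the orbit count on additive structures and every intersection $\Aut(B_{12})\cap\Sigma_\sigma$.
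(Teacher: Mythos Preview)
Your proposal is correct and follows essentially the same route as the paper: identify the two representatives $F_1,F_2$, check that both $\pi_2(F_i)$ lie in every $\Sigma_\sigma$, compute $\Aut(B_{12})=\langle g_2^3,g_1\rangle$ (the paper does this via the normalizers of $\pi_2(F_i)$), obtain the three additive orbits via $g_2^3$, and then count $\tau$-orbits under $\langle g_2^3,g_1\rangle$ resp.\ $\langle g_1\rangle$ exactly as the paper does. One small slip: from your own definitions $F_1=\langle(r,\Id),(s,g_1)\rangle$ and $F_2=\langle(r,g_2^4),(s,\Id)\rangle$ one has $\pi_2(F_1)=\langle g_1\rangle\simeq C_2$ and $\pi_2(F_2)=\langle g_2^4\rangle\simeq C_3$, i.e.\ the labels are swapped in your first paragraph; since both $F_i$ yield identical brace-automorphism groups and identical orbit counts, this does not affect any subsequent step or the final table.
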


\subsubsection*{Case $E=\Dic_{12}$}

Let $E=\langle x,y\rangle$. There are two conjugation classes of regular subgroups of $\Hol(E)$ isomorphic to $C_6\times C_2$.

\begin{center}
    \begin{tabular}{|l|l|l|}
    \hline
      Representative& $F_1=\langle a_1=(xy,g_1),\ \,b_1=(y^2,\Id)\rangle$ & $F_2=\langle a_2=(xy^2,\Id),\ b_2=(y,g_1)\rangle$ \\    
      \hline
      $\pi_2(F)$ & $\langle g_1 \rangle $ &$\langle g_1\rangle$\\
      $\pi_2(F)\subseteq \Sigma_{\sigma}$ & $\sigma=(1,1),(2,1)$  &
      $\sigma=(1,1),(2,1)$\\
      $\Aut(B_{12})$ &$\langle g_1,g_2^3\rangle$ &$\langle g_1,g_2^3\rangle$\\
      Orbits in $\Hom(B_{12},\Z_p^*)$ & One for each order & One for each order \\
      \hline
    \end{tabular}
\end{center}    

 We have
$$\begin{array}{lll}
\Phi_{g_1}(a_1)=(x^2y^3, g_1)=a_1^5b_1, & &
\Phi_{g_1}(b_1)=(y^2,\Id)=b_1\\
\Phi_{g_2^3}(a_1)=(xy^3,g_1)=a_1b_1  &&  \Phi_{g_2^3}(b_1)=(y^2,\Id)=b_1\\
\Phi_{g_1}(a_2)= (x^2y^2,\Id)=a_2^5 & &\Phi_{g_1}(b_2)=(y^3,g_1)=a_2^3 b_2\\
\Phi_{g_2^3}(a_2)=(xy^2,\Id)=a_2 &&    \Phi_{g_2^3}(b_2)=(y^3,g_1)=a_2^3b_2.\\
\end{array}
$$
and the following orbits
\begin{center}
	\begin{tabular}{lll l r}
		$d=2$ &  &  $(1,0)\ \bullet \circlearrowleft\qquad (0,1)\ \bullet \xleftrightarrow[g_1]{g_2^3}\bullet\  (1,1)$ && $F_1$\\[2ex]
		&  &  $(0,1)\ \bullet \circlearrowleft\qquad (1,0)\ \bullet \xleftrightarrow[g_1]{g_2^3}\bullet\  (2,0)$ & &$F_2$\\[2ex]
		$d=3$& &  $(1,0)\ \bullet \xleftrightarrow{g_1}\bullet\  (2,0)$ & &$F_1,F_2$\\[2ex]
		$d=6$ & & $ (1,0)\ \bullet \xleftrightarrow{g_1}\bullet\  (5,0)$ &
		$(1,1)\ \bullet \xleftrightarrow{g_1}\bullet\   (2,1)$ &$F_1$  \\[0.1ex]
		&&&$\quad\ {_{g_2^3}}\Big\updownarrow\qquad\  \Big\updownarrow {_{g_2^3}}$  \\
		& &&  $(4,1)\ \bullet \xleftrightarrow{g_1}\bullet\   (5,1)$ \\[2ex]
		& & $ (2,1)\ \bullet \xleftrightarrow{g_1}\bullet\  (4,1)$ &
		$(1,0)\ \bullet \xleftrightarrow{g_1}\bullet\   (5,1)$ &$F_2$  \\[0.1ex]
		&&&$\quad\ {_{g_2^3}}\Big\updownarrow\qquad\  \Big\updownarrow {_{g_2^3}}$  \\
		& &&  $(1,1)\ \bullet \xleftrightarrow{g_1}\bullet\   (5,0)$ \\
	\end{tabular}
\end{center}
Therefore, both $F_1$ and $F_2$ give rise to the same number of multiplicative structures.

\begin{proposition}
	Let $p$ be a prime number, $p\geq 7$.
	The number of skew braces with additive group $N=\Z_p \rtimes \Dic_{12}$ and multiplicative group $G=\Z_p\rtimes (C_6\times C_2)$ is as shown in the following table,
	where we need $p\equiv 1 \pmod{\frac {12}k}$ for a kernel of size $k$ to occur.
	
	\begin{center}
		\begin{tabular}{|c||c|c|c|c|}
			\hline
			$N \backslash G$ & $\Z_p\times (C_6\times C_2)$ & $\Z_p\rtimes_{6} (C_6\times C_2)$ & $\Z_p\rtimes_{4} (C_6\times C_2)$ & $\Z_p\rtimes_{2} (C_6\times C_2)$ \\
			\hline
			\hline
			$\Z_p\times \Dic_{12}$ &2&4&2&4\\
			\hline
			$\Z_p\rtimes_{6} \Dic_{12}$ &4&8&4&8\\
			\hline
		\end{tabular}
	\end{center}
\end{proposition}

\subsection{$F=A_4$}
According to the table in Section \ref{twelve} and Theorem \ref{npgivesn}, for a brace of size 12 with multiplicative group  $C_{12}$, the additive group can be, the additive group is either  $C_6\times C_2$ or $A_4$. In both sub-cases the group
$\Hom(F,\Z_p^*)$ is as in \ref{A4} so that its elements are $\tau_j$,  with $j=0,1,2$, defined by $\tau_j(1,2,3)=\zeta_3^j$ and $\tau_j$ trivial on elements of order 2.

\subsubsection*{Case $E=C_6 \times C_2$}

Let $E=\langle a\rangle \times \langle b \rangle$. There is 
one conjugacy class of regular subgroups of $\Hol(E)$ isomorphic to $A_4$.

\begin{center}
    \begin{tabular}{|l|l|}
    \hline
      Representative& $F=\langle (a,g_2^2),(b, \Id)\rangle$ \\
      \hline
      $\pi_2(F)$& $ \langle g_2^2\rangle $\\
      $\pi_2(F)\subseteq \Sigma_{\sigma}$ &  $\sigma$ of order $1$ or $3$ \\
      Orbits in $\Hom(B_{12},\Z_p^*)$ & One for each order\\
      \hline
    \end{tabular}
\end{center}    
Since $\Phi_{g_1g_2}(a,g_2^2)=(a^2b,g_2^4)=(a,g_2^2)^2$ and 
$\Phi_{g_1g_2}(b,\Id)=(a^3,\Id)\in F
$
we see that $g_1g_2\in\Aut(B_{12})$. For $\sigma=(1,0)$ of order $3$, we have $(1,0)g_1g_2=(1,0)g_2=(2,0)$.

On the other hand,
$$
\Phi_{g_2^2}(a)=(a^2b, g_2^2)\not\in F,\quad \Phi_{g_1}(a)=(ab,g_2^4)\not\in F
$$
so that none of the elements of $\Sigma_{\sigma}=\langle g_2^2, g_1\rangle$ are brace automorphisms.
The action on $\Hom(F,\Z_p^*)$ is just given by the identity and orbits consist on single elements. For a non trivial $\sigma$, $\tau_1$ and $\tau_2$ give different multiplicative structures.

\begin{proposition}
	Let $p$ be a prime number, $p\geq 7$.
	The number of braces with additive group $N:=\Z_p \rtimes (C_6\times C_2)$ and multiplicative group $G:=\Z_p\rtimes A_4$ is as shown in the following table, where we need $p\equiv 1 \pmod{\frac {12}k}$ for a kernel of size $k$ to occur.
	
	\begin{center}
		\begin{tabular}{|c||c|c|}
			\hline
			$N \backslash G$ & $\Z_p\times A_4$ & $\Z_p\rtimes_{4} A_4 $ \\
			\hline
			\hline
			$\Z_p\times (C_6\times C_2)$ & 1&1 \\
			\hline
		\end{tabular}
	\end{center}
\end{proposition}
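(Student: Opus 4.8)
The plan is to apply the four-step procedure of Section~\ref{method} to the brace $B_{12}$ with additive group $E=C_6\times C_2=\langle a\rangle\times\langle b\rangle$ and multiplicative group the unique (up to conjugacy) regular subgroup $F=\langle(a,g_2^2),(b,\Id)\rangle$ of $\Hol(E)$ isomorphic to $A_4$, whose Klein subgroup is $K=\langle(b,\Id),(a^3,\Id)\rangle$. I use the notation of Section~\ref{C62} for $\Aut(E)\simeq D_{2\cdot6}=\langle g_1,g_2\rangle$, for $\Hom(E,\Z_p^*)$ and for the stabilizers $\Sigma_\sigma$, and that of Section~\ref{A4} for $\Hom(F,\Z_p^*)=\{\tau_0,\tau_1,\tau_2\}$, where $\tau_j$ is trivial on $K$ and $\tau_j(a,g_2^2)=\zeta_3^{j}$. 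First I would carry out Step~1: since $\pi_2(F)=\langle g_2^2\rangle$ has order~$3$, and among the stabilizers of Section~\ref{C62} only those attached to $\sigma$ of order $1$ and $3$ contain an element of order~$3$, the admissible homomorphisms are the trivial one and, when $3\mid p-1$, the two of order~$3$, which lie in a single $\Aut(E)$-orbit. This produces exactly the two additive structures $\Z_p\times(C_6\times C_2)$ and $\Z_p\rtimes_4(C_6\times C_2)$, the two rows of the table.

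Next I would carry out Step~2, a finite check over $D_{2\cdot6}$ of which $g\in\Aut(E)$ satisfy $\Phi_g(F)=F$ and of the induced action $\tau\mapsto\tau\Phi_g$ on $\Hom(F,\Z_p^*)$. The facts to extract are: (i) $g_1g_2\in\Aut(B_{12})$, with $\Phi_{g_1g_2}(a,g_2^2)=(a,g_2^2)^2$ and $\Phi_{g_1g_2}(b,\Id)=(a^3,\Id)\in F$, so that on $\Hom(F,\Z_p^*)$ it fixes $\tau_0$ and transposes $\tau_1,\tau_2$ (as $\tau_j((a,g_2^2)^2)=\zeta_3^{2j}$); (ii) for a $\sigma$ of order~$3$, for which $\Sigma_\sigma=\langle g_2^2,g_1\rangle\simeq S_3$, the three reflections of $\Sigma_\sigma$ do not preserve $F$ --- already $\Phi_{g_1}(a,g_2^2)=(ab,g_2^4)\notin F$ --- so $\Aut(B_{12})\cap\Sigma_\sigma\subseteq\langle g_2^2\rangle$, and $\Phi_{g_2^2}$ moves $(a,g_2^2)$ within its own coset modulo $K=\ker\tau_j$, hence fixes every $\tau_j$; thus $\Aut(B_{12})\cap\Sigma_\sigma$ acts trivially on $\Hom(F,\Z_p^*)$ in that case. (One checks moreover $\Aut(B_{12})=\langle g_2^2,g_1g_2\rangle\simeq S_3$, though only the facts above are needed.)

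Steps~3 and~4 then yield the table. For $\sigma=1$ one has $\Aut(B_{12})\cap\Sigma_1=\Aut(B_{12})\ni g_1g_2$, so the orbits on $\{\tau_0,\tau_1,\tau_2\}$ are $\{\tau_0\}$ and $\{\tau_1,\tau_2\}$; since $\sigma$ is trivial the number of multiplicative structures equals the number of orbits, one with $G\simeq\Z_p\times A_4$ and one with $G\simeq\Z_p\rtimes_4 A_4$, giving the first row $(1,1)$. For $\sigma$ of order~$3$, so $N\simeq\Z_p\rtimes_4(C_6\times C_2)$, the trivial action of $\Aut(B_{12})\cap\Sigma_\sigma$ leaves the three $\tau_j$ in distinct orbits, and doubling (as $\sigma\neq1$) produces $2$ braces with $G\simeq\Z_p\times A_4$ (from $\tau_0$) and $2+2=4$ braces with $G\simeq\Z_p\rtimes_4 A_4$ (from $\tau_1$ and $\tau_2$), giving the second row $(2,4)$.

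The step I expect to be the main obstacle is the bookkeeping in Step~2: one must rule out that some brace automorphism compatible with a $\sigma$ of order~$3$ transposes $\tau_1$ and $\tau_2$. This comes down to verifying, by a short computation in $\Hol(E)$, that the three reflections of $\Sigma_\sigma\simeq S_3$ carry the generator $(a,g_2^2)$ out of $F$, so that only $\langle g_2^2\rangle$ survives inside $\Sigma_\sigma$, together with the observation that any brace automorphism preserving the $K$-coset of $(a,g_2^2)$ automatically fixes all of $\tau_0,\tau_1,\tau_2$ (each $\tau_j$ being trivial on $K$); the genuine transposition of $\tau_1$ and $\tau_2$, realized by $g_1g_2$, then belongs to $\Sigma_\sigma$ only when $\sigma$ is trivial.
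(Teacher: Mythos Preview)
Your proof is correct and follows the same four-step algorithm as the paper, with the same representative $F=\langle(a,g_2^2),(b,\Id)\rangle$ and the same orbit analysis. In fact your handling of Step~2 is more accurate than the paper's: the paper asserts $\Phi_{g_2^2}((a,g_2^2))=(a^2b,g_2^2)\notin F$ and concludes that no nontrivial element of $\Sigma_\sigma$ is a brace automorphism, whereas the correct value is $\Phi_{g_2^2}((a,g_2^2))=(ab,g_2^2)\in F$; you rightly observe that $g_2^2$ \emph{does} preserve $F$ but moves $(a,g_2^2)$ only within its $K$-coset and hence fixes every $\tau_j$, so the conclusion that $\Aut(B_{12})\cap\Sigma_\sigma$ acts trivially on $\Hom(F,\Z_p^*)$ survives. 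The only looseness in your write-up is claiming that all three reflections of $\Sigma_\sigma$ fail to preserve $F$ while checking only $g_1$; once $g_2^2\in\Aut(B_{12})$ is established this follows immediately, since $g_1g_2^{2k}\in\Aut(B_{12})$ together with $g_2^{2k}\in\Aut(B_{12})$ would force $g_1\in\Aut(B_{12})$.
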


\subsubsection*{Case $E=A_4$}

There are four  conjugacy classes of regular subgroups of $\Hol(A_4)$ isomorphic to $A_4$. If we write, $\rho=(1,2,3)$ and $\mu=(1,2)(3,4)$, we have:
\begin{center}
    \begin{tabular}{|l|l|l|l|}
    
    \hline
       $F_1=\langle (\rho,\Id),(\mu,\Id)\rangle$ &
     $ F_2=\langle(\rho,\phi_{\rho}),(\mu,\Id)\rangle$&
      $F_3=\langle(\rho,\phi_{\rho^{-1}}),(\mu,\phi_{\mu})\rangle$&
      $F_4=\langle(\rho,\phi_{\rho^{-1}}),(\mu,\phi_{\mu}) \rangle$\\
      \hline
      $\pi_2(F_1)=\{\Id\}$&
      $\pi_2(F_2)=\langle \phi_{\rho}\rangle\simeq C_3$ &
      $\pi_2(F_3)\simeq A_4$&
      $\pi_2(F_4)\simeq A_4$\\
      $\pi_2(F_1)\subseteq \Sigma_{\sigma}$ $\forall \sigma$ & 
      $\pi_2(F_2)\subseteq \Sigma_{\sigma}$ $\forall \sigma$ &
      $\pi_2(F_3)\subseteq \Sigma_{\sigma}$ $\forall \sigma$ &
      $\pi_2(F_4)\subseteq \Sigma_{\sigma}$ $\forall \sigma$
       \\
      \hline
    \end{tabular}
\end{center}    

Since
$$
\Phi_{\phi_{(1,2)}}(\rho, \Id)=(\rho,\Id)^2,\quad 
\Phi_{\phi_{(1,2)}}(\rho, \phi_{\rho})=(\rho, \phi_{\rho})^2,\quad
\Phi_{\phi_{(1,2)}}(\rho, \phi_{\rho^{-1}})=(\rho, \phi_{\rho^{-1}})^2,
$$
and $(\mu, \Id)$ and $(\mu,\phi_{\mu})$ are invariant under $\Phi_{\phi_{(1,2)}}$,
we have $\Phi_{\phi_{(1,2)}}(F_i)=F_i$ for all $i$ and elements of order 3 of $\Hom(E,\Z_p)^*$ are in the same orbit. As for morphisms $\tau\in \Hom(F,\Z_p^*)$, since $\Sigma_{\sigma}$ stabilizes them, the action
gives orbits of a single element, so that we have two orbits in order 3.

\begin{proposition}
	Let $p$ be a prime number, $p\geq 7$.
	The number of skew braces with additive group $N=\Z_p \rtimes A_4$ and multiplicative group $G=\Z_p\rtimes A_4$ is as shown in the following table, where we need $p\equiv 1 \pmod{3}$ for the group
	$\Z_p\rtimes_{4} A_4$ to occur.
	
	\begin{center}
		\begin{tabular}{|c||c|c|}
			\hline
			$N \backslash G$ & $\Z_p\times A_4$ & $\Z_p\rtimes_{4} A_4 $ \\
			\hline
			\hline
			$\Z_p\times A_4$ & 4&4 \\ \hline $\Z_p\rtimes_{4} A_4 $ &8&16\\ \hline
		\end{tabular}
	\end{center}
\end{proposition}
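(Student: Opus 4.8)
The plan is to assemble, via Steps 1--4 of the algorithm of Section~\ref{method}, the computations made above for the input $(E,F)=(A_4,A_4)$, and then to sum over the four conjugacy classes $F_1,\dots,F_4$ of regular subgroups of $\Hol(A_4)$ isomorphic to $A_4$. The facts already in place are: $\Hom(B_{12},\Z_p^*)=\Hom(A_4,\Z_p^*)$ (Step 1, since $\pi_2(F_i)\subseteq A_4=\Sigma_\sigma$ for all $\sigma$), which is trivial unless $p\equiv1\pmod 3$ and otherwise equals $\{1,\sigma_1,\sigma_2\}\simeq\Z_3$ with $\Sigma_{\sigma_j}=A_4$; and $\phi_{(1,2)}\in\Aut(B_{12})$ for every $F_i$ (Step 2, from the displayed computations), so that $\sigma_1$ and $\sigma_2$ lie in a single $\Aut(B_{12})$-orbit and each $F_i$ admits exactly two additive structures, $\Z_p\times A_4$ (from $\sigma=1$) and --- only when $3\mid p-1$ --- $\Z_p\rtimes_4 A_4$ (from $\sigma_1$).

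For Step 4 I would handle the two additive structures separately, using that $\Hom(F,\Z_p^*)$ (with nontrivial elements $\tau_1,\tau_2$ when $3\mid p-1$) factors through $F^{\mathrm{ab}}\simeq C_3$, so a brace automorphism acts on it either trivially or by inversion, according to its action on $F^{\mathrm{ab}}$. For $\sigma=1$ the acting group is $\Aut(B_{12})\ni\phi_{(1,2)}$, and the displayed computations show that $\Phi_{\phi_{(1,2)}}$ carries the order-$3$ generator of each $F_i$ to its square, hence inverts $F^{\mathrm{ab}}$; thus $\tau_1,\tau_2$ are identified, there are two orbits $\{1\}$ and $\{\tau_1,\tau_2\}$, and since $\sigma$ is trivial there is no doubling: $\sigma=1$ yields, for each $F_i$, one brace with multiplicative group $\Z_p\times A_4$ and one with $\Z_p\rtimes_4 A_4$. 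For $\sigma=\sigma_1$ the acting group is $\Aut(B_{12})\cap\Sigma_{\sigma_1}=\Aut(B_{12})\cap A_4$, whose elements induce inner automorphisms of $F$ --- immediate for $F_1=\{(a,\Id):a\in A_4\}$, and for $F_2,F_3,F_4$ by a short check of $\Phi_g$ on the generators --- hence act trivially on $F^{\mathrm{ab}}$; so $\Hom(F,\Z_p^*)$ splits into three singleton orbits, and by Proposition~\ref{2Gs} each yields the two non-conjugate subgroups $G(\sigma_1,\tau)$ and $G(\sigma_1,\tau)'$: thus $\sigma=\sigma_1$ yields, for each $F_i$, two braces with multiplicative group $\Z_p\times A_4$ (from $\tau=1$) and four with $\Z_p\rtimes_4 A_4$ (from $\tau_1,\tau_2$).

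To conclude I would note that $F_1,\dots,F_4$ are pairwise non-conjugate in $\Hol(A_4)$ --- the $\pi_2(F_i)$ are non-conjugate except for $F_3,F_4$, which are distinguished as in the text by the shape of their order-$3$ elements --- so braces coming from distinct $F_i$ are non-isomorphic, and adding $1+1$ per $F_i$ for $\sigma=1$ and $2+4$ per $F_i$ for $\sigma=\sigma_1$ fills the table with $4,4,8,16$, the last three columns occurring only for $p\equiv1\pmod3$. The step I expect to be the main obstacle is the Step~4 verification that $\Aut(B_{12})\cap A_4$ acts trivially on $\Hom(F,\Z_p^*)$, equivalently that the sign map $\Aut(B_{12})\to\Aut(F^{\mathrm{ab}})=\{\pm1\}$ has kernel exactly $\Aut(B_{12})\cap A_4$: this is transparent for $F_1$, but for $F_2,F_3,F_4$ one must either exhibit generators of $\Aut(B_{12})$ together with their action on the order-$3$ generator of $F_i$, or argue structurally that $\Aut(B_{12})$, which contains the odd permutation $\phi_{(1,2)}$ mapping to $-1$, has a unique subgroup of index $2$. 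The remaining bookkeeping is routine.
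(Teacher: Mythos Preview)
Your proposal is correct and follows essentially the same route as the paper: run Steps~1--4 of the algorithm for each of the four $F_i$, use $\phi_{(1,2)}\in\Aut(B_{12})$ to collapse the two nontrivial $\sigma$'s (and, for $\sigma=1$, the two nontrivial $\tau$'s), and for nontrivial $\sigma$ use that $\Aut(B_{12})\cap\Sigma_{\sigma_1}=\Aut(B_{12})\cap A_4$ fixes each $\tau$.  Your write-up is in fact more explicit than the paper's: the paper compresses Step~4 into the single sentence ``since $\Sigma_\sigma$ stabilizes them, the action gives orbits of a single element'', which literally only covers the case $\sigma\neq1$ (for $\sigma=1$ one has $\Sigma_1=S_4$, and $\phi_{(1,2)}$ does \emph{not} fix the $\tau$'s but swaps them --- precisely what you use to get the entry $4$ rather than $8$ in the first row).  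The verification you flag as the main obstacle --- that $\Aut(B_{12})\cap A_4$ acts trivially on $F_i^{\mathrm{ab}}$ --- is indeed the only point requiring work; your structural argument goes through once one observes that $\Aut(B_{12})$ equals $S_4$ for $i=1,3,4$ and $\langle\phi_\rho,\phi_{(1,2)}\rangle\simeq S_3$ for $i=2$, each of which has a unique index-$2$ subgroup, necessarily $\Aut(B_{12})\cap A_4$.
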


\subsection{$F=D_{2\cdot 6}$}

  From the table in Section \ref{twelve} we know that for a brace of size $12$ with multiplicative group $D_{2\cdot6}$, the additive group can be  any  groups except $A_4$. Moreover, in all the sub-cases the group $\Hom(F,\Z_p^*)$ is as in \ref{D26}, so $\tau$ identifies with any of the pairs $(\pm1,\pm1)$. Let us call $\tau_1=(1,-1)$, $\tau_2=(-1,-1)$ and $\tau_3=(-1,1)$, so that $\tau_1$ has kernel isomorphic to $C_6$ and $\tau_2$, $\tau_3$ have kernel isomorphic to $D_{2\cdot3}$.

\subsubsection*{Case $E=C_{12}$}

Let $E=\langle c\rangle$. There are two conjugacy classes of regular subgroups of $\Hol(E)$ isomorphic to $D_{2\cdot6}$. 

\begin{center}
    \begin{tabular}{|l|l|l|}
    \hline
      Representative& $F_1=\langle  r_1=(c^2,\mathrm{Id}),s_1=(c,g_{11}) \rangle$ & $F_2=\langle\  r_2=(c,g_7),s_2=(c^3,g_{11}) \ \rangle$ \\           \hline
      $\pi_2(F)$ & $\langle g_{11} \rangle $ &$\Aut(E)$\\
      $\pi_2(F)\subseteq \Sigma_{\sigma}$ &  $\sigma=(1,1),(2,1)$ &
      $\sigma=(1,1),(2,1)$ \\
      $\Aut(B_{12})$ &$\Aut(E)$ & $\langle g_5\rangle$ \\
      Orbits in $\Hom(B_{12},\Z_p^*)$ & One for each order & One for each order \\
      \hline
    \end{tabular}
\end{center}    

Since 
$$\Phi_{g_7}(r_1)=(c^2,\Id)=r_1,\   \Phi_{g_7}(s_1)=(c^7,g_{11})=r_1^3s_1,$$
for $F_1$ we have $\tau_2\Phi_{g_7}=\tau_3$. Therefore the action of 
$\Aut(E)=\Aut(B_{12})\cap \Sigma_{\sigma}$ on $\Hom(F_1,\Z_p^
*)$ gives two orbits of order $2$, one with cyclic kernel and  one with dihedral kernel. If we now take $F_2$, we need to consider the action of $\langle g_5\rangle$ on $\Hom(F,\Z_p^*)$. Since
$$\Phi_{g_5}(r_2)=r_2^5,\   \Phi_{g_5}(s_2)=s_2,$$
we have $\tau_2\Phi_{g_5}=\tau_2$ and in order 2 we have one orbit with cyclic kernel and two orbits with dihedral kernel.

\begin{proposition}
	Let $p$ be a prime number, $p\geq 7$.
	The number of braces with additive group $N:=\Z_p \rtimes C_{12}$ and multiplicative group $G:=\Z_p\rtimes D_{2\cdot 6}$ is as shown in the following table.
	
	\begin{center}
		\begin{tabular}{|c||c|c|c|}
			\hline
			$N \backslash G$ & $\Z_p\times D_{2\cdot 6}$ & $\Z_p\rtimes_6^c D_{2\cdot 6} $ & $\Z_p\rtimes_6^d D_{2\cdot 6}$ \\
			\hline
			\hline
			$\Z_p\times C_{12}$ & 2 & 2 & 3 \\
			\hline
			$\Z_p\rtimes_6 C_{12} $ & 4 & 4 & 6 \\
			\hline
		\end{tabular}
	\end{center}
\end{proposition}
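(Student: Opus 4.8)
The plan is to apply Proposition~\ref{2Gs} together with the four-step procedure of Section~\ref{method}, feeding in the two braces of size $12$ with additive group $E=C_{12}$ and multiplicative group $F\simeq D_{2\cdot 6}$; these correspond to the conjugacy classes of the regular subgroups $F_1$ and $F_2$ of $\Hol(C_{12})$ exhibited above. Every brace of size $12p$ whose additive group is a semidirect product of $\Z_p$ by $C_{12}$ and whose multiplicative group is a semidirect product of $\Z_p$ by $D_{2\cdot 6}$ is, up to isomorphism, one of the groups $G(\sigma,\tau)$ or $G(\sigma,\tau)'$ attached to $F_1$ or $F_2$, so the problem reduces to counting the relevant conjugacy classes in $\Hol(\Z_p\rtimes_\sigma C_{12})$, and the table entries are obtained by summing the contributions of $F_1$ and $F_2$.

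First I would run Step~1: since $\pi_2(F_1)=\langle g_{11}\rangle$ and $\pi_2(F_2)=\Aut(C_{12})$, the stabiliser tables in~\ref{ciclic} force $\pi_2(F_i)\subseteq\Sigma_\sigma$ only for $\sigma=(d,1)$ with $d\in\{1,2\}$, so $\Hom(B_{12},\Z_p^*)=\{(1,1),(2,1)\}$ for both $F_1$ and $F_2$, independently of $p$. For Step~2, evaluating $\Phi_g$ on the chosen generators of $F_i$ for the three nontrivial $g\in\Aut(C_{12})$ gives $\Aut(B_{12})=\Aut(C_{12})$ for $F_1$ and $\Aut(B_{12})=\langle g_5\rangle$ for $F_2$. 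For Step~3, note that $\sigma=(2,1)$ sends $c$ to $-1$, which every automorphism $c\mapsto c^i$ of $C_{12}$ fixes (all units are odd), so in both cases the action of $\Aut(B_{12})$ on $\Hom(B_{12},\Z_p^*)$ has exactly the orbits $\{(1,1)\}$ and $\{(2,1)\}$; the first gives the additive group $\Z_p\times C_{12}$ and the second the additive group $\Z_p\rtimes_6 C_{12}$ with cyclic kernel, which are the two rows of the table.

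For Step~4 I would, for each of these two representatives $\sigma$, let $A_\sigma=\Aut(B_{12})\cap\Sigma_\sigma$ act on $\Hom(F,\Z_p^*)\simeq\mu_2\times\mu_2$; since $\Sigma_{(1,1)}=\Sigma_{(2,1)}=\Aut(C_{12})$ one has $A_\sigma=\Aut(B_{12})$ in both cases. Writing $\tau_0$ for the trivial morphism and keeping $\tau_1,\tau_2,\tau_3$ as introduced at the beginning of this subsection, a computation of $\tau\Phi_g$ on the generators of $F_i$ shows that for $F_1$ one has $\tau_2\Phi_{g_7}=\tau_3$, so the orbits are $\{\tau_0\}$, $\{\tau_1\}$, $\{\tau_2,\tau_3\}$, i.e. one orbit for each isomorphism class of kernel; whereas for $F_2$ the smaller group $\langle g_5\rangle$ fixes both $\tau_2$ and $\tau_3$, giving the four singleton orbits, i.e. one orbit each for trivial kernel and $C_6$ kernel and two orbits for $D_{2\cdot 3}$ kernel. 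Finally I would assemble: for $\sigma=1$ each orbit contributes a single isomorphism class (a semidirect product), so summing $F_1$ and $F_2$ gives $1+1$, $1+1$, $1+2$ for $G$ equal to $\Z_p\times D_{2\cdot 6}$, $\Z_p\rtimes_6^c D_{2\cdot 6}$, $\Z_p\rtimes_6^d D_{2\cdot 6}$; for $\sigma=(2,1)$ each orbit contributes two non-conjugate classes $G(\sigma,\tau)$ and $G(\sigma,\tau)'$ by Proposition~\ref{2Gs}, doubling the counts to $4$, $4$, $6$. This reproduces the stated table.

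The main obstacle is the explicit conjugacy bookkeeping in Steps~2 and~4: one must evaluate $\Phi_g(F_i)$ and $\tau\Phi_g$ on the chosen generators of $F_1$ and $F_2$ carefully enough both to pin down $\Aut(B_{12})$ and, above all, to decide whether $\tau_2$ and $\tau_3$ lie in the same $\Aut(B_{12})$-orbit. This is precisely where $F_1$ and $F_2$ diverge, and it accounts for the entries $3$ and $6$ in the last column; the remainder is a routine run of the algorithm.
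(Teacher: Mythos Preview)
Your proposal is correct and follows essentially the same approach as the paper: you run the four-step algorithm on the two regular subgroups $F_1,F_2$ of $\Hol(C_{12})$, obtain $\Aut(B_{12})=\Aut(C_{12})$ for $F_1$ and $\langle g_5\rangle$ for $F_2$, and use $\tau_2\Phi_{g_7}=\tau_3$ for $F_1$ versus $\tau_2\not\sim\tau_3$ for $F_2$ to get the orbit counts, then apply the doubling for $\sigma\neq 1$. Your observation that $\Phi_{g_5}$ actually fixes each $\tau_i$ for $F_2$ is slightly sharper than what the paper states (which only records $\tau_2\Phi_{g_5}\neq\tau_3$), but the conclusion is the same.
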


\subsubsection*{Case $E=C_6\times C_2$}

Let $E=\langle a\rangle \times \langle b \rangle$. There is 
one conjugacy class of regular subgroups of $\Hol(E)$ isomorphic to $D_{2\cdot 6}$.

\begin{center}
    \begin{tabular}{|l|l|}
    \hline
      Representative& $F=\langle r=(a,\mathrm{Id}),s=(b,g_2^3)\rangle$ \\
      \hline
      $\pi_2(F)$& $ \langle g_2^3\rangle $\\
      $\pi_2(F)\subseteq \Sigma_{\sigma}$ &  $\sigma$ of order $1$ or $2$ \\
      $\Aut(B_{12})$ & $\langle g_2^3,g_1g_2^2\rangle$\\
      Orbits in $\Hom(B_{12},\Z_p^*)$ & 1 of order 1, 2 of order 2\\
      \hline
    \end{tabular}
\end{center}    
Note that in order $2$,  the orbit of $(1,0)\in \Hom(E,\Z_p^*)$ under the action of $\Aut(E)$ gets disconnected when we consider the action of $\Aut(B_{12})$. We have $(0,1)$ as fixed point and $(1,0)=(1,1)g_1g_2^2$.

Regarding the action on $\Hom(F,\Z_p)^*$,
we have $\Aut(B_{12})\cap \Sigma_{(0,1)}=\Aut(B_{12})$ and $\tau_2=\tau_3\Phi_{g_1g_2^2}$, so that there is one orbit with dihedral kernel. In the other case of non trivial $\sigma$, we have
$\Aut(B_{12})\cap \Sigma_{(1,0)}=\langle g_2^3\rangle$ and $\tau_2, \tau_3$
are fixed points. Therefore, we have 2 orbits with dihedral kernel.

\begin{proposition}
	Let $p$ be a prime number, $p\geq 7$.
	The number of braces with additive group $N=\Z_p \rtimes C_6\times C_2$ and multiplicative group $G=\Z_p\rtimes D_{2\cdot 6}$ is as shown in the following table.
	
	\begin{center}
		\begin{tabular}{|c||c|c|c|}
			\hline
			$N \backslash G$ & $\Z_p\times D_{2\cdot 6}$ & $\Z_p\rtimes_6^c D_{2\cdot 6} $ & $\Z_p\rtimes_6^d D_{2\cdot 6}$ \\
			\hline
			\hline
			$\Z_p\times (C_6\times C_2)$ & 1 & 1 & 1\\
			\hline
			$\Z_p\rtimes_6 (C_6\times C_2) $ & 4 & 4 & 6 \\
			\hline
		\end{tabular}
	\end{center}
\end{proposition}

\subsubsection*{Case $E=D_{2\cdot6}$}

Let $E=\langle r,s\rangle$. 
There are four conjugacy classes of regular subgroups of $\mathrm{Hol}(E)$ isomorphic to $D_{2\cdot6}$. 

\begin{center}
    \begin{tabular}{|l|l|l|l|}
    
    \hline
       $F_1=\langle (r,\Id),(s,\Id)\rangle$ &
     $ F_2=\langle(rs,g_1),(s,\mathrm{Id})\rangle$&
      $F_3=\langle(r,g_2^4),(s,g_1)\rangle$&
      $F_4=\langle(s,g_2^2),(r^3s,g_1)\rangle$\\
      \hline
      $\pi_2(F_1)=\{\Id\}$&$\pi_2(F_2)=\langle g_1\rangle$ &
       $\pi_2(F_3)=\langle g_1,g_2^2\rangle$&
        $\pi_2(F_4)=\langle g_1,g_2^2\rangle$\\
      $\pi_2(F_1)\subseteq \Sigma_{\sigma}$ $\forall \sigma$ & 
      $\pi_2(F_2)\subseteq \Sigma_{\sigma}$ $\forall \sigma$ &
      $\pi_2(F_3)\subseteq \Sigma_{\sigma}$ $\forall \sigma$ &
      $\pi_2(F_4)\subseteq \Sigma_{\sigma}$ $\forall \sigma$
       \\
       $\Aut(B_{12})= \Aut(E)$&
       $\Aut(B_{12})=\langle g_1\rangle$ &$\Aut(B_{12})= \Aut(E)$&
       $\Aut(B_{12})=\langle g_1g_2^2\rangle$\\
       3 orbits & 4 orbits & 3 orbits & 4 orbits\\
      \hline
    \end{tabular}
\end{center}    

When we examine the action of $\Aut(B_{12})\cap \Sigma_{\sigma}$ on $\Hom(F_i,\Z_p^*),$ for every possible $\sigma$, 
we see that we have $\tau_2$ and $\tau_3$ in the same orbit only for $i=1,3$ and $\Sigma_{\sigma}=\Aut(E)$.

\begin{proposition}
	Let $p$ be a prime number, $p\geq 7$.
	The number of skew braces with additive group $N=\Z_p\rtimes D_{2\cdot 6}$ and multiplicative group $G=\Z_p\rtimes D_{2\cdot 6}$ is as shown in the following table.
	
	\begin{center}
		\begin{tabular}{|c||c|c|c|}
			\hline
			$N \backslash G$ & $\Z_p\times D_{2\cdot 6}$ & $\Z_p\rtimes_6^c D_{2\cdot 6} $ & $\Z_p\rtimes_6^d D_{2\cdot 6}$ \\
			\hline
			\hline
			$\Z_p\times D_{2\cdot 6}$ & 4 & 4 & 6 \\ \hline $\Z_p\rtimes_6^c D_{2\cdot 6} $ & 8 & 8 & 12 \\ \hline
			$\Z_p\rtimes_6^d D_{2\cdot 6} $ & 12 & 12 & 24 \\ \hline
		\end{tabular}
	\end{center}
\end{proposition}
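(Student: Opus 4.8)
The plan is to run the algorithm of Section~\ref{method} with $n=12$ and $E=F=D_{2\cdot 6}$, using the precomputations of Section~\ref{D26}. By Proposition~\ref{pdsdp} and Proposition~\ref{2Gs}, every skew brace with additive group $\Z_p\rtimes D_{2\cdot 6}$ and multiplicative group $\Z_p\rtimes D_{2\cdot 6}$ arises, up to isomorphism, from one of the four conjugacy classes $F_1,F_2,F_3,F_4$ of regular subgroups of $\Hol(D_{2\cdot 6})$ isomorphic to $D_{2\cdot 6}$ listed above, a morphism $\sigma\in\Hom(B_{12},\Z_p^*)$ fixing the additive structure and a morphism $\tau\in\Hom(F_i,\Z_p^*)$ fixing the multiplicative structure; two such triples give isomorphic braces exactly when the $\sigma$'s (resp.\ the $\tau$'s) lie in the same orbit under the relevant action, and by Proposition~\ref{2Gs} each orbit with $\sigma\neq 1$ yields two non-conjugate regular subgroups $G(\sigma,\tau)$ and $G(\sigma,\tau)'$, while the orbit $\sigma=1$ yields one.

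First I would record the data from Section~\ref{D26}: $\Hom(E,\Z_p^*)=\{1,\sigma_1,\sigma_2,\sigma_3\}$ with $\sigma_1=(1,-1)$ (cyclic kernel), $\sigma_2=(-1,-1)$, $\sigma_3=(-1,1)$ (dihedral kernel), $\Sigma_{\sigma_1}=\Aut(E)$ and $\Sigma_{\sigma_2}=\Sigma_{\sigma_3}=\langle g_1,g_2^2\rangle\cong D_{2\cdot 3}$, and $\Hom(F_i,\Z_p^*)=\{1,\tau_1,\tau_2,\tau_3\}$ with $\tau_1$ of cyclic and $\tau_2,\tau_3$ of dihedral kernel. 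Since $\pi_2(F_1)=\{\Id\}$, $\pi_2(F_2)=\langle g_1\rangle$ and $\pi_2(F_3)=\pi_2(F_4)=\langle g_1,g_2^2\rangle$ are all contained in each $\Sigma_{\sigma_j}$, Step~1 gives $\Hom(B_{12},\Z_p^*)=\Hom(E,\Z_p^*)$ for every $F_i$. For Step~2 I would evaluate the conjugations $\Phi_g$ on the given generators of each $F_i$ and read off $\Aut(B_{12})=\{g\in\Aut(E):\Phi_g(F_i)=F_i\}$, obtaining $\Aut(E)$ for $F_1$ and $F_3$, $\langle g_1\rangle$ for $F_2$, and $\langle g_1g_2^2\rangle$ for $F_4$. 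Step~3 then gives the $\sigma$-orbits: always $\{1\}$ and $\{\sigma_1\}$, and in addition $\{\sigma_2,\sigma_3\}$ a single orbit for $F_1,F_3$ but $\{\sigma_2\}$, $\{\sigma_3\}$ two distinct orbits for $F_2,F_4$; in all cases the three additive groups $\Z_p\times D_{2\cdot 6}$, $\Z_p\rtimes_6^c D_{2\cdot 6}$, $\Z_p\rtimes_6^d D_{2\cdot 6}$ are realized.

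Next, for each orbit representative $\sigma$, Step~4 asks for the orbits of $\Aut(B_{12})\cap\Sigma_\sigma$ on $\Hom(F_i,\Z_p^*)$: the orbits of $1$ and $\tau_1$ are always singletons, so the only point is whether $\tau_2$ and $\tau_3$ get identified. This is settled by computing $\tau_3\Phi_{g_1g_2^i}=\tau_2$ precisely for $i$ odd in the cases $F_1,F_3$ (hence $\tau_2,\tau_3$ merge under $\Aut(E)$ but split under $\Sigma_{\sigma_2}=\Sigma_{\sigma_3}$, since $g_1g_2^i\notin\Sigma_{\sigma_2}$ for $i$ odd), and by $\tau_3\Phi_{g_1}\neq\tau_2$ and $\tau_3\Phi_{g_1g_2^2}\neq\tau_2$ in the cases $F_2,F_4$ (hence they never merge). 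Finally I would assemble the entry of the table for a pair $(N,G)$ as $\sum_{i=1}^4 c_i$, where $c_i$ counts the orbits of $\tau$ of the kernel type of $G$ taken over the orbit representatives $\sigma$ of the kernel type of $N$, weighted by $1$ if $\sigma=1$ and by $2$ if $\sigma\neq 1$; in the row $N=\Z_p\rtimes_6^d D_{2\cdot 6}$ one must add the contributions of both $\sigma_2$ and $\sigma_3$ for $i=2,4$, since there these are two distinct orbits although they define isomorphic additive groups. Adding up yields $4,4,6$, then $8,8,12$, then $12,12,24$, as claimed.

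I expect the main obstacle to be exactly this last bookkeeping: keeping track that $\sigma_2,\sigma_3$ (and likewise $\tau_2,\tau_3$) give isomorphic $\rtimes_6^d$ structures yet may be separate orbits, so their brace counts add rather than collapse, and inserting the factor $2$ of Proposition~\ref{2Gs} in precisely the right positions; the determination of the four groups $\Aut(B_{12})$ is routine but an error there would propagate to every entry of the table.
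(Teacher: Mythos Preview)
Your proposal is correct and follows essentially the same route as the paper: it identifies $\Hom(B_{12},\Z_p^*)=\Hom(E,\Z_p^*)$ for all four $F_i$, computes $\Aut(B_{12})$ as $\Aut(E)$ for $F_1,F_3$ and as $\langle g_1\rangle$, $\langle g_1g_2^2\rangle$ for $F_2,F_4$, obtains the same $\sigma$-orbit structure, and resolves the $\tau_2$ vs.\ $\tau_3$ question in each case exactly as the paper does (the paper uses $\Phi_{g_2^i}$ rather than $\Phi_{g_1g_2^i}$ for $F_3$, but both witnesses give the same conclusion). Your final bookkeeping, including the doubling from Proposition~\ref{2Gs} and the summation over the two separate dihedral $\sigma$-orbits for $F_2,F_4$, matches the paper's tables and yields the stated entries.
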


\subsubsection*{Case $E=\Dic_{12}$}

Let $E=\langle x,y\rangle$. There are also  four conjugacy classes of regular subgroups of $\mathrm{Hol}(E)$ isomorphic to $D_{2\cdot6}$. 

\begin{center}
    \begin{tabular}{|l|l|l|l|}
    
    \hline
       $F_1=\langle (xy,g_1),(y^2,g_1g_2^3)\rangle$ &
     $ F_2=\langle (xy,g_2),(y^3,g_1)\rangle$&
      $F_3=\langle (xy^2,\Id),(y,g_2^3)\rangle$&
      $F_4=\langle (xy^2,g_2^4),(y,g_1)\rangle$\\
      \hline
      $\pi_2(F_1)=\langle g_1,g_2^3\rangle$ &
      $\pi_2(F_2)=\mathrm{Aut}(E)$ &
      $\pi_2(F_3)=\langle g_2^3\rangle$&
      $\pi_2(F_4)=\langle g_1,g_2^2\rangle$\\
      $\subseteq \Sigma_{\sigma}$,  $\sigma=(1,1),(2,1)$ & 
      $\subseteq \Sigma_{\sigma}$,  $\sigma=(1,1),(2,1)$ &
      $\subseteq \Sigma_{\sigma}$,  $\sigma=(1,1),(2,1)$ &
      $\subseteq \Sigma_{\sigma}$,  $\sigma=(1,1),(2,1)$
       \\
       $\Aut(B_{12})= \langle g_1g_2^3\rangle$&
       $\Aut(B_{12})=\langle g_1g_2^5\rangle$ &$\Aut(B_{12})= \Aut(E)$&
       $\Aut(B_{12})=\Aut(E)$\\
       2 orbits & 2 orbits & 2 orbits & 2 orbits\\
      \hline
    \end{tabular}
\end{center}    

Regarding the action of $\Aut(B_{12})\cap \Sigma_{\sigma}=\Aut(B_{12})$ on $\Hom(F_i,\Z_p^*)$, we have that $\tau_2,\tau_3$ are fixed points when $i=1,2$ and 
$$
\tau_2=\tau_3\Phi_{g_1g_2^2} \mbox{ for } F_3, \quad
\tau_2=\tau_3\Phi_{g_1} \mbox{ for } F_4.
$$

\begin{proposition}
	Let $p$ be a prime number, $p\geq 7$.
	The number of skew braces with additive group $N=\Z_p\rtimes Dic_{12}$ and multiplicative group $G=\Z_p\rtimes D_{2\cdot 6}$ is as shown in the following table.
	
	\begin{center}
		\begin{tabular}{|c||c|c|c|}
			\hline
			$N \backslash G$ & $\Z_p\times D_{2\cdot 6}$ & $\Z_p\rtimes_6^c D_{2\cdot 6} $ & $\Z_p\rtimes_6^d D_{2\cdot 6}$ \\
			\hline
			\hline
			$\Z_p\times \Dic_{12}$ & 4 & 4 & 6 \\ \hline $\Z_p\rtimes_6 \Dic_{12} $ & 8 & 8 & 12 \\ \hline 
		\end{tabular}
	\end{center}
\end{proposition}

\subsection{$F=\Dic_{12}$}

 According to the table in Section \ref{twelve}
and Theorem \ref{npgivesn}, for a brace of size 12 with multiplicative group  $F=\Dic_{12}$, the additive group can be any group of order 12. In all the sub-cases the group $\Hom(F,\Z_p^*)$ is as in \ref{dici} so that
$\tau$ identifies as a pair $(1,1), (2,1), (4,1)$ or $(4,3)$.

\subsubsection*{Case $E=C_{12}$}

Let $E=\langle c\rangle\simeq C_{12}$. There is  one conjugacy class of regular subgroups of $\Hol(E)$ isomorphic to $\Dic_{12}$. 

\begin{center}
    \begin{tabular}{|l|l|}
    \hline
      Representative& $F=\langle x=(c^4,\Id) ,\ y=(c^3,g_5)\rangle $ \\
      \hline
      $\pi_2(F)$ & $\langle g_5 \rangle$ \\
      $\pi_2(F)\subseteq \Sigma_{\sigma}$ &  $\sigma=(d,j)$ with $d=1,2,4$ \\
      $\Phi_g(F)=F$ & $\forall g$ since $F$ is normal in $\Hol(E)$\\
      $\Aut(B_{12})$ & $\Aut(E)$\\
      Orbits in $\Hom(B_{12},\Z_p^*)$ &  3, one for each order\\
      \hline
    \end{tabular}
\end{center}    

For $\sigma$ of order $1$ or $2$, $\Sigma_{\sigma}=\Aut(E)=\Aut(B_{12})$
and the action on $\Hom(F,\Z_p^*)$ gives one orbit for each order.
For $\sigma$ of order $4$, since $\Phi_{g_5}(y)=(c^3, g_5)=y,$
the points $(4,1)$ and $(4,3)$ are fixed.

\begin{proposition}
	Let $p$ be a prime number, $p\geq 7$. The number of braces with additive group $N=\Z_p \rtimes C_{12}$ and multiplicative group $G=\Z_p\rtimes \Dic_{12}$ is as shown in the following table, where we need
	$p\equiv 1 \pmod{\frac {12}k}$ for a kernel of size $k$ to occur.
	
	\begin{center}
		\begin{tabular}{|c||c|c|c|}
			\hline
			$N \backslash G$ & $\Z_p\times \Dic_{12}$ & $\Z_p\rtimes_{6} \Dic_{12}$ & $\Z_p\rtimes_{3} \Dic_{12}$  \\
			\hline
			\hline
			$\Z_p \times C_{12}$ &1&1&1\\
			\hline
			$\Z_p \rtimes_{6} C_{12}$ &2&2&2\\
			\hline
			$\Z_p \rtimes_{3} C_{12}$ &2&2&4\\
			\hline
			
		\end{tabular}
	\end{center}
\end{proposition}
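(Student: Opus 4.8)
The plan is to run the four steps of the algorithm of Section~\ref{method} for the input brace $B_{12}$ with additive group $E=\langle c\rangle\simeq C_{12}$ and multiplicative group $F\simeq\Dic_{12}$. By the table in Section~\ref{twelve} this brace is unique and may be represented by the regular subgroup $F=\langle x=(c^4,\Id),\ y=(c^3,g_5)\rangle$ of $\Hol(C_{12})$, where $x$ has order $3$, $y$ has order $4$, and $yxy^{-1}=x^2$. All the data for $E$ — $\Aut(E)=\langle g_5,g_7\rangle\simeq\Z_{12}^*$, the identification $\Hom(E,\Z_p^*)\xrightarrow{\sim}\mu_D$ with $D=\gcd(12,p-1)$, and the stabilisers $\Sigma_\sigma=\{g_i:i\equiv1\bmod\ord(\sigma)\}$ — will be imported from Section~\ref{ciclic}, and the structure of $\Hom(F,\Z_p^*)$ (elements $\tau$ determined by $\tau(y)=\zeta_d^j$, with $\tau(x)=1$ always) from Section~\ref{dici}.

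For Step~1, since $\pi_2(F)=\langle g_5\rangle$, the condition $\pi_2(F)\subseteq\Sigma_\sigma$ becomes $5\equiv1\bmod\ord(\sigma)$, i.e. $\ord(\sigma)\mid4$; thus $\Hom(B_{12},\Z_p^*)$ consists of the $\sigma$ whose kernel has size $12$, $6$ or $3$ (order-$3$ morphisms are excluded), the size-$3$ case occurring exactly when $p\equiv1\pmod4$. For Step~2, $F$ is a normal subgroup of $\Hol(C_{12})$, so $\Phi_g(F)=F$ for all $g\in\Aut(E)$ and hence $\Aut(B_{12})=\Aut(E)$. For Step~3, the orbits of $\Aut(E)$ on $\Hom(B_{12},\Z_p^*)$ are the ones listed in Section~\ref{ciclic}: one orbit per order, so there are $3$ additive structures when $p\equiv1\pmod4$ and $2$ otherwise, the orbit of $\sigma=1$ giving the direct product $\Z_p\times C_{12}$; this accounts for the row labels of the table.

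The substantive part is Step~4: for each representative $\sigma$ compute $A_\sigma=\Aut(B_{12})\cap\Sigma_\sigma$ and its orbits on $\Hom(F,\Z_p^*)$ under $(g,\tau)\mapsto\tau\Phi_g$. Because $\tau(x)=1$, the only computation needed is the action of $\Aut(E)$ on $y$; as $\Aut(C_{12})$ is abelian one has $\Phi_{g_i}(y)=(c^{3i},g_5)$, and reducing $3i\bmod 12$ gives $\Phi_{g_5}(y)=y$ while $\Phi_{g_7}(y)=\Phi_{g_{11}}(y)=y^3$. Consequently: for $\sigma$ of order $1$ or $2$ one has $A_\sigma=\Aut(E)$, and $\Phi_{g_7}$ merges $(4,1)$ with $(4,3)$, so there is exactly one orbit of each order $1,2,4$; for $\sigma$ of order $4$ one has $A_\sigma=\langle g_5\rangle$, which acts trivially on $\Hom(F,\Z_p^*)$, so the orbits are $(1,1)$, $(2,1)$, $(4,1)$, $(4,3)$ — one orbit in each of the orders $1,2$ and two in order $4$. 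Finally I would apply the counting rule from Proposition~\ref{2Gs} and Step~4 of the algorithm: for a given pair (additive structure $\sigma$, kernel class of $\tau$) the number of braces equals the number of orbits if $\sigma=1$ (as then $G(\sigma,\tau)=G(\sigma,\tau)'$) and twice that number if $\sigma\neq1$. Matching this against the three columns (kernel of $\tau$ equal to $12$, $6$, $3$) reproduces the three rows of the table; for instance the last row ($\sigma$ of order $4$, requiring $p\equiv1\pmod 4$) yields $2\cdot1,\ 2\cdot1,\ 2\cdot2=2,2,4$.

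I expect the only genuine obstacle to be the bookkeeping in Step~4: correctly computing the $\Phi_g$-action on the generator $y$ and tracking which $\tau$-orbits survive in the reduced stabiliser $\langle g_5\rangle$ as opposed to the full $\Aut(E)$. Once $\Phi_{g_5}(y)=y$ and $\Phi_{g_7}(y)=y^3$ are in hand, everything else is a routine orbit count, and the congruence conditions $p\equiv1\pmod{12/k}$ are precisely the conditions for a kernel of size $k$ to occur in $\Hom(E,\Z_p^*)$ (and likewise in $\Hom(F,\Z_p^*)$).
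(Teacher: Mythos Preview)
Your proposal is correct and follows essentially the same route as the paper: the same representative $F=\langle(c^4,\Id),(c^3,g_5)\rangle$, the same identification $\pi_2(F)=\langle g_5\rangle$ forcing $\ord(\sigma)\mid4$, normality of $F$ in $\Hol(C_{12})$ giving $\Aut(B_{12})=\Aut(E)$, and the key computation $\Phi_{g_5}(y)=y$ showing that $(4,1)$ and $(4,3)$ stay separate when $A_\sigma=\langle g_5\rangle$. If anything you are slightly more explicit than the paper, which only records $\Phi_{g_5}(y)=y$ and asserts the one-orbit-per-order claim for $\sigma$ of order $1,2$ without writing out $\Phi_{g_7}(y)=y^3$.
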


\subsubsection*{Case $E=C_{6}\times C_2$}

Let $E=\langle a\rangle \times \langle b \rangle$. There is 
one conjugacy class of regular subgroups of $\Hol(E)$ isomorphic to $\Dic_{12}$.

\begin{center}
    \begin{tabular}{|l|l|}
    \hline
      Representative& $F=\langle x=(a^2,\Id), y=(b,g_1 g_2)\rangle $ \\
      \hline
       $\pi_2(F)$& $\langle g_2 g_1\rangle$ \\
      $\pi_2(F)\subseteq \Sigma_{\sigma}$ &  $\sigma=(1,0)$ with $d=1$, or $\sigma=(1,1)$ with $d=2$\\
      $\Phi_g(F)=F$ & $\forall g$ since $F$ is normal in $\Hol(E)$\\
      Orbits in $\Hom(B_{12},\Z_p^*)$ &  2, one for each order\\
      \hline
    \end{tabular}
\end{center}    

For the nontrivial $\sigma$ we have $\Sigma_{\sigma}=\langle g_2^3, g_2^4g_1\rangle$. From
$\Phi_{g_2 g_1}(y)=y^3$, we obtain that
$(4,3)=(4,1)\Phi_{g_2 g_1}$ and in $\Hom(F,\Z_p^*)$ there is just one orbit in order 4.

\begin{proposition}
	Let $p$ be a prime number, $p\geq 7$. The number of braces with additive group $N=\Z_p \rtimes (C_{6}\times C_2)$ and multiplicative group $G=\Z_p\rtimes \Dic_{12}$ is as shown in the following table, where we need $p\equiv 1 \pmod{4}$ for the group  $\Z_p\rtimes_{3} \Dic_{12}$ to occur.
	
	\begin{center}
		\begin{tabular}{|c||c|c|c|}
			\hline
			$N \backslash G$ & $\Z_p\times \Dic_{12}$ & $\Z_p\rtimes_{6} \Dic_{12}$ & $\Z_p\rtimes_{3} \Dic_{12}$  \\
			\hline
			\hline
			$\Z_p \times (C_{6}\times C_2)$ &1&1&1\\
			\hline
			$\Z_p \rtimes_{6} (C_{6}\times C_2)$ &2&2&2\\
			\hline
			
		\end{tabular}
	\end{center}
	
\end{proposition}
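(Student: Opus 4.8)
The plan is to run the four-step algorithm of Section~\ref{method} on the unique size-$12$ brace $B_{12}$ with additive group $E=C_6\times C_2$ and multiplicative group $F=\Dic_{12}$, using the explicit regular subgroup $F=\langle x=(a^2,\Id),\,y=(b,g_1g_2)\rangle\subseteq\Hol(E)$ fixed above, the data for $E$, $\Aut(E)$, $\Hom(E,\Z_p^*)$ and the stabilizers $\Sigma_\sigma$ from \ref{C62}, and the description of $\Hom(F,\Z_p^*)$ for $F=\Dic_{12}$ from \ref{dici}.

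First (Step~1) I would compute $\Hom(B_{12},\Z_p^*)=\{\sigma\in\Hom(E,\Z_p^*):\pi_2(F)\subseteq\Sigma_\sigma\}$. Since $\pi_2(F)=\langle g_1g_2\rangle$ has order~$2$, and the stabilizer tables of \ref{C62} show that $g_1g_2$ lies in no $\Sigma_\sigma$ attached to a $\sigma$ of order~$3$ or~$6$, the only admissible morphisms are the trivial one and the single order-$2$ morphism $\sigma$ given by $\sigma(a)=\sigma(b)=-1$, whose stabilizer $\Sigma_\sigma$ is a Klein four-group containing $g_1g_2$; thus $\Hom(B_{12},\Z_p^*)=\{1,\sigma\}$. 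For Steps~2--3 it then suffices to observe that $\Aut(B_{12})$ acts on this two-element set, necessarily fixing both points, so there are exactly two orbits and hence two additive structures: the direct product $\Z_p\times(C_6\times C_2)$, coming from $\sigma=1$, and $\Z_p\rtimes_6(C_6\times C_2)$, coming from $\sigma$ (kernel of size~$6$). These are the two rows of the table.

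For Step~4 I would, for each of $\sigma=1$ and the nontrivial $\sigma$, count the orbits of $\Aut(B_{12})\cap\Sigma_\sigma$ acting on $\Hom(F,\Z_p^*)$ by $(g,\tau)\mapsto\tau\Phi_g$. Because $\Dic_{12}$ has abelianization cyclic of order~$4$, generated by the image of $y$, one has $\Hom(F,\Z_p^*)\cong\mu_{\gcd(4,p-1)}$: one morphism of each order dividing $\gcd(4,p-1)$, except for two of order~$4$ when $p\equiv1\pmod4$; and $\tau\mapsto\tau\Phi_g$ preserves the order of $\tau$. The one computation that matters is $\Phi_{g_1g_2}(x)=x^2$ and $\Phi_{g_1g_2}(y)=y^3$: it confirms $g_1g_2\in\Aut(B_{12})$ and shows that $\Phi_{g_1g_2}$ interchanges the two order-$4$ morphisms while fixing the unique morphisms of smaller order. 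As $g_1g_2$ lies in $\Aut(B_{12})\cap\Sigma_\sigma$ for both values of $\sigma$ — trivially when $\sigma=1$, and by the Step~1 condition $\pi_2(F)\subseteq\Sigma_\sigma$ otherwise — in each case there is exactly one orbit for each admissible size of $\Ker\tau$: one of size~$12$, one of size~$6$, and, only if $p\equiv1\pmod4$, one of size~$3$. By the rule of Step~4, the trivial $\sigma$ contributes one brace per orbit, giving the first row $1,1,1$, while the nontrivial $\sigma$ contributes the two non-conjugate braces $G(\sigma,\tau)$ and $G(\sigma,\tau)'$ per orbit, giving the second row $2,2,2$; and the column $\Z_p\rtimes_3\Dic_{12}$ appears exactly when a morphism $\tau$ of order~$4$ exists, i.e. $p\equiv1\pmod4$, which is the proviso stated.

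The only genuine work is the bookkeeping in Steps~2 and~4: determining which automorphisms of $E$ normalize the concrete $F$ (so that $\Aut(B_{12})\cap\Sigma_\sigma$ is correctly identified) and carrying out the conjugation $\Phi_{g_1g_2}$ on the generators $x,y$ of $F$ to reach $\Phi_{g_1g_2}(y)=y^3$. Once that single relation is in place, the counting collapses to a short Burnside-type tally over orbit sets of size at most~$4$, exactly as in the preceding subsections.
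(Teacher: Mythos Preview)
Your proposal is correct and follows essentially the same approach as the paper's proof. The paper argues slightly differently in one place---it notes that $F$ is normal in $\Hol(E)$, hence $\Aut(B_{12})=\Aut(E)$, and then works with $\Aut(B_{12})\cap\Sigma_\sigma=\Sigma_\sigma$---whereas you verify directly that $g_1g_2\in\Aut(B_{12})\cap\Sigma_\sigma$ via the computation $\Phi_{g_1g_2}(x)=x^2$, $\Phi_{g_1g_2}(y)=y^3$; but this single element is all that is needed to merge the two order-$4$ morphisms, so the two arguments coincide.
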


\subsubsection*{Case $E=A_4$}

There are two  conjugacy classes of regular subgroups of $\Hol(A_4)$ isomorphic to $\Dic_{12}$.

\begin{center}
    \begin{tabular}{|l|l|}
    \hline
      $F_1=\langle x_1,\ y_1\rangle $ &
     $F_2=\langle x_2, \ y_2\rangle$\\
     $x_1=((1,3,2),\phi_{(1,2,3)}),y_1=((1,3)(2,4),\phi_{(1,2)})$&
     $ x_2=((1,3,4),\Id), y_2=((1,3)(2,4),\phi_{(1,3,2,4)})$\\\hline
      $\pi_2(F_1)=\langle \phi_{(1,2,3)},\phi_{(1,2)}\rangle$ & $\pi_2(F_2)=\langle \phi_{(1,3,2,4)} \rangle $  \\
      $\pi_2(F_1)\subseteq \Sigma_{\sigma}$ for trivial  $\sigma$&
      $\pi_2(F_2)\subseteq \Sigma_{\sigma}$ for trivial  $\sigma$\\
      \hline
    \end{tabular}
\end{center}

Since
$$\Phi_{\phi_{(1,2)}}(x_1)=x_1^2,\quad \Phi_{\phi_{(1,2)}}(y_1)=y_1^3,\quad \Phi_{\phi_{(1,2)}}(F_1)=F_1,\quad (4,3)=(4,1)\Phi_{\phi_{(1,2)}} $$
$$\Phi_{\phi_{(3,4)}}(x_2)=x_2^2,\quad \Phi_{\phi_{(3,4)}}(y_2)=y_2^3,
\quad\Phi_{\phi_{(3,4)}}(F_2)=F_2, \quad (4,3)=(4,1)\Phi_{\phi_{(3,4)}},$$
in $\Hom(F_i,\Z_p^*)$ there is one orbit for each order.

\begin{proposition}
	Let $p$ be a prime number, $p\geq 7$. The number of skew braces with additive group $N=\Z_p \rtimes A_4$ and multiplicative group $G=\Z_p\rtimes \Dic_{12}$ is as shown in the following table, where we need $p\equiv 1 \pmod{4}$ for the group  $\Z_p\rtimes_{3} \Dic_{12}$ to occur.

	\begin{center}
		\begin{tabular}{|c||c|c|c|}
			\hline
			$N \backslash G$ & $\Z_p\times \Dic_{12}$ & $\Z_p\rtimes_{6} \Dic_{12}$ & $\Z_p\rtimes_{3} \Dic_{12}$ \\
			\hline
			\hline
			$\Z_p \times A_4$ &2&2&2\\
			\hline
			
		\end{tabular}
	\end{center}
\end{proposition}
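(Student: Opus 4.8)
The plan is to apply Proposition~\ref{pdsdp} and the algorithm of Section~\ref{method} with $E=A_4$ as the additive group of the relevant braces of size $12$ and $F\simeq\Dic_{12}$ as their multiplicative group, and then to translate the resulting orbit counts into the table. By Proposition~\ref{pdsdp}, a brace of size $12p$ with multiplicative group isomorphic to $\Z_p\rtimes_\tau\Dic_{12}$ is a double semidirect product of the trivial brace $\Z_p$ with a brace $B_{12}$ of size $12$ whose multiplicative group is $\Dic_{12}$, and its additive group is $\Z_p\rtimes_\sigma E$ for $E$ the additive group of $B_{12}$ and some $\sigma\in\Hom(B_{12},\Z_p^*)$. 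From the table in Section~\ref{twelve} there are exactly two braces of size $12$ of type $(A_4,\Dic_{12})$; I would fix the representatives $F_1,F_2\subseteq\Hol(A_4)$ listed in the statement, check directly that each is a regular subgroup (the orders of $x_i,y_i$, the relation $y_ix_i=x_i^2y_i$, and $|F_i|=12$, so that $\pi_1|_{F_i}$ is a bijection), and record that they are not conjugate in $\Hol(A_4)$ because $\pi_2(F_1)\simeq S_3$ and $\pi_2(F_2)\simeq C_4$ are non-conjugate in $\Aut(A_4)\simeq S_4$; by the proposition of Section~\ref{method} comparing conjugacy of the images $\Psi_\alpha(F_i)$ in $\Hol(N)$ and in $\Hol(E)$, the brace families produced by $F_1$ and by $F_2$ remain disjoint, so it suffices to count each separately and add.

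The crucial step is to see that the additive group is forced to be the direct product $\Z_p\times A_4$. By Step~1 of the algorithm, $\Hom(B_{12},\Z_p^*)=\{\sigma\in\Hom(A_4,\Z_p^*):\pi_2(F_i)\subseteq\Sigma_\sigma\}$. By Section~\ref{A4}, $\Hom(A_4,\Z_p^*)$ is trivial unless $p\equiv1\pmod3$, in which case the two nontrivial morphisms $\sigma_1,\sigma_2$ have stabiliser $\Sigma_{\sigma_j}=A_4$. Since $\pi_2(F_1)\simeq S_3$ and $\pi_2(F_2)\simeq C_4$ contain odd permutations, they are not contained in $A_4$, hence $\pi_2(F_i)\not\subseteq\Sigma_{\sigma_j}$ and $\Hom(B_{12},\Z_p^*)=\{1\}$. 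Thus $\sigma=1$, the orbit count of Step~3 is $1$, the unique additive structure is $\Z_p\times A_4$ (the single row of the table), and, since $G(1,\tau)=G(1,\tau)'$ by Proposition~\ref{2Gs}, there is no doubling of multiplicative structures to account for.

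It remains to run Step~4 with $\sigma=1$, that is, to count the orbits of $\Aut(B_{12})=\Aut(B_{12})\cap\Sigma_1$ acting on $\Hom(F_i,\Z_p^*)$. By Section~\ref{dici}, $\Hom(\Dic_{12},\Z_p^*)=\{(1,1),(2,1),(4,1),(4,3)\}$, where $(d,j)$ is the morphism trivial on $x_i$ with $y_i\mapsto\zeta_d^j$, and $(4,1),(4,3)$ occur precisely when $p\equiv1\pmod4$. The action preserves the order of $\tau$, so the three orders $1,2,4$---corresponding to the columns $\Z_p\times\Dic_{12}$, $\Z_p\rtimes_6\Dic_{12}$, $\Z_p\rtimes_3\Dic_{12}$---lie in separate orbits, and the only remaining question is whether $(4,1)$ and $(4,3)$ are identified. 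Exhibiting a brace automorphism that does so (as in the computation preceding the statement: $\Phi_{\phi_{(1,2)}}$ for $F_1$ and $\Phi_{\phi_{(3,4)}}$ for $F_2$, each stabilising the respective $F_i$ and acting by $y_i\mapsto y_i^3$, $x_i\mapsto x_i^2$) shows that $(4,1)$ and $(4,3)$ lie in one orbit. Hence each $F_i$ contributes exactly one orbit, and thus (since $\sigma=1$) one multiplicative structure, in each column; summing over $F_1$ and $F_2$ gives $2$ braces in each of the three columns, the last present only when $p\equiv1\pmod4$, which is the assertion. The main obstacle is the explicit verification inside $\Hol(A_4)$---using $\Phi_g(a,f)=(g(a),gfg^{-1})$---that $\Phi_{\phi_{(1,2)}}$ stabilises $F_1$ and $\Phi_{\phi_{(3,4)}}$ stabilises $F_2$ and that each inverts the order-$4$ generator; everything else follows formally, the additive part collapsing to the direct product for the structural reason above and the orbit count then being immediate by inspection.
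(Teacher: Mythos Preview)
Your proof is correct and follows essentially the same approach as the paper's own argument: you identify the two conjugacy classes $F_1,F_2$, observe that $\pi_2(F_1)\simeq S_3$ and $\pi_2(F_2)\simeq C_4$ are not contained in $A_4=\Sigma_{\sigma_j}$ so that only $\sigma=1$ survives (forcing the additive group to be the direct product), and then collapse $(4,1)$ and $(4,3)$ using the same brace automorphisms $\Phi_{\phi_{(1,2)}}$ and $\Phi_{\phi_{(3,4)}}$ that the paper exhibits. Your write-up is in fact slightly more explicit than the paper's in justifying why there is no doubling (via $G(1,\tau)=G(1,\tau)'$) and why the two families $F_1,F_2$ stay disjoint at the level of $\Hol(N)$.
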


\subsubsection*{Case $E=D_{2\cdot 6}$}

Let $E=\langle r, s\rangle$. There are two conjugacy classes of regular subgroups of $\Hol(E)$ isomorphic to $\Dic_{12}$.

\begin{center}
    \begin{tabular}{|l|l|l|}
    \hline
     Representative & $F_1=\langle x_1=(r^2,\Id),\ y_1=(s r^5,g_2^3)\rangle $ &
     $F_2=\langle x_2=(r^2,g_2^2),\ y_2=(r^2 s,g_2 g_1)\rangle$\\
   \hline
   $\pi_2(F)$
     & $\langle g_2^3\rangle$ & 
     $\langle g_2^2,g_2g_1\rangle$ \\
     $\pi_2(F)\subseteq \Sigma_{\sigma}$& 
     $\sigma$ of order 1 or 2 &  $\sigma$ of order 1 or 2\\
     Orbits in $\Hom(B_{12},\Z_p^*)$ & 2, one for each order &2, one for each order \\ \hline
    \end{tabular}
\end{center}    

We have
$$
\Phi_{g_2^3}(x_i)=x_i,\quad \Phi_{g_2^3}(y_i)=y_i^3,
\quad \Phi_{g_2^3}(F_i)=F_i, \quad (4,3)=(4,1)\Phi_{g_2^3},
\quad i=1,2.$$

So, for each $F_1, F_2$ we obtain one orbit for each order.

\begin{proposition}
	Let $p$ be a prime number, $p\geq 7$. The number of skew braces with additive group $N=\Z_p \rtimes D_{2\cdot6}$ and multiplicative group $G=\Z_p\rtimes \Dic_{12}$ is as shown in the following table, where we need $p\equiv 1 \pmod{4}$ for the group  $\Z_p\rtimes_{3} \Dic_{12}$ to occur.
	
	\begin{center}
		\begin{tabular}{|c||c|c|c|}
			\hline
			$N \backslash G$ & $\Z_p\times \Dic_{12}$ & $\Z_p\rtimes_{6} \Dic_{12}$ & $\Z_p\rtimes_{3} \Dic_{12}$  \\
			\hline
			\hline
			$\Z_p \times D_{2\cdot6}$ &2&2&2\\
			\hline
			$\Z_p \rtimes_6^c D_{2\cdot6}$ &4&4&4\\
			\hline

		\end{tabular}
	\end{center}
	
\end{proposition}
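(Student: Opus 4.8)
The plan is to apply the general procedure of Section~\ref{method} with $E=D_{2\cdot 6}$ as additive group and $F=\Dic_{12}$ as multiplicative group, feeding in the precomputed data of Sections~\ref{D26} and~\ref{dici}. First I would invoke the table in Section~\ref{twelve}: there are exactly two braces of size $12$ with additive group $D_{2\cdot 6}$ and multiplicative group $\Dic_{12}$, hence two conjugacy classes of regular subgroups of $\Hol(D_{2\cdot 6})$ isomorphic to $\Dic_{12}$. I would fix explicit representatives $F_1=\langle x_1,y_1\rangle$ and $F_2=\langle x_2,y_2\rangle$, where in each case $x_i$ has order $3$, $y_i$ has order $4$ and $y_ix_iy_i^{-1}=x_i^2$, and record their projections $\pi_2(F_i)$ to $\Aut(D_{2\cdot 6})=\langle g_1,g_2\rangle$.

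Next, for the additive structures I would carry out Step~1, computing $\Hom(B_{12},\Z_p^*)=\{\sigma\in\Hom(E,\Z_p^*):\pi_2(F_i)\subseteq\Sigma_\sigma\}$. By Section~\ref{D26}, $\Hom(E,\Z_p^*)\cong\mu_2\times\mu_2$ for every $p$, with $\Sigma_{(1,1)}=\Sigma_{(1,-1)}=\Aut(E)$ and $\Sigma_{(-1,-1)}=\langle g_1,g_2^2\rangle\simeq D_{2\cdot 3}$ of order $6$. The point to verify is that neither $\pi_2(F_1)$ nor $\pi_2(F_2)$ is contained in $\langle g_1,g_2^2\rangle$, so the only admissible $\sigma$ are the trivial one and the cyclic-kernel morphism $\sigma=(1,-1)$; since both have singleton $\Aut(E)$-orbit, Step~3 yields exactly two additive structures for each $F_i$, namely $\Z_p\times D_{2\cdot 6}$ and $\Z_p\rtimes_6^c D_{2\cdot 6}$ — this explains why only those two rows appear.

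For Step~4 I would compute, for each $F_i$, the brace automorphism group $\Aut(B_{12})=\{g\in\Aut(E):\Phi_g(F_i)=F_i\}$. Since $\Sigma_\sigma=\Aut(E)$ for both admissible $\sigma$, the group acting on $\Hom(F,\Z_p^*)$ in Step~4 is this full $\Aut(B_{12})$ in every case, so the orbit count is the same for $\sigma$ trivial and for $\sigma=(1,-1)$. By Section~\ref{dici}, $\Hom(\Dic_{12},\Z_p^*)\cong\mu_{\gcd(4,p-1)}$, with elements identified as pairs $(d,j)$ meaning $\tau(y_i)=\zeta_d^j$; the values $d=1$ and $d=2$ always occur, while $d=4$ occurs precisely when $p\equiv 1\pmod 4$. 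The orbits for $d=1,2$ are singletons for trivial reasons, so it remains to decide, when $p\equiv 1\pmod 4$, whether $(4,1)$ and $(4,3)$ are in the same orbit, i.e.\ whether some brace automorphism $\Phi_g$ inverts $y_i$; I expect to find such a $g$ for both $F_1$ and $F_2$, giving exactly one orbit for each value of $d$.

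Finally I would assemble the totals using the rule at the end of Section~\ref{method}: for the trivial $\sigma$ (the direct-product additive structure) the number of braces equals the number of orbits, i.e.\ $1$ for each of $\Z_p\times\Dic_{12}$, $\Z_p\rtimes_6\Dic_{12}$ and $\Z_p\rtimes_3\Dic_{12}$ coming from each $F_i$, hence $2,2,2$ after summing over $F_1$ and $F_2$; for $\sigma=(1,-1)$ each orbit contributes two braces, giving $4,4,4$, and the column $\Z_p\rtimes_3\Dic_{12}$ is present only when $p\equiv 1\pmod 4$. The main obstacle is the bookkeeping in Steps~1--2: choosing correct representatives $F_1,F_2$ inside $\Hol(D_{2\cdot 6})$, computing their $\pi_2$-images, and above all determining exactly which $g\in\Aut(D_{2\cdot 6})$ satisfy $\Phi_g(F_i)=F_i$ together with their action on $x_i,y_i$; once that is in hand, the remaining orbit counts are routine.
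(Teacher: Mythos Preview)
Your plan is correct and follows exactly the paper's approach: the paper takes representatives $F_1=\langle (r^2,\Id),(sr^5,g_2^3)\rangle$ and $F_2=\langle (r^2,g_2^2),(r^2s,g_2g_1)\rangle$, notes that $\pi_2(F_1)=\langle g_2^3\rangle$ and $\pi_2(F_2)=\langle g_2^2,g_2g_1\rangle$ are not contained in $\Sigma_{(-1,\pm1)}$ so only $\sigma=1$ and $\sigma=(1,-1)$ survive, and then uses the single brace automorphism $\Phi_{g_2^3}$ (which sends $y_i\mapsto y_i^3$ in both cases) to merge $(4,1)$ and $(4,3)$ into one orbit. The remaining tally is exactly as you describe.
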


\subsubsection*{Case $E=\Dic_{12}$}

Let $E=\langle x, y\rangle$. There are two conjugacy classes of regular subgroups of $\Hol(E)$ isomorphic to $\Dic_{12}$.

\begin{center}
    \begin{tabular}{|l|l|l|}
    \hline
     Representative & $F_1=\langle x_1=(x,\Id),\  y_1=(y,\Id)\rangle$ &
     $F_2=\langle x_2=(x^2,g_2^2),\ y_2=(x^2 y,g_2 g_1)\rangle$\\
   \hline
   $\pi_2(F)$
     & $\{\Id \}$ & 
     $\langle g_2^2, g_2 g_1\rangle=\langle g_2^2, g_1g_2\rangle$ \\
     $\pi_2(F)\subseteq \Sigma_{\sigma}$& 
     $\forall \sigma$  &  $\forall\sigma$ \\
     $\Aut(B_{12})$ & contains $g_1$ & contains $g_1$\\
     Orbits in $\Hom(B_{12},\Z_p^*)$ & 3, one for each order &3, one for each order \\ \hline
    \end{tabular}
\end{center}    

We have
$$
\Phi_{g_1}(x_i)=x_i^2,\quad \Phi_{g_1}(y_i)=y_i^3,
\quad \Phi_{g_1}(F_i)=F_i, \quad i=1,2,
$$
and $(4,3)=(4,1)g_1 \mbox{ in } \Hom(F,\Z_p^*).$ For $\sigma=(2,1)$, since $\Sigma_{\sigma}=\Aut(E)$, the above computation shows
that  $(4,3)=(4,1)\Phi_{g_1}$ and we have just one orbit of order 4.
But for $\sigma$ of order 4, since $\Sigma_{\sigma}$ is also the stabilizer of the $\tau$'s of order 4, we have fixed points and 2 orbits.

\begin{proposition}
	Let $p$ be a prime number, $p\geq 7$.
	The number of skew braces with additive group $N:=\Z_p \rtimes \Dic_{12}$ and multiplicative group $G:=\Z_p\rtimes \Dic_{12}$ is as shown in the following table, where we need $p\equiv 1 \pmod{\frac {12}k}$ for a kernel of size $k$ to occur.
	
	\begin{center}
		\begin{tabular}{|c||c|c|c|}
			\hline
			$N \backslash G$ & $\Z_p\times \Dic_{12}$ & $\Z_p\rtimes_{6} \Dic_{12}$ & $\Z_p\rtimes_{3} \Dic_{12}$  \\
			\hline
			\hline
			$\Z_p\times \Dic_{12}$ &2&2&2\\
			\hline
			$\Z_p\rtimes_{6} \Dic_{12}$ &4&4&4\\
			\hline
			$\Z_p\rtimes_{3} \Dic_{12}$ &4&4&8\\
			\hline
			
		\end{tabular}
	\end{center}
\end{proposition}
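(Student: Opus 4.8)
The plan is to run the algorithm of Section~\ref{method} with $E=F=\Dic_{12}$, applied to each of the two representatives $F_1,F_2$ of conjugacy classes of regular subgroups of $\Hol(\Dic_{12})$ isomorphic to $\Dic_{12}$ written above; by Proposition~\ref{pdsdp} and Proposition~\ref{2Gs} every brace of size $12p$ with additive and multiplicative group in the required classes arises this way. First I would record $\pi_2(F_1)=\{\Id\}$ and $\pi_2(F_2)=\langle g_2^2,g_1g_2\rangle$; the latter equals the stabiliser $\Sigma_{(4,1)}$ computed in~\ref{dici}, which is the smallest of all the $\Sigma_\sigma$, so in both cases $\pi_2(F_i)\subseteq\Sigma_\sigma$ for every $\sigma$ and thus $\Hom(B_{12},\Z_p^*)=\Hom(\Dic_{12},\Z_p^*)\cong\mu_{\gcd(4,p-1)}$, with elements $(1,1),(2,1)$ and, when $4\mid p-1$, also $(4,1),(4,3)$. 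Since $\pi_2(F_1)$ and $\pi_2(F_2)$ are not conjugate in $\Aut(\Dic_{12})$, the subgroups $F_1,F_2$ are not conjugate in $\Hol(\Dic_{12})$, so the two families of braces they produce must be enumerated separately.

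Next I would carry out Steps~2 and~3: the crucial point is the direct computation in $\Hol(\Dic_{12})$ that, with $g_1\colon x\mapsto x^2,\ y\mapsto y^3$, one has $\Phi_{g_1}(x_i)=x_i^2$, $\Phi_{g_1}(y_i)=y_i^3$, hence $\Phi_{g_1}(F_i)=F_i$ for $i=1,2$, so $g_1\in\Aut(B_{12})$ in both cases. Because $g_1$ sends the character $(4,1)$ of $\Dic_{12}$ to $(4,3)$ (see~\ref{dici}), the action of $\Aut(B_{12})$ on $\Hom(B_{12},\Z_p^*)$ has a single orbit for each possible size of $\Ker\sigma$, namely $12,6$, and $3$ (the last only when $4\mid p-1$). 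This yields exactly the three additive structures $\Z_p\times\Dic_{12}$, $\Z_p\rtimes_6\Dic_{12}$, $\Z_p\rtimes_3\Dic_{12}$ labelling the rows and columns of the table.

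For Step~4, for a representative $\sigma$ of each additive structure I would count the orbits of $\Aut(B_{12})\cap\Sigma_\sigma$ on $\Hom(F,\Z_p^*)\cong\mu_{\gcd(4,p-1)}$, and then use that the number of isomorphism classes of multiplicative group obtained is the number of orbits when $\sigma=1$ and twice the number of orbits when $\sigma\ne1$ (Proposition~\ref{2Gs}). When $\sigma$ has order $1$ or $2$ one has $\Sigma_\sigma=\Aut(\Dic_{12})$, so $g_1$ again lies in $\Aut(B_{12})\cap\Sigma_\sigma$ and identifies $(4,1)$ with $(4,3)$; thus there is one orbit of $\tau$ for each size of $\Ker\tau$, giving $1$ class per column for $\sigma=1$ and $2$ classes per column for $|\Ker\sigma|=6$. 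Summing the contributions of $F_1$ and $F_2$ gives the first row $2,2,2$ and the second row $4,4,4$. When $\sigma$ has order $4$ one has $\Sigma_\sigma=\langle g_2^2,g_1g_2\rangle$, which is exactly the stabiliser in $\Aut(\Dic_{12})$ of each order-$4$ character of $F$; hence $\Aut(B_{12})\cap\Sigma_\sigma$ fixes both $(4,1)$ and $(4,3)$, producing two orbits among the order-$4$ morphisms $\tau$ while the orders $1$ and $2$ still give one orbit each. After doubling, each $F_i$ contributes $2,2,4$ classes to the three columns, so the third row of the table is $4,4,8$.

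The main obstacle is precisely this last point: one must determine $\Aut(B_{12})=\{g\in\Aut(\Dic_{12}):\Phi_g(F_i)=F_i\}$ accurately enough to see that, for $\sigma$ of order $4$, its intersection with $\Sigma_\sigma$ acts trivially on the two order-$4$ characters of $F$ rather than swapping them; this is what distinguishes the corner entry $8$ from the value $4$ that the ``generic'' reasoning would suggest. Establishing it requires explicit manipulation of the generators $x_i,y_i$ of $F_1,F_2$ inside $\Hol(\Dic_{12})$; once that is in hand, everything else reduces to counting a small group action on a cyclic group of order at most $\gcd(4,p-1)\le4$.
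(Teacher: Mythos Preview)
Your proposal is correct and follows essentially the same route as the paper: the paper also records $\pi_2(F_1)=\{\Id\}$ and $\pi_2(F_2)=\langle g_2^2,g_1g_2\rangle$, verifies $\Phi_{g_1}(x_i)=x_i^2$, $\Phi_{g_1}(y_i)=y_i^3$ to merge the two order-$4$ $\sigma$'s into one additive class, and then for $\sigma$ of order $4$ argues exactly as you do that $\Sigma_\sigma$ is already the stabiliser of the order-$4$ $\tau$'s, so $\Aut(B_{12})\cap\Sigma_\sigma$ cannot swap $(4,1)$ and $(4,3)$, yielding the corner entry $8$. Your identification of this last point as the crux, and the level of detail you propose for it, match the paper's treatment.
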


\subsection{Total numbers}

For a prime number $p\geq 7$ we compile in the following tables the total number of skew braces of size $12p$. The additive group is a semidirect product $\Z_p\rtimes E$ and the multiplicative group is a semidirect product $\Z_p\rtimes F$.
In the first column we have the possible $E$'s and in the first row the possible $F$'s.

\begin{itemize}
	\item If $p\equiv 11 \pmod{12}$
	\begin{center}
		\begin{tabular}{r|c|c|c|c|c|c}
			&\, \, $C_{12}$ \, \,   &   $C_6\times C_2$   & \, \,  $A_4$ \, \,   & \, \, $D_{2\cdot 6}$\, \,  & \, $\Dic_{12}$ \, \, & \\
			\hline
			$C_{12}$        & 6 &9 & 0 & 21 & 6&\\
			$C_6\times C_2 $ & 6 & 8& 1& 17 &6&\\
			$A_4$           & 0 &4 & 4 & 0 &4 &\\
			$D_{2\cdot 6}$   & 12 &34 &0 & 90 &12&\\
			$\Dic_{12}$     & 12 &18 &0 & 42 &12&\\
			
			\hline
			&36&73  & 5 & 170 & 40 & $\mathbf{324}$\\
		\end{tabular}
	\end{center}
	
\item If $p\equiv 5 \pmod{12}$
	\begin{center}
		\begin{tabular}{r|c|c|c|c|c|c}
			&\, \, $C_{12}$ \, \,   &   $C_6\times C_2$   & \, \,  $A_4$ \, \,   & \, \, $D_{2\cdot 6}$\, \,  & \, $\Dic_{12}$ \, \, & \\
			\hline
			$C_{12}$        &17 &9 &0  & 21 &17 &\\
			$C_6\times C_2$  & 9 &8 &1 & 17 &9&\\
			$A_4$           &0 & 4& 4 & 0 &6 &\\
			$D_{2\cdot 6}$   & 18 &34 &0 & 90 & 18&\\
			$\Dic_{12}$     & 34 &18 &0 & 42 &34&\\
			
			\hline
			&78& 73 & 5 & 170 & 84 & $\mathbf{410}$\\
		\end{tabular}
	\end{center}
	
	\item If $p\equiv 7 \pmod{12}$
	\begin{center}
		\begin{tabular}{r|c|c|c|c|c|c}
			&\, \, $C_{12}$ \, \,   &   $C_6\times C_2$   & \, \,  $A_4$ \, \,   & \, \, $D_{2\cdot 6}$\, \,  & \, $\Dic_{12}$ \, \, & \\
			\hline
			$C_{12}$   &36 &54 & 0 & 21 & 6&\\
			$C_6\times C_2$  &36 & 46& 8& 17 &6&\\
			$A_4$   &0 & 32& 32 & 0 &4 &\\
			$D_{2\cdot 6}$  &24 &68 &0 & 90 &12&\\
			$\Dic_{12}$  & 24&36 & 0& 42 &12&\\
			
			\hline
			&120&236  & 40 & 170 &40  & $\mathbf{606}$\\
		\end{tabular}
	\end{center}
	
	\item If $p\equiv 1 \pmod{12}$
	\begin{center}
		\begin{tabular}{r|c|c|c|c|c|c}
			&\, \, $C_{12}$ \, \,   &   $C_6\times C_2$   & \, \,  $A_4$ \, \,   & \, \, $D_{2\cdot 6}$\, \,  & \, $\Dic_{12}$ \, \, & \\
			\hline
			$C_{12}$   &94 & 54& 0 & 21 &17 &\\
			$C_6\times C_2$  &54 &46 &8 & 17 &9&\\
			$A_4$   &0 &32 & 32 & 0 &6 &\\
			$D_{2\cdot 6}$  & 36 & 68&0 & 90 &18&\\
			$\Dic_{12}$    &68  &36 &0 & 42 &34&\\
			
			\hline
			&252& 236 & 40 & 170 & 84 & $\mathbf{782}$\\
		\end{tabular}
	\end{center}
	
\end{itemize}

With the results summarized in the above tables, the validity of conjecture  \eqref{conjec} is then established.

\end{document}